\newtheorem{theorem}{Theorem}[section]
\newtheorem{lemma}[theorem]{Lemma}
\newtheorem{corollary}[theorem]{Corollary}
\newtheorem{proposition}[theorem]{Proposition}
\theoremstyle{definition}
\newtheorem{definition}[theorem]{Definition}
\numberwithin{equation}{section}
\newcommand\uprightlozenge[1][1]{\begin{tikzpicture}[scale=#1]
	\draw (0,0) --++(60:1) --++(-60:1) --++(-120:1) --++(120:1);
	\end{tikzpicture}}
\DeclareMathOperator{\sgn}{sgn}
\DeclareMathOperator{\id}{id}
\DeclareMathOperator{\E}{E}
\DeclareMathOperator{\fd}{\Delta}
\DeclareMathOperator{\bd}{\delta}
\DeclareMathOperator{\HMT}{HMT}
\DeclareMathOperator{\ct}{CT}
\DeclareMathOperator{\asym}{\mathbf{ASym}}
\DeclareMathOperator{\sym}{\mathbf{Sym}}
\DeclareMathOperator{\Strict}{Strict}
\DeclareMathOperator{\Qid}{{}^\textit{Q}id}
\DeclareMathOperator{\QE}{{}^\textit{Q}E}
\DeclareMathOperator{\Qfd}{{}^\textit{Q}\Delta}
\DeclareMathOperator{\QStrict}{{}^\textit{Q}Strict}
\DeclareMathOperator{\T}{{}^\textit{Q}\overline{Strict}}
\DeclareRobustCommand\bigop[1]{%
	\mathop{\vphantom{\sum}\mathpalette\bigop@{#1}}\slimits@
}
\newcommand{\bigop@}[2]{%
	\vcenter{%
		\sbox\z@{$#1\sum$}%
		\hbox{\resizebox{\ifx#1\displaystyle.9\fi\dimexpr\ht\z@+\dp\z@}{!}{$\m@th#2$}}%
	}%
}
\newcommand{\bbsum}{\DOTSB\bigop{\mathbb{\Sigma}}}
\newcommand{\Qsum}[2]{\sum\limits_{(l_1,\dots ,l_{#1})}^{(k_1,\dots ,k_{#2})}}
\newcommand{\AQsum}[2]{\bbsum\limits_{(l_1,\dots ,l_{#1})}^{(k_1,\dots ,k_{#2})}}
\newcommand{\AQbsum}[1]{\bbsum\limits_{(l_1,\dots ,l_{#1})}^{(k_1,\dots ,k_{#1},b)}}
\newcommand{\QHMT}{{}^Q\HMT}
\newcommand{\lle}{\rotatebox[origin=c]{45}{$\le$}}
\newcommand{\rle}{\rotatebox[origin=c]{-45}{$\le$}}
\newcounter{xhmt}
\newcounter{yhmt}
\newcommand\hmt[1]{
	\setcounter{yhmt}{-1}
	\foreach \p in {#1} {
		\addtocounter{yhmt}{1}
		\setcounter{xhmt}{\value{yhmt}-2}
		\foreach \q in \p {
			\addtocounter{xhmt}{2}
			\node at (\value{xhmt},\value{yhmt}) {\q};   
		}
	}
}
\begin{document}

\title[Refined Enumeration of Halved Monotone Triangles]{Refined Enumeration of Halved Monotone Triangles and Applications to Vertically Symmetric Alternating Sign Trapezoids}

%%%%%%%%%% AUTHOR INFORMATION %%%%%%%%%%
\author{Hans Höngesberg}
\address{Universität Wien, Fakultät für Mathematik, Oskar-Morgenstern-Platz 1, 1090 Wien, Austria}
\email{hans.hoengesberg@univie.ac.at}

%\keywords{Alternating sign matrix, alternating sign trapezoid, monotone triangle, operator formula, constant term formula, symplectic group character}

\date{}

\dedicatory{}

\thanks{Supported by the Austrian Science Fund FWF, START grant Y463 and SFB grant F50.}

\begin{abstract}
	Halved monotone triangles are a generalisation of vertically symmetric alternating sign matrices (VSASMs). We provide a weighted enumeration of halved monotone triangles with respect to a parameter which generalises the number of $-1$s in a VSASM. Among other things, this enables us to establish a generating function for vertically symmetric alternating sign trapezoids. Our results are mainly presented in terms of constant term expressions. For the proofs, we exploit Fischer's method of operator formulae as a key tool.
\end{abstract}

\maketitle

\section{Introduction}
	After Robbins and Rumsey had introduced alternating sign matrices \cite{RR86} and conjectured together with Mills an enumeration formula for alternating sign matrices of size $n \times n$ \cite[Conjecture 1]{MRR82}, it took more than ten years to prove this formula. Zeilberger presented the first proof \cite{Zei96a}; the key ingredients of his intricate proof are constant term identities with the aid of which he shows that alternating sign matrices are equinumerous with totally symmetric, self-complementary plane partitions. Shortly thereafter, Kuperberg provided a second, much shorter and compact proof \cite{Kup96} exploiting methods from statistical mechanics via the six-vertex model. The latter approach has become the prominent tool in studying alternating sign arrays, especially symmetry classes of alternating sign matrices as well as other interesting subclasses. The systematic study of symmetric alternating sign matrices, initiated by Stanley \cite{Rob91}, has become a fruitful but arduous task: Robbins initially conjectured a list of simple enumeration formulae \cite{Rob}, the last one thereof has only recently been solved \cite{BFK17}. 
	
	Alternating sign triangles are a newly introduced class of alternating sign arrays. Ayyer, Behrend, and Fischer have shown that alternating sign triangles with $n$ rows and $n \times n$ alternating sign matrices are equinumerous \cite{ABF}. Moreover, it is conjectured that even the generating functions of vertically symmetric alternating sign triangles and vertically symmetric alternating sign matrices with respect to the number of $-1$s are equal \cite[p.~33]{ABF}.
	
	Alternating sign triangles have been generalised to alternating sign trapezoids.
	The notion of $(n,l)$-alternating sign trapezoids with bases of odd length~$l$ has first been introduced by Ayyer \cite{Ayy} and by Aigner \cite{Aig17}; Behrend and Fischer \cite{BF} expanded the notion to include alternating sign trapezoids with bases of even length~$l$. In this way, alternating sign trapezoids generalise alternating sign triangles and quasi alternating sign triangles as defined in \cite{ABF} at the same time. It has been independently conjectured, first by Behrend and later by Aigner, that $(n,l)$-alternating sign trapezoids are equinumerous with column strict shifted plane partitions of class~$l-1$ with at most $n$ parts in the top row. This fact is shown by Fischer \cite{Fis19Trap} by means of operator formulae and constant term expressions. In addition, Behrend and Fischer will present a proof using the six-vertex model in a forthcoming paper \cite{BF}.
	
	The basic objects of our investigation are halved monotone triangles which are originally defined and enumerated in \cite{Fis09}. We introduce halved trees as a generalisation of halved monotone triangles. The purpose of this paper is to provide refined enumeration formulae for halved monotone triangles and halved trees in terms of operator formulae and constant term expressions to study vertically symmetric alternating sign trapezoids. In Section~\ref{sec:Preliminaries}, we provide the basic definitions and explain the correspondence between halved trees and alternating sign triangles and trapezoids. In Section~\ref{sec:EnumHMT}, we discuss the refined enumeration of halved monotone triangles and halved trees. Theorems~\ref{thm:QHTREEenumeration} and \ref{thm:QHTREEConstantTerm} state the main enumeration formulae for halved trees, which are applied to the case of alternating sign arrays in Section~\ref{sec:EnumAST}. Theorem~\ref{thm:VSASTPQEnumeration} and Corollary~\ref{thm:VSASTPQEnumerationOdd} establish generating functions of vertically symmetric alternating sign trapezoids. In Section~\ref{sec:Proofs}, we provide the proofs and some technical details. Finally, we consider the $2$-enumeration of halved monotone triangles in Appendix~\ref{sec:2enum} and make some remarks about the enumeration of the closely related halved Gelfand-Tsetlin patterns in Appendix~\ref{sec:HGTP}.
	
	Since we provide the first expression so far for the number of vertically symmetric alternating sign triangles, our results and especially Corollary~\ref{thm:VSASTPQEnumeration} are a step towards the proof of the conjecture that there are as many vertically symmetric alternating sign triangles with $n$ rows as vertically symmetric $n \times n$ alternating sign matrices.
	
\section{Preliminaries}
\label{sec:Preliminaries}
In this section, we define the basic objects of our study and explain the relation between vertically symmetric alternating sign trapezoids and halved trees. This correspondence is crucial for the enumerations in Section~\ref{sec:EnumAST}.
 
\begin{definition}
	\label{def:ASTrap}
For given integers~$n \geq 1$ and $l \geq 2$, an \emph{$(n,l)$-alternating sign trapezoid} is defined as an array of integers in a trapezoidal shape with entries~$-1$, $0$, or $+1$ arranged in $n$ centred rows of length~$2n+l-2, 2n+l-4,\dots,l+2$, and $l$ in the following way
\begin{equation*}
	\begin{array}[t]{ccccccccc}
		a_{1,1}&a_{1,2}&\cdots&\cdots&\cdots&\cdots&\cdots&\cdots&a_{1,2n+l-2}\\
		&a_{2,2}&\cdots&\cdots&\cdots&\cdots&\cdots&a_{2,2n+l-3}&\\
		&&\ddots&&&&\reflectbox{$\ddots$}&&\\
		&&&a_{n,n}&\cdots&a_{n,n+l-1}&&&
	\end{array}
\end{equation*}
such that
\begin{compactitem}
	\item the nonzero entries alternate in sign in each row and each column,
	\item the topmost nonzero entry in each column is $1$ (if it exists),
	\item the entries in each row sum up to $1$, and
	\item the entries in the central $l-2$ columns sum up to $0$.
\end{compactitem}
In the case of $l=1$, an \emph{$(n,1)$-alternating sign trapezoid} is defined as above with the exception that the entry in the bottom row can be $0$ or $1$.
\end{definition}

For instance, there are nine $(2,5)$-alternating sign trapezoids, which are provided in Table~\ref{fig:(2,5)-ASTrap}.
\begin{table}[htb]
	\centering
	\caption{List of all $(2,5)$-alternating sign trapezoids}
\begin{tabular}{ccccc}
\toprule
\begin{array}[t]{ccccccc}
$1$  &  $0$  &  $0$  &  $0$  &  $0$  &  $0$  &  $0$\\
     &  $1$  &  $0$  &  $0$  &  $0$  &  $0$  &
\end{array}

& &

\begin{array}[t]{ccccccc}
$0$  &  $0$  &  $0$  &  $0$  &  $0$  &  $1$  &  $0$\\
     &  $1$  &  $0$  &  $0$  &  $0$  &  $0$  &
\end{array}

& &

\begin{array}[t]{ccccccc}
$0$  &  $0$  &  $0$  &  $0$  &  $0$  &  $0$  &  $1$\\
     &  $1$  &  $0$  &  $0$  &  $0$  &  $0$  &
\end{array}

\\\midrule

\begin{array}[t]{ccccccc}
$1$  &  $0$  &  $0$  &  $0$  &  $0$  &  $0$  &  $0$\\
     &  $0$  &  $0$  &  $0$  &  $0$  &  $1$  &
\end{array}

& &

\begin{array}[t]{ccccccc}
$0$  &  $1$  &  $0$  &  $0$  &  $0$  &  $0$  &  $0$\\
     &  $0$  &  $0$  &  $0$  &  $0$  &  $1$  &
\end{array}

& &

\begin{array}[t]{ccccccc}
$0$  &  $0$  &  $0$  &  $0$  &  $0$  &  $0$  &  $1$\\
     &  $0$  &  $0$  &  $0$  &  $0$  &  $1$  &
\end{array}

\\\midrule

\begin{array}[t]{ccccccc}
$0$  &  $0$  &  $1$   &  $0$  &  $0$  &  $0$  &  $0$\\
     &  $1$  &  $-1$  &  $0$  &  $0$  &  $1$  &
\end{array}

& &

\begin{array}[t]{ccccccc}
$0$  &  $0$  &  $0$  &  $1$   &  $0$  &  $0$  &  $0$\\
     &  $1$  &  $0$  &  $-1$  &  $0$  &  $1$  &
\end{array}

& &

\begin{array}[t]{ccccccc}
$0$  &  $0$  &  $0$  &  $0$  &  $1$   &  $0$  &  $0$\\
     &  $1$  &  $0$  &  $0$  &  $-1$  &  $1$  &
\end{array}\\
\bottomrule
\end{tabular}
	\label{fig:(2,5)-ASTrap}
\end{table}

Note that alternating sign triangles of order~$n$ correspond to $(n-1,3)$-alternating sign trapezoids by deleting the bottom row and that quasi alternating sign triangles coincide with $(n,1)$-alternating sign trapezoids.

The entries in each column of an alternating sign trapezoid sum up to $0$ or $1$ by the first and the second condition in the Definition~\ref{def:ASTrap}. If the column sum is $1$, we call this column a \emph{$1$-column}, otherwise a \emph{$0$-column}. Among the $1$-columns we distinguish the \emph{$10$-columns} from the \emph{$11$-columns} depending on whether the bottom entry is $0$ or $1$, respectively.

We focus on \emph{vertically symmetric alternating sign trapezoids}. These are alternating sign trapezoids which stay invariant under reflection along a vertical axis of symmetry. Since the nonzero entries in each row alternate in sign and sum up to $1$, there is exactly one $1$ and no other nonzero entry in the top row by the second condition. Therefore, due to symmetry, there exists a central column. Hence, $l$ has to be odd. Figure~\ref{fig:VSASM} shows an example of a vertically symmetric $(6,9)$-alternating sign trapezoid.

\begin{figure}[htb]
	\centering
\begin{equation*}
	\begin{array}[t]{ccccccccccccccccccc}
		0&0&0&0&0&0&0&0&0&1&0&0&0&0&0&0&0&0&0\\
		&0&0&0&0&0&0&0&1&-1&1&0&0&0&0&0&0&0&\\
		&&0&1&0&0&0&0&-1&1&-1&0&0&0&0&1&0&&\\
		&&&0&0&0&1&0&0&-1&0&0&1&0&0&0&&&\\
		&&&&1&0&-1&0&0&1&0&0&-1&0&1&&&&\\
		&&&&&1&0&0&0&-1&0&0&0&1&&&&&\\
	\end{array}
\end{equation*}
\caption{Vertically symmetric $(6,9)$-alternating sign trapezoid}
\label{fig:VSASM}
\end{figure}

If we number the $n$ leftmost columns of an ($n$,$l$)-alternating sign trapezoid from $-n$ to $-1$, we can associate the \emph{$1$-column vector} $\mathbf{c}=\left(c_1,\dots,c_m\right)$ with $-n \le c_1 < \dots < c_m \le -1$ according to the positions of the $1$-columns within the $n$ leftmost columns of the alternating sign trapezoid. In the case of vertical symmetry, they are equally distributed on both sides.

Assume that $l \ge 3$. Since the entries of each of the $n$ rows sum up to $1$, there are exactly $n$ columns with column sum $1$. There is no zero in the central column due to symmetry and since the nonzero entries alternate in sign, the central column is $(1,-1,1,\dots,(-1)^{n-1})^\top$. The sum of these entries must be $0$. Hence, $n$ has to be even and $m=n/2$.

If $l=1$, we have to consider the parity of $n$. If $n$ is even, the vertical axis of symmetry is the central column $(1,-1,1,\dots,1,0)^\top$. Thus, there are exactly $n-1$ rows with row sum~$1$ and consequently as many $1$-columns. One of those is the middle column, the remaining $n-2$ are equally distributed on both sides. If $n$ is odd, the axis of symmetry is either $(1,-1,1,\dots,1,-1,1)^\top$ or $(1,-1,1,\dots,1,-1,0)^\top$. In this case, that is, $l = 1$ and $n$ odd, the two subsets of vertically symmetric $(n,1)$-alternating sign trapezoids with bottom entry prescribed to be $0$ and $1$, respectively, are each in bijection with the set of vertically symmetric $(n-1,3)$-alternating sign
trapezoids, by deletion of the bottom entry.

The vertically symmetric $(6,9)$-alternating sign trapezoid given in Figure~\ref{fig:VSASM} has two $10$-columns, four $11$-columns and $1$-column vector~$\mathbf{c}=(-3,-2,-1)$. The central column is given by $(1,-1,1,-1,1,-1)^\top$.

\begin{definition}
We define two weights on vertically symmetric $(n,l)$-alternating sign trapezoids: the \emph{$Q$-weight} is $Q$ raised to the number of $-1$s in the $n-1+(l-1)/2$ leftmost columns; the \emph{$P$-weight} is $P$ raised to the number of $10$-columns within the $n-1$ leftmost columns.
\end{definition}

In order to enumerate vertically symmetric alternating sign trapezoids, we transform them into truncated halved monotone triangles. 

\begin{definition}
For a given integer $n \geq 1$, a \emph{halved monotone triangle of order~$n$} is a triangular array of integers with $n$ rows of one of the following shapes depending on the parity of $n$:
	
	\begin{center}
	\begin{minipage}{.95\textwidth}
		\begin{minipage}{.5\textwidth}
			\begin{tikzpicture}[yscale=.5]
			\hmt{{$a_{n,1}$,$\dots$,$\dots$,$a_{n,\frac{n+1}{2}}$},
				{$a_{n-1,1}$,$\dots$,$a_{n-1,\frac{n-1}{2}}$},
				{$\iddots$,$\iddots$,$\vdots$},
				{$a_{4,1}$,$a_{4,2}$},
				{$a_{3,1}$,$a_{3,2}$},
				{$a_{2,1}$},
				{$a_{1,1}$}}
			\node at (3,-1) {\text{odd $n$}};
			\end{tikzpicture}
		\end{minipage}\hfill
		\begin{minipage}{.45\textwidth}
			\begin{tikzpicture}[yscale=.5]
			\hmt{{$a_{n,1}$,$\dots$,$a_{n,\frac{n}{2}}$},
				{$a_{n-1,1}$,$\dots$,$a_{n-1,\frac{n}{2}}$},
				{$\iddots$,$\iddots$},
				{$a_{3,1}$,$a_{3,2}$},
				{$a_{2,1}$},
				{$a_{1,1}$}}
			\node at (3,-1) {\text{even $n$}};
			\node at (6,7) {};
			\end{tikzpicture}
		\end{minipage}
	\end{minipage}
	\end{center}
	
The entries
	\begin{compactitem}
		\item 	strictly increase along rows and
		\item 	weakly increase along $\nearrow$-diagonals and $\searrow$-diagonals. 
	\end{compactitem}
\end{definition}

Halved monotone triangles are a generalisation of \emph{vertically symmetric alternating sign matrices}. An \emph{alternating sign matrix} is a square matrix with entries $-1$, $0$, or $+1$ such that the sum of entries in each row and column is $1$, and the nonzero entries alternate in sign along each row and column. If $n$ is odd, there is a bijective correspondence between vertically symmetric alternating sign matrices of size $n \times n$ and halved monotone triangles of order $n-1$ with bottom row $(1,2,\dots,(n-1)/2)$ and no entry larger than $(n-1)/2$. Consider a vertically symmetric alternating sign matrix of size $n \times n$, that is, an $n \times n$ alternating sign matrix that stays invariant under reflection along a vertical axis. By a similar reasoning to that in the case of vertically symmetric alternating sign trapezoids, the vertical axis can only be the middle column $(1,-1,1,\dots,1,-1,1)^\top$. Hence, $n$ has to be odd.

\begin{figure}[htb]
	\centering
	\begin{equation*}
	\begin{pmatrix}
	0 & 0 & 1 & 0 & 0 \\
	1 & 0 & -1 & 0 & 1 \\
	0 & 0 & 1 & 0 & 0 \\
	0 & 1 & -1 & 1 & 0 \\
	0 & 0 & 1 & 0 & 0 
	\end{pmatrix}
	\quad\longleftrightarrow\quad
	\begin{pmatrix}
		0 & 0 & {\color{gray}1} & {\color{gray}0} & {\color{gray}0} \\
		1 & 0 & {\color{gray}0} & {\color{gray}0} & {\color{gray}1} \\
		1 & 0 & {\color{gray}1} & {\color{gray}0} & {\color{gray}1} \\
		1 & 1 & {\color{gray}0} & {\color{gray}1} & {\color{gray}1} \\
		1 & 1 & {\color{gray}1} & {\color{gray}1} & {\color{gray}1} 
	\end{pmatrix}
	\quad\longleftrightarrow\quad
	\begin{array}{cccc}
	&&&1\\
	&&1&\\
	&1&&2\\
	1&&2&
	\end{array}
	\end{equation*}
	\caption{Vertically symmetric alternating sign matrix and the corresponding halved monotone triangle}
	\label{figure:VSASMHMT}
\end{figure}

To each entry of the vertically symmetric alternating sign matrix, we add the entries in the same column above. Thus, we obtain a matrix of $0$s and $1$s. Due to symmetry, we ignore the $(n+1)/2$ rightmost columns, and we record row by row the positions of the $1$s in the remaining columns in the shape of a halved monotone triangle. The resulting halved monotone triangle is of order $n-1$, has bottom row $(1,2,\dots,(n-1)/2)$, and no entry is larger than $(n-1)/2$. An illustrative example is given in Figure~\ref{figure:VSASMHMT}. By considering halved monotone triangles of even and odd order with more general bottom rows, we generalise the notion of vertically symmetric alternating sign matrices.

\begin{definition}
For given integers $m \geq 0$ and $n \geq 1$ with $\lceil n/2 \rceil \geq m$ as well as a weakly decreasing sequence $\mathbf{s}=\left(s_1,s_2,\dots,s_m\right)$ of nonnegative integers, we define a \emph{halved $\mathbf{s}$-tree} as an array of integers which arises from a halved monotone triangle of order~$n$ by truncating the diagonals: for each $1 \leq i \leq m$ we delete the $s_i$ bottom entries of the $i^{\text{th}}$ $\nearrow$-diagonal counted from the left.

We say that a halved $\mathbf{s}$-tree has bottom row~$\mathbf{k}=\left(k_1, \dots, k_{\lceil n/2 \rceil}\right)$ if for all $1 \leq i \leq \lceil n/2 \rceil$ the bottom entry in the $i^{\text{th}}$ $\nearrow$-diagonal counted from left to right is $k_i$.
\end{definition}

Figure~\ref{fig:ShapeTree} illustrates the shape of a halved $(7,3,1)$-tree with $9$ rows. Its bottom row is $(a_{2,1},a_{6,2},a_{8,3},\allowbreak a_{9,4},\allowbreak a_{9,5})$.

\begin{figure}[htb]
	\centering
		\begin{tikzpicture}[yscale=.5,xscale=.8]
		\begin{scope}
		\clip(4,-1) rectangle (9,9);
		\hmt{{,,,$a_{9,4}$,$a_{9,5}$},
			{,,$a_{8,3}$,$a_{8,4}$},
			{,,$a_{7,3}$,$a_{7,4}$},
			{,$a_{6,2}$,$a_{6,3}$},
			{,$a_{5,2}$,$a_{5,3}$},
			{,$a_{4,2}$},
			{,$a_{3,2}$},
			{$a_{2,1}$},
			{$a_{1,1}$}}
		\end{scope}
		\end{tikzpicture}
\caption{Shape of a halved $(7,3,1)$-tree with $9$ rows}
\label{fig:ShapeTree}
\end{figure}

Since trailing zeros in the sequence $\mathbf{s}$ do not affect the shape of the halved $\mathbf{s}$-tree, we set $s_{m+1},\dots,s_{\lceil n/2 \rceil}$ to be $0$ in formulae such as those in Theorems~\ref{thm:QHTREEenumeration}, \ref{thm:QHTREEConstantTerm}, and \ref{thm:HMTPQEnumeration} if $m < \lceil n/2 \rceil  $. Notice that halved trees are a generalisation of halved monotone triangles since the latter can be seen as halved $(0,\dots,0)$-trees. 

\begin{definition}
We call an entry $a_{i,j}$ of a halved tree \emph{special} if the entry $a_{i+1,j}$ exists and
\[
\begin{cases}
a_{i+1,j} < a_{i,j} < a_{i+1,j+1} & \text{if $a_{i+1,j+1}$ exists,}\\
a_{i+1,j} < a_{i,j} & \text{otherwise.}
\end{cases}
\]

As in the case of vertically symmetric alternating sign trapezoids, we define two weights on halved trees: The \emph{$Q$-weight} of a halved tree is defined as $Q$ raised to the number of special entries. It essentially counts the entries that lie strictly between the neighbouring entries in the row below. The \emph{$P$-weight} of a halved tree is $P$ raised to the number of $\nearrow$-diagonals such that the two bottommost entries are equal. See Figure~\ref{figure:vsastrapezoid} for an example.
\end{definition}	
	
\begin{figure}[htb]
	\centering
		\begin{equation*}
		\begin{array}{ccccccccccccccccccc}
		0&0&0&0&0&0&0&0&0&1&0&0&0&0&0&0&0&0&0\\
		&0&0&0&0&0&0&0&1&-1&1&0&0&0&0&0&0&0&\\
		&&0&1&0&0&0&0&-1&1&-1&0&0&0&0&1&0&&\\
		&&&0&0&0&1&0&0&-1&0&0&1&0&0&0&&&\\
		&&&&1&0&-1&0&0&1&0&0&-1&0&1&&&&\\
		&&&&&1&0&0&0&-1&0&0&0&1&&&&&\\
		\end{array}
		\quad\longleftrightarrow\quad
		\begin{array}{ccc}
		&&\\
		&&2\\
		&-3&\\
		-3&&0\\
		&-2&\\
		&&-1
		\end{array}
		\end{equation*}
		\caption{Vertically symmetric $(6,9)$-alternating sign trapezoid with $1$-column vector $\mathbf{c}~=~(-3,-2,-1)$ and weight~$P Q^2$ and its corresponding halved $(2,1)$-tree with bottom row~$(-3,-2,-1)$ and weight~$P Q^2$}
		\label{figure:vsastrapezoid}
\end{figure}

There is a correspondence between vertically symmetric alternating sign trapezoids and certain halved trees. This is a variant of a bijection presented by Fischer \cite{Fis19Trap}. To begin with, we assume that $n$ is even. As stated before, $l$ has to be odd; otherwise, an $(n,l)$-alternating sign trapezoid cannot be vertically symmetric. We demonstrate the modified construction with the example from Figure~\ref{figure:vsastrapezoid}. Given a vertically symmetric ($n$,$l$)-alternating sign trapezoid with $1$-column vector~$\mathbf{c}=(c_1,\dots,c_{n/2})$, we delete all the entries to the right of the vertical symmetry axis and add $0$s on the left to complete the array to an $n \times \left(n + (l-1)/2 \right)$-rectangular shape. We number the columns from $-n$ to $(l-3)/2$ from left to right. Then we replace every entry by the partial column sum of all the entries that lie above it in the same column including the entry itself. This yields an array of integers consisting of $0$s and $1$s. In our example, we get the following array:
\begin{equation*}
	\begin{array}[t]{cccccccccc}
		0&0&0&0&0&0&0&0&0&1\\
		{\color{gray}0}&0&0&0&0&0&0&0&1&0\\
		{\color{gray}0}&{\color{gray}0}&0&1&0&0&0&0&0&1\\
		{\color{gray}0}&{\color{gray}0}&{\color{gray}0}&1&0&0&1&0&0&0\\
		{\color{gray}0}&{\color{gray}0}&{\color{gray}0}&{\color{gray}1}&1&0&0&0&0&1\\
		{\color{gray}0}&{\color{gray}0}&{\color{gray}0}&{\color{gray}1}&{\color{gray}1}&1&0&0&0&0
	\end{array}
\end{equation*}

We record the positions of all $1$s. We proceed row by row beginning at the top and record the column of each nonzero entry. The first row of an alternating sign trapezoid consists of exactly one $1$; in the case of vertical symmetry at position $(l-3)/2$. Copy down this position in the top row of the shape of a halved tree with $n$ rows. Record the positions of the $1$s row by row. Thus, our example turns into the following halved monotone triangle:
\begin{equation*}
	\begin{array}[t]{cccccc}
		&&&&&3\\
		&&&&2&\\
		&&&-3&&3\\
		&&-3&&0&\\
		&{\color{gray}-3}&&-2&&3\\
		{\color{gray}-3}&&{\color{gray}-2}&&-1&
	\end{array}
\end{equation*}

Note that an entry $-1$ in the first $n+(l-3)/2$ columns of the alternating sign trapezoid corresponds to exactly one entry in the halved tree that lies strictly between the neighbouring entries in the row below. The $-1$s in the middle column of the alternating sign trapezoid correspond to the entries $(l-3)/2$ in the right column of the halved tree; they are strictly larger than the left neighbouring entry in the row below. We delete the rightmost column and the entries that originate from the additionally added $0$s at the beginning:
\begin{equation*}
	\begin{array}[t]{ccc}
		&&2\\
		&-3&\\
		-3&&0\\
		&-2&\\
		&&-1
	\end{array}
\end{equation*}

In total, we obtain a halved $(-c_1-1,\dots,-c_{n/2}-1)$-tree with $n-1$ rows and bottom row~$(c_1,\dots,c_{n/2})$ without entries larger than $(l-5)/2$ whose $Q$-weight coincide with the $Q$-weight of the corresponding alternating sign trapezoid.

The following observation is crucial: The two bottommost entries of the $i^{\text{th}}$ $\nearrow$-diagonal of the halved tree -- counted from the left -- are identically $c_i$ if and only if the column $c_i < 0$ of the corresponding alternating sign trapezoid is a $10$-column. Consequently, the $P$-weights of the halved tree and the alternating sign trapezoid coincide, too, and we have the following result:

\begin{proposition}
	\label{pro:Bijection}
	Let $n$ be even. There is a $P$- and $Q$-weight preserving bijective correspondence between vertically symmetric $(n,l)$-alternating sign trapezoids with $1$-column vector~$\mathbf{c}=(c_1,\dots,c_{n/2})$ and halved $(-c_1-1,\dots,-c_{n/2}-1)$-trees with $n-1$ rows and bottom row~$(c_1,\dots,c_{n/2})$ without entries larger than $(l-5)/2$.
\end{proposition}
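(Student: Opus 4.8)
The plan is to verify that the construction sketched just before the statement is a well-defined bijection, by checking that each step is reversible and that the promised constraints hold. First I would formalise the forward map: given a vertically symmetric $(n,l)$-alternating sign trapezoid $A$ with $n$ even and $1$-column vector $\mathbf{c}=(c_1,\dots,c_{n/2})$, one keeps the columns $-n,\dots,(l-3)/2$ (those weakly left of the symmetry axis), pads with $0$s on the upper-left to obtain an $n\times(n+(l-1)/2)$ rectangle, forms partial column sums from the top, records the positions of the $1$s row by row, then deletes the rightmost column (index $(l-3)/2$) and the $\nearrow$-diagonals coming from the padding $0$s. I would then check three things in turn: (i) the resulting array of $0$s and $1$s has exactly one $1$ in each row weakly left of the axis — this follows because the row sums of $A$ are $1$ and, by vertical symmetry together with the topmost-nonzero-is-$1$ condition, the running column sums are $0/1$-valued and each row of $A$ left of the axis contributes exactly one ``rise'' except possibly at the axis column; (ii) the recorded positions form a halved monotone triangle of order $n-1$, i.e. strict increase along rows and weak increase along both diagonals — strictness along rows is immediate since the $1$s occupy distinct columns, and the diagonal monotonicity is the standard fact that the $1$-positions of consecutive partial-column-sum rows interlace; (iii) the diagonal-truncation data is exactly $\mathbf{s}=(-c_1-1,\dots,-c_{n/2}-1)$ and the bottom row is $(c_1,\dots,c_{n/2})$ — the bottom row entries are the positions of the $1$s in the last processed row, which by the definition of $\mathbf{c}$ are the $c_i$, and the number of padding zeros deleted from the $i^{\text{th}}$ diagonal is $-c_i-1$ because column $c_i$ is the first (leftmost) column in which a $1$ appears, at row index $-c_i$.

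Next I would construct the inverse map. Given a halved $(-c_1-1,\dots,-c_{n/2}-1)$-tree $T$ with $n-1$ rows, bottom row $(c_1,\dots,c_{n/2})$, and all entries $\le (l-5)/2$, one first re-attaches the truncated diagonal bottoms (forced to be the values $c_i$ repeated, extended upward) and a rightmost column of entries $(l-3)/2$ to recover a halved monotone triangle of order $n$ on columns $-n,\dots,(l-3)/2$; this re-attachment is uniquely determined by the monotonicity constraints and the prescribed $\mathbf{s}$, $\mathbf{k}$. One then reads the triangle as a $0/1$ array (a $1$ in row $i$, column $j$ iff $j$ is an entry of row $i$), takes successive differences down each column to recover an array with entries in $\{-1,0,1\}$, reflects across the symmetry axis to fill in the right half, and deletes the padding rows/columns. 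I would check that the output satisfies all four conditions of Definition~\ref{def:ASTrap}: sign-alternation in columns is exactly the statement that each column of the $0/1$ array is weakly increasing then the difference pattern is $+1,\dots,-1,\dots$ — which holds because the $1$-positions in a column of a monotone triangle read top-to-bottom form an increasing then the set grows by interlacing; sign-alternation in rows and the row-sum-$1$ condition follow from the strict increase along rows of $T$; the topmost nonzero in each column being $+1$ is automatic from taking partial sums from the top; and the central $l-2$ columns summing to $0$ is forced by the symmetry and the fact that the axis column is $(1,-1,\dots,-1,1)^\top$ with $n$ even, together with the bound $a_{i,j}\le(l-5)/2$ ensuring nothing spills past the allowed columns. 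That the two maps are mutually inverse is then a matter of tracking that partial sums and successive differences undo each other and that padding/deletion are inverse operations.

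Finally I would address the weight preservation, which is really two separate bookkeeping observations already flagged in the text. For the $Q$-weight: a $-1$ in the first $n+(l-3)/2$ columns of $A$ occurs precisely where a column of the $0/1$ array drops from $1$ to $0$, which corresponds bijectively to an entry of the monotone triangle that is strictly below its upper neighbour, i.e. a special entry of $T$ in the sense of the relevant definition; the $-1$s in the central column correspond to the repeated entries $(l-3)/2$ in the deleted rightmost column and, since those entries exceed their left neighbour in the row below, they too are special — but these get deleted along with that column, so one must check the count matches the $Q$-weight on the $n-1+(l-1)/2$ leftmost columns of $A$ exactly. For the $P$-weight: column $c_i<0$ of $A$ is a $10$-column iff its bottom two entries (rows $n-1$ and $n$) of the $0/1$ array are $1,1$, i.e. the difference is $0$, which translates to the bottom two entries of the $i^{\text{th}}$ $\nearrow$-diagonal of $T$ being equal — exactly the condition counted by the $P$-weight of $T$. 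The main obstacle I anticipate is the careful edge-case analysis at the symmetry axis and at the diagonal truncations: making sure the padding zeros, the forced axis column $(l-3)/2$-entries, and the bound $a_{i,j}\le(l-5)/2$ interact correctly so that the reconstructed trapezoid genuinely has base length $l$ and the central-column-sum condition holds, and that the $Q$-weight accounting correctly includes exactly the contributions from the columns indexed $-n+1,\dots,(l-3)/2$ while the map only ``sees'' columns $\le(l-5)/2$ in $T$. This is routine but delicate, and is where I would spend most of the write-up.
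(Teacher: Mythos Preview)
Your proposal follows exactly the construction the paper presents in the passage immediately preceding the statement; the paper itself offers no formal proof beyond the worked example and the remark that this is a variant of Fischer's bijection, so in that sense you are filling in what the paper leaves implicit.

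That said, two of your detailed justifications are wrong and would not survive a careful write-up. First, point~(i) is false: the partial-column-sum $0/1$ array does \emph{not} have exactly one $1$ per row. Row~$i$ carries $\lceil i/2\rceil$ ones, and it is precisely this increasing count that produces the halved-monotone-triangle shape. (In the paper's own example, row~$3$ of the $0/1$ array already has two $1$s.) The row-sum-$1$ condition on $A$ only says that each row of $A$ changes the \emph{total} number of $1$s in the full partial-sum array by one; symmetry together with the axis-column pattern $(1,-1,1,\dots)^\top$ is what forces the count on the left half to grow by one every two rows. Second, your argument for column sign-alternation in the inverse direction (``each column of the $0/1$ array is weakly increasing'') is also false: those columns oscillate between $0$ and $1$, and sign-alternation in $A$ is exactly the statement that successive differences of a $\{0,1\}$-valued sequence lie in $\{-1,0,1\}$ with the first nonzero difference being $+1$. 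These are fixable bookkeeping slips rather than a gap in strategy, but they sit squarely inside the ``routine but delicate'' edge cases you flag at the end, so get them right before you write anything up.
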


If $n$ is odd and, thus, $l=1$, the resulting halved trees have different properties than in the demonstrated case of even~$n$. In order to avoid a distinction of cases, we can use the previously made observation that deleting the bottom entry of a vertically symmetric $(n,1)$-alternating sign trapezoid, for $n$ odd, gives a vertically symmetric $(n-1,3)$-alternating sign trapezoid. Hence, the enumeration of vertically symmetric alternating sign trapezoids with an odd number
of rows can be reduced to the enumeration of those with an even number of rows.

\section{Weighted Enumeration of Halved Monotone Triangles and Trees}
\label{sec:EnumHMT}
Halved monotone triangles and trees can be enumerated by so-called operator formulae. This method of enumeration has been initiated by Fischer \cite{Fis06} and developed in a series of follow-up papers \cite{Fis09,Fis10,Fis11,Fis16,Fis18}. We define the following operators: the \emph{identity operator} $\id_x \left[f(x)\right] \coloneqq f(x)$, the \emph{shift operator} $\E_x \left[f(x)\right] \coloneqq f(x+1)$, the \emph{forward difference} $\fd_x \coloneqq \E_x-\id_x$, and the generalised \emph{$Q$-forward difference} $\Qfd_x \coloneqq \fd_x (\id_x - (1-Q)\E_x)^{-1}$. Given a variable~$x$ and an integer~$a$, we use the notation $\E_a \left[f(a)\right] \coloneqq \left. \E_x \left[f(x)\right]\right|_{x=a}$ and similarly for other operator expressions. Note that for a function of several variables, any of these operators which is associated with a variable $x$ commutes with any of these operators which is associated with a variable $y$. For the sake of convenience, we omit the variable the identity operator $\id_x$ refers to and write $\id$ instead throughout the paper. Also note that the $Q$-forward difference $\Qfd_x$ reduces to the forward difference $\fd_x$ if we set $Q=1$. We define more operators in Section~\ref{sec:Proofs}. Table~\ref{tab:Op} summarises all the operators we use in this paper.

The following theorem is our main result on the refined enumeration of halved monotone triangles. It provides an operator formula for the sum of the $Q$-weights of all halved monotone triangles of order~$n$ with prescribed bottom row~$\mathbf{k}$ and no entry larger than $b$, which we denote by $\QHMT_n\left(b;\mathbf{k}\right)$. Theorem~\ref{thm:QHMTenumeration} generalises the straight enumeration \cite[Theorem 1]{Fis09}, which can be recovered by setting $Q=1$.
\begin{theorem}\label{thm:QHMTenumeration}
	The $Q$-generating function $\QHMT_n\left(b;\mathbf{k}\right)$ of halved monotone triangles of order~$n$ with prescribed bottom row~$\mathbf{k}=(k_1,\dots,k_{\lceil n/2 \rceil})$ and no entry larger than $b$ is given by
	\begin{multline*}
		\prod_{1\leq s<t\leq \frac{n+1}{2}} \left(\E_{k_s}+\E_{k_t}^{-1}-(2-Q)\E_{k_s}\E_{k_t}^{-1}\right) \left(\E_{k_s}+\E_{k_t}-(2-Q)\E_{k_s}\E_{k_t}\right) \\
		\left[\prod_{1\leq i<j\leq \frac{n+1}{2}} \frac{(k_j-k_i+j-i)(2b+n+2-k_i-k_j-i-j)}{(j-i)(i+j-1)}\right]
	\end{multline*}
	if $n\in\mathbb{N}$ is odd and by
	\begin{multline*}
		\prod_{r=1}^{\frac{n}{2}} \left((Q-1) \E_{k_r}+\id\right) \prod_{1\leq s<t\leq \frac{n}{2}} \left(\E_{k_s}+\E_{k_t}^{-1}-(2-Q)\E_{k_s}\E_{k_t}^{-1}\right) \left(\E_{k_s}+\E_{k_t}-(2-Q)\E_{k_s}\E_{k_t}\right) \\
		\left[\prod_{1\leq i<j\leq \frac{n}{2}}\frac{(k_j-k_i+j-i)(2b+n+2-k_i-k_j-i-j)}{(j-i)(i+j)} \prod_{i=1}^{\frac{n}{2}}\frac{b+\frac{n}{2}+1-k_i-i}{i}\right]
	\end{multline*}
	if $n\in\mathbb{N}$ is even.
\end{theorem}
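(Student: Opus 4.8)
The plan is to prove the formula by induction on $n$ using Fischer's operator method, peeling rows off the bottom of a halved monotone triangle. The base cases are immediate: for $n=1$ both sides equal $1$, and for $n=2$, summing over the unique free entry $a_{1,1}\in\{k_1,\dots,b\}$ with an extra factor $Q$ whenever $a_{1,1}>k_1$ gives $1+Q(b-k_1)=\left((Q-1)\E_{k_1}+\id\right)\left[b+1-k_1\right]$, which is exactly the stated expression. For the inductive step, removing the bottom row (row $n$) of a halved monotone triangle of order $n$ with bottom row $\mathbf{k}$ leaves a halved monotone triangle of order $n-1$; by the $\nearrow$- and $\searrow$-diagonal inequalities the entries of the new bottom row range over a Gelfand-Tsetlin-type interlacing pattern with $\mathbf{k}$ — two-sided ranges $[k_j,k_{j+1}]$ in the bulk, plus a single one-sided range $[k_{n/2},b]$ at the right edge when $n$ is even — while the $Q$-weight picks up exactly one factor $Q$ for each entry lying strictly between (strictly above, at the right edge) its lower neighbour(s). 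This yields the recursion
\[
\QHMT_n(b;\mathbf{k})=\sum_{\mathbf{l}}Q^{\#\{\text{special entries of the new bottom row}\}}\,\QHMT_{n-1}(b;\mathbf{l}),
\]
the sum being over all admissible $\mathbf{l}$. (One could equally peel two rows at a time, from order $n$ to order $n-2$, to keep the two shape types uniform, at the cost of an extra intermediate summation.)

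The heart of the argument is converting this weighted summation into shift operators. The relevant summation lemma states that, for a polynomial $h$ and integers $a\le c$, the sum $\sum_{l=a}^{c}h(l)$ with an extra factor $Q$ on each interior term — and its one-sided variant — is implemented by a rational operator in $\E_a$ and $\E_c$ applied to $h$; this is where the $Q$-forward difference enters, since in $\Qfd=\fd(\id-(1-Q)\E)^{-1}$ the factor $(\id-(1-Q)\E)^{-1}$ is precisely what resums the geometric contribution of the $Q$-weighted interior of a summation range. Substituting the operator form of $\QHMT_{n-1}(b;\cdot)$, so that its operators act on the summation variables, and then commuting the weighted summations past them — the standard bookkeeping of Fischer's method, in which shift operators in distinct variables commute and the rational, non-polynomial contributions of the summation operators cancel out — turns the composition into the product of pair-operators $\left(\E_{k_s}+\E_{k_t}^{-1}-(2-Q)\E_{k_s}\E_{k_t}^{-1}\right)\left(\E_{k_s}+\E_{k_t}-(2-Q)\E_{k_s}\E_{k_t}\right)$. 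Heuristically, the two factors per pair record the fact that a halved monotone triangle unfolds to a vertically symmetric monotone triangle, in which the bottom entry $k_t$ is shadowed by its mirror image across the axis with $\E_{k_t}^{-1}$ in the role of the shift of that shadow; when $n$ is even the symmetric triangle has no central column, and the self-pairings $(k_r,-k_r)$ supply the additional prefactor $\prod_r\left((Q-1)\E_{k_r}+\id\right)$.

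It then remains to check that the polynomial in square brackets for order $n$ is carried to the corresponding polynomial for order $n+1$ by this procedure. This is a finite manipulation of products of linear forms in $k_1,\dots,k_{\lceil n/2\rceil}$ and $b$: under the substitution $x_i=k_i+i$ the seed is, up to the explicit normalisation $\frac{1}{(j-i)(i+j-1)}$ (respectively $\frac{1}{(j-i)(i+j)}\cdot\frac1i$), a Vandermonde/Weyl-denominator-type antisymmetric product $\prod_{i<j}(x_j-x_i)\bigl(2b+n+2-x_i-x_j\bigr)$ (with an extra factor $\prod_i\bigl(b+\tfrac n2+1-x_i\bigr)$ when $n$ is even), so that the antisymmetrisation built into the operator product automatically produces the increase of the number of bottom entries together with the correct rescaling of the denominators and the shifts in $n$ and $b$.

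The genuinely delicate point is the two preceding steps in combination: showing that the composition of the weighted-summation operator with the order-$(n-1)$ formula collapses \emph{exactly} to the order-$n$ product, with $(2-Q)$ — and not $2$ or $Q$ — in the pair-operators and with the precise even-$n$ prefactor. This demands careful tracking of the number of special entries through the peeling and tight control of the boundary behaviour: at the left edge, governed by the bound $b$ through the factor $2b+n+2-k_i-k_j-i-j$, and at the symmetry axis, through the odd/even dichotomy, where one verifies separately that an even step creates the prefactor $\prod_r\left((Q-1)\E_{k_r}+\id\right)$ while an odd step removes it. A more robust organisation — in the spirit of \cite{Fis09} and its sequels — is not to carry the literal identity through the induction but to show that both sides obey the same recursion together with enough normalisation and degree conditions to be uniquely determined; one may also need to strengthen the inductive hypothesis to cover the shifted arguments that arise in the intermediate computations so that the induction closes.
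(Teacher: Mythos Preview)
Your overall strategy---induction on $n$, peeling off the bottom row, and expressing the resulting $Q$-weighted interlacing sum as an operator---is the same as the paper's. The base cases and the recursion are stated correctly. But two concrete ingredients are missing, and without them the induction does not close.

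First, the mechanism you invoke for the $Q$-weighted sum is not the right one. The sum $\sum_{l=a}^{c}Q^{[a<l<c]}h(l)$ is not a geometric resummation and is not produced by $\Qfd$; it is implemented by the \emph{$Q$-strict operator} $\QStrict_{x,y}=\E_x^{-1}+\E_y-(2-Q)\E_x^{-1}\E_y$ acting on the two endpoints (the paper's identity~(5.3)), together with its one-sided reduction $\Qid$. The factor $(2-Q)$ in the pair operators of the theorem comes directly from $\QStrict$ and its variant $\T_{x,y}=\E_x+\E_y-(2-Q)\E_x\E_y$, not from any expansion of $(\id-(1-Q)\E)^{-1}$. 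With this in hand one needs a summation lemma of Fischer type (Lemmata~5.3 and~5.4 in the paper): if $g$ is a polynomial with $\QStrict_{l_i,l_{i+1}}[g]\big|_{l_i=l_{i+1}+1}=0$, then the $Q$-weighted interlacing sum of $\prod_i\fd_{l_i}[g]$ collapses to an alternating sum of $\Qid$/$\QE$ shifts of $g$ with one argument deleted. Your ``commuting the summations past the operators'' paragraph gestures at this but never states the hypothesis on $g$ or the conclusion, and the determinantal rewriting of the seed (Lemma~5.1, equations~(5.1)--(5.2)) that makes the hypothesis checkable is absent.

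Second, and more seriously, the even-to-odd step has a boundary term that must be shown to vanish. When $n$ is even, the rightmost entry of the penultimate row is summed up to $b$; applying the summation lemma produces, besides the desired expression, a term $g(b;k_1,\dots,\widehat{k_r},\dots,k_{n-1},b+1)$. The paper disposes of it by proving $g(b;l_1,\dots,l_n)\big|_{l_n=b+1}=0$ via the elementary identity $\sum_{r=0}^{s}\binom{s}{r}(s-2r)^{2t+1}=0$ (equations~(5.9)--(5.11)). This is the step that actually ties the operator formula to the specific polynomial seed and to the bound $b$; your proposal acknowledges a ``genuinely delicate point'' at the boundary but neither isolates this term nor shows why it vanishes. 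Until that vanishing is established, the inductive step for even $n$ is incomplete.
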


Halved trees are our main tool for enumerating vertically symmetric alternating sign trapezoids. They can be enumerated by an operator formula as shown in the following theorem. We obtain the generating function of halved trees by applying generalised difference operators to the generating functions in Theorem~\ref{thm:QHMTenumeration}.

\begin{theorem}\label{thm:QHTREEenumeration}
	The $Q$-generating function of halved $\mathbf{s}$-trees with prescribed bottom row~$\mathbf{k}$ and no entry greater than $b$ is given by
	\begin{equation}\label{eq:QHTREEenumeration}
		\prod_{r=1}^{\lceil\frac{n}{2}\rceil} \left( -\Qfd_{k_r} \right)^{s_r} \QHMT_n(b;\mathbf{k}).
	\end{equation}
\end{theorem}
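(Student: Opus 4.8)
The plan is to prove Theorem~\ref{thm:QHTREEenumeration} by induction on $s_1+\dots+s_{\lceil n/2\rceil}$, the total number of truncated entries, reducing the statement about halved $\mathbf{s}$-trees to the already-established formula for $\QHMT_n(b;\mathbf{k})$ (Theorem~\ref{thm:QHMTenumeration}). The base case $\mathbf{s}=(0,\dots,0)$ is exactly Theorem~\ref{thm:QHMTenumeration}, since a halved $(0,\dots,0)$-tree is just a halved monotone triangle. For the inductive step, it suffices to show that removing one more entry from the bottom of the $r^{\text{th}}$ $\nearrow$-diagonal corresponds, on the level of generating functions, to applying the operator $-\Qfd_{k_r}$; the full product then follows because operators attached to distinct variables $k_r$ commute, as noted in the preamble.

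First I would set up the combinatorial recursion. Fix a halved $\mathbf{s}$-tree and look at its bottom row $\mathbf{k}=(k_1,\dots,k_{\lceil n/2\rceil})$. The bottom entry $k_r$ in the $r^{\text{th}}$ $\nearrow$-diagonal sits above (is the upper-left or upper-right neighbour of) entries of the next row; removing it produces a halved $\mathbf{s}'$-tree with $s'_r=s_r+1$ whose new bottom row agrees with $\mathbf{k}$ except that the $r^{\text{th}}$ entry is now the old entry $a$ that lay immediately below $k_r$ on that diagonal. The monotonicity constraints of the halved tree say precisely that this $a$ ranges over $a\le k_r$ (weak increase along the $\nearrow$-diagonal), while $a$ must also respect the row inequalities and the other diagonal, but those are inherited automatically once we fix the rest of the larger array. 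The key point is the $Q$-weight bookkeeping: the entry $k_r$ is special exactly when it lies \emph{strictly} between its two lower neighbours, and summing $Q^{[\text{$k_r$ special}]}$ over the admissible values of $k_r$ given a fixed value $a=k_r'$ below it and a fixed right neighbour must reproduce the action of $-\Qfd_{k_r'}$ on the generating function. Concretely, writing $F(k_r)$ for the generating function of the part above (as a function of the bottom entry on that diagonal), the sum over $k_r\in\{k_r',\dots\}$ with the correct weight telescopes: the contributions of non-special occurrences give an ordinary sum $\sum_{k_r\ge k_r'}\fd_{k_r}[F]$-type expression, and the $(1-Q)$ deficit at special positions is exactly what turns $\fd$ into $\Qfd=\fd(\id-(1-Q)\E)^{-1}$ after the geometric-series resummation $(\id-(1-Q)\E)^{-1}=\sum_{j\ge0}(1-Q)^j\E^j$. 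I would make this precise by proving a one-variable identity of the form $\sum_{k\ge a} w(k)\,F(k) = \bigl(-\Qfd_a\bigr)[G(a)]$ for the appropriate $G$, or rather its finite-sum analogue that keeps track of the upper bound $b$ and the right neighbour; the sign $-1$ arises because $\fd_k = \E_k-\id$ and we are summing in the direction of decreasing bottom entry.

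The main obstacle will be handling the boundary behaviour cleanly: the argument has to work uniformly whether or not $a_{i+1,j+1}$ exists (the two cases in the definition of special entry), whether the diagonal being truncated is interior or at the edge, and whether $n$ is odd or even, since $\QHMT_n(b;\mathbf{k})$ itself has two different closed forms. A further subtlety is that the operators $-\Qfd_{k_r}$ in~\eqref{eq:QHTREEenumeration} act on the \emph{polynomial expression} $\QHMT_n(b;\mathbf{k})$, not on the combinatorial quantity, so I need the statement that the operator formula of Theorem~\ref{thm:QHMTenumeration} continues to compute the correct weighted count when the bottom row entries are shifted outside their "natural" range — i.e.\ a polynomiality/consistency lemma for these operator formulae. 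This is standard in Fischer's framework (the operator formula, being a polynomial in the $k_i$, is the unique polynomial extension, and the combinatorial recursion it satisfies is inherited by the polynomial), and I would either cite the relevant lemma from \cite{Fis09,Fis11} or include a short verification that $\QHMT_n(b;\mathbf{k})$ satisfies the same recursion in the bottom row that the truncation operation dictates. Once the one-variable resummation identity and this polynomial-consistency point are in place, the induction closes immediately and gives~\eqref{eq:QHTREEenumeration}.
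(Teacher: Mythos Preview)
Your inductive strategy --- reduce to $\mathbf{s}=(0,\dots,0)$ and show that a single application of $-\Qfd_{k_r}$ peels off one more bottom entry from the $r^{\text{th}}$ diagonal --- is exactly the paper's plan. Where your proposal diverges is in the execution of that single step, and there the exposition is tangled: you speak of the new bottom entry $a$ as lying ``immediately below $k_r$'' and satisfying $a\le k_r$, but $k_r$ is already the bottommost entry on its diagonal, and the entry that becomes the new bottom after truncation lies \emph{above} it and satisfies $k_r\le a$ (weak increase along $\nearrow$-diagonals). Correspondingly, passing from the less-truncated generating function to the more-truncated one is a \emph{difference}, not a sum, over the deleted entry; your telescoping and geometric-series remarks suggest you sense this, but the setup as written points the wrong way.

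The paper's argument for the inductive step is much shorter than the one-variable resummation you outline and sidesteps your boundary and polynomiality worries entirely. It works purely at the operator level: write $\Qfd_{k_1}=\fd_{k_1}\,\Qid_{k_1}^{-1}$ and recall the expression of the summation operator in terms of $\QStrict$ operators (Equation~\eqref{def:SumOp}). Because the inner multi-sum does not depend on $k_1^{(1)}$, the outermost factor $\QStrict_{k_1^{(1)},k_1^{(2)}}$ collapses to $\Qid_{k_1^{(2)}}$, which is then cancelled by $\Qid_{k_1}^{-1}$. What remains is $-\fd_{k_1}$ applied to the bare sum $\sum_{l_1=k_1}^{k_2^{(1)}}(\cdots)$, and Fischer's observation from \cite{Fis11} (pure telescoping) says this simply sets $l_1=k_1$ --- precisely the truncation. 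No separate treatment of the two ``special'' cases, no boundary analysis, and no polynomial-consistency lemma is needed, because the identity is established directly between the operator formula for $\QHMT_n$ and the truncated summation.
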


Note that $\Qfd_x = Q^{-1} \sum_{i=0}^{\infty} (Q^{-1}-1)^i \fd_x^{i+1}$ applied to a polynomial~$f$ in $x$ becomes a finite sum because $\fd_x^{i+1} \left[f(x)\right]$ eventually vanishes for large enough~$i$. Hence, the expression in Theorem~\ref{thm:QHTREEenumeration} is well defined.

In the next theorem, we reformulate~\eqref{eq:QHTREEenumeration} as constant term identities.

\begin{theorem}\label{thm:QHTREEConstantTerm}
	The $Q$-generating function for halved $\mathbf{s}$-trees with prescribed bottom row~$\mathbf{k}$ and no entry larger than $b$ is the constant term in $x_1$, \dots, $x_{(n+1)/2}$ of 	
	\begin{multline*}
		\prod_{r=1}^{\frac{n+1}{2}} x_r^{1-n} (1+x_r)^{k_r-b-\frac{n+1}{2}} \left(-\frac{x_r}{Q-(1-Q)x_r}\right)^{s_r} \prod_{1\leq s<t\leq \frac{n+1}{2}} \left( x_t - x_s\right) \left( x_s + x_t + x_s x_t \right) \\
		\times \left( Q + (Q-1)x_s + x_t + x_s x_t \right) \left( Q + (Q-1)x_s + (Q-1)x_t + (Q-2)x_s x_t \right)
	\end{multline*}
	if $n$ is odd and the constant term in $x_1$, \dots, $x_{n/2}$ of 	
	\begin{multline*}
		(-1)^{\frac{n}{2}} \prod_{r=1}^{\frac{n}{2}} x_r^{1-n}  \left( Q - (1-Q)x_r \right) (1+x_r)^{k_r-b-\frac{n}{2}} \left(-\frac{x_r}{Q-(1-Q)x_r}\right)^{s_r} \prod_{1\leq s<t\leq \frac{n}{2}} \left( x_t - x_s\right) \\
		\times \left( x_s + x_t + x_s x_t \right) \left( Q + (Q-1)x_s + x_t + x_s x_t \right) \left( Q + (Q-1)x_s + (Q-1)x_t + (Q-2)x_s x_t \right)
	\end{multline*}
	if $n$ is even.	
\end{theorem}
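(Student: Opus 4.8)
The plan is to translate the operator formula of Theorem~\ref{thm:QHTREEenumeration} into the asserted constant term expression by representing each shift and $Q$-difference operator as multiplication by an appropriate rational function in auxiliary variables $x_1,\dots,x_{\lceil n/2\rceil}$, exploiting the standard device that the constant term functional linearises the action of these operators on polynomials. Concretely, for a Laurent polynomial one has the formal identity $f(k) = \operatorname{CT}_x\bigl[x^{-1}(1+x)^{-1}\,\tilde f(x)\bigr]$ with $\tilde f$ built so that $\E_k \leftrightarrow$ multiplication by $(1+x)$ and $\id \leftrightarrow$ multiplication by $1$; more precisely one writes the summand of $\QHMT_n(b;\mathbf{k})$, as given by Theorem~\ref{thm:QHMTenumeration}, using a substitution $k_i \mapsto$ exponent in $(1+x_i)$ and interprets each binomial-type factor $(k_j-k_i+j-i)$, $(2b+n+2-k_i-k_j-i-j)$, etc.\ via a contour/constant term integral representation. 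Under this dictionary the operator $\E_{k_s}+\E_{k_t}^{-1}-(2-Q)\E_{k_s}\E_{k_t}^{-1}$ acting on a term carrying $(1+x_s)^{\bullet}(1+x_t)^{\bullet}$ becomes multiplication by
\[
(1+x_s) + (1+x_t)^{-1} - (2-Q)(1+x_s)(1+x_t)^{-1},
\]
and after clearing the denominator $(1+x_t)$ and simplifying one should recover exactly the factor $\bigl(Q + (Q-1)x_s + (Q-1)x_t + (Q-2)x_s x_t\bigr)$ up to the monomial prefactors already present; similarly the companion factor $\E_{k_s}+\E_{k_t}-(2-Q)\E_{k_s}\E_{k_t}$ produces $\bigl(Q + (Q-1)x_s + x_t + x_s x_t\bigr)$, while the Vandermonde-type product $\prod (x_t-x_s)(x_s+x_t+x_sx_t)$ comes from the rational prefactor $\prod \frac{(k_j-k_i+j-i)(2b+n+2-k_i-k_j-i-j)}{(j-i)(i+j-1)}$ in the odd case (and its even-case analogue) after the Lindström–Gessel–Viennot / Cauchy-determinant style rewriting standard in Fischer's papers.

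The steps, in order, would be: (i) recall the precise constant term representation of $\QHMT_n(b;\mathbf{k})$ — this is presumably Theorem~\ref{thm:HMTPQEnumeration} or an intermediate lemma in Section~\ref{sec:Proofs}, or else derived here directly by rewriting the product in Theorem~\ref{thm:QHMTenumeration} as $\operatorname{CT}$ of $\prod_r x_r^{1-n}(1+x_r)^{k_r-b-\frac{n+1}{2}}$ times the symmetric factors listed above without the $s_r$-power; (ii) observe that $-\Qfd_{k_r}$ acting on a function represented by $(1+x_r)^{c}$ inside the constant term corresponds to multiplication by the rational function $-\dfrac{x_r}{Q-(1-Q)x_r}$, since $\fd_{k_r}\leftrightarrow x_r$ (because $\E-\id \leftrightarrow (1+x)-1 = x$) and $(\id-(1-Q)\E)^{-1} \leftrightarrow (1-(1-Q)(1+x))^{-1} = (Q - (1-Q)x)^{-1}$; (iii) take the $s_r$-th power and pull it under the constant term, which is legitimate because, as remarked after Theorem~\ref{thm:QHTREEenumeration}, the operator acts as a terminating sum on polynomials, hence the formal series $\bigl(-x_r/(Q-(1-Q)x_r)\bigr)^{s_r}$ expanded around $x_r=0$ contributes only finitely many relevant terms to the constant term; (iv) assemble the pieces and match the even/odd prefactors, in particular checking that the extra factor $\prod_r\bigl(Q-(1-Q)x_r\bigr)$ and the sign $(-1)^{n/2}$ in the even case arise from the extra operator product $\prod_r((Q-1)\E_{k_r}+\id)$ in Theorem~\ref{thm:QHMTenumeration} — note $((Q-1)\E + \id) \leftrightarrow (Q-1)(1+x)+1 = Q+(Q-1)x = Q-(1-Q)x$, which is exactly right.

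The main obstacle I expect is step (i)–(iv) bookkeeping of the monomial prefactors and the precise clearing of denominators: the operators $\E_{k_t}^{-1}$ and the inverse $(\id-(1-Q)\E)^{-1}$ introduce denominators $(1+x_t)$ and $(Q-(1-Q)x_r)$ that must be absorbed consistently into the factors $x_r^{1-n}$, $(1+x_r)^{k_r-b-\mp}$ and the symmetric product, and verifying that the resulting Laurent polynomial has precisely the claimed form (rather than merely an equivalent one up to a factor that is constant-term-invisible) requires care — especially reconciling why $(1+x_t)^{-1}$ factors fully cancel so that the final integrand is a genuine Laurent polynomial in each $x_t$ with the stated exponents. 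A secondary subtlety is justifying the interchange of the (infinite) formal expansion of $\bigl(-x/(Q-(1-Q)x)\bigr)^{s}$ with the constant term extraction when $b$ and the $k_i$ are treated as formal/indeterminate; here one argues as in Fischer's earlier work that both sides are polynomials in $b$ and in $Q^{-1}$ (respectively terminating series), so the identity of formal constant terms suffices, and the remark following Theorem~\ref{thm:QHTREEenumeration} guarantees well-definedness. Once the dictionary is set up, the rest is a symmetric-function identity that should follow by direct, if tedious, algebraic manipulation of the two-variable factors.
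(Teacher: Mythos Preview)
Your plan is essentially the paper's own route: translate the operator formula of Theorem~\ref{thm:QHTREEenumeration} into a constant term via the dictionary $\E\leftrightarrow 1+x$, $\fd\leftrightarrow x$, $\Qfd\leftrightarrow x/(Q-(1-Q)x)$, and recover the extra even-case factor $\prod_r(Q-(1-Q)x_r)$ from $\prod_r((Q-1)\E_{k_r}+\id)$. Two small corrections and one clarification are in order. First, you have the two symmetric factors swapped: under your dictionary $\E_{k_s}+\E_{k_t}^{-1}-(2-Q)\E_{k_s}\E_{k_t}^{-1}$ yields (after clearing $(1+x_t)^{-1}$) the factor $Q+(Q-1)x_s+x_t+x_sx_t$, while $\E_{k_s}+\E_{k_t}-(2-Q)\E_{k_s}\E_{k_t}$ yields $Q+(Q-1)x_s+(Q-1)x_t+(Q-2)x_sx_t$. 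Second, your step~(i) is the only place where real work hides, and the paper does \emph{not} take it as input from another theorem: it rewrites the operand of Theorem~\ref{thm:QHMTenumeration} as a determinant of binomial coefficients via Lemma~\ref{lem:DetFormulae} (equations~\eqref{eq:HMTDetOdd} and \eqref{eq:HMTDetEven}), expands by Leibniz, uses the key identity $\ct_x\bigl(\fd_x^s\bigl[\binom{x}{t}\bigr]\bigr)=\delta_{s,t}$ to turn operators-on-binomials into coefficient extractions, and then re-sums the permutation sum using the generalised Vandermonde evaluation $\det_{i,j}\bigl(x_i^{n+1-2j}(1+x_i)^{j-1}\bigr)=\pm\prod_{s<t}(x_t-x_s)(x_s+x_t+x_sx_t)$. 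That last determinant is precisely what produces your ``Vandermonde-type'' factors and simultaneously absorbs the stray $(1+x_t)^{-1}$ denominators you were worried about, so no separate cancellation argument is needed.
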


If we set $Q=1$ and $\mathbf{s}=(0,\dots,0)$, we obtain constant term identities for the number of halved monotone triangles. Other constant term identities for the number of halved monotone triangles have already been established \cite[(6.15) \& (6.16)]{Fis09} but they are only valid if $k_i \le b+1-(n+1)/2$ or $k_i \le b+2-n/2$ for the case that $n$ is odd or even, respectively. In contrast, there are no such constraints in Theorem~\ref{thm:QHTREEConstantTerm}.

The following theorem establishes a generating function of halved trees where we impose certain conditions on the bottommost entries of the diagonals. This becomes useful for the $P$-generating function of vertically symmetric alternating sign trapezoids in Theorem~\ref{thm:VSASTPQCEnumeration}.
\begin{theorem}\label{thm:HMTPQEnumeration}
	Let $L_{=} \subseteq \{1,\dots,\lceil n/2 \rceil \}$. The $Q$-generating function of halved $\mathbf{s}$-trees with prescribed bottom row~$\mathbf{k}$ and no entry greater than $b$ such that the two bottommost entries in the $i^\text{th}$ diagonal are equal if $i \in L_{=}$ and different if $i \notin L_{=}$ is given by
	\begin{equation*}
	\prod_{i \in L_{=}} \left( -\Qfd_{k_i} \right) \prod_{\substack{1 \leq i \leq {\lceil \frac{n}{2} \rceil},\\ i \notin L_{=}}} \left( \id + \Qfd_{k_i} \right) \prod_{r=1}^{\lceil\frac{n}{2}\rceil} \left( -\Qfd_{k_r} \right)^{s_r} \QHMT_n(b;\mathbf{k}).
	\end{equation*}
\end{theorem}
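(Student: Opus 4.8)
The plan is to reduce Theorem~\ref{thm:HMTPQEnumeration} to Theorem~\ref{thm:QHTREEenumeration} by an inclusion–exclusion over the set of diagonals on which the two bottommost entries are forced to be equal. Recall from the definition of the $P$-weight that the $i^{\text{th}}$ $\nearrow$-diagonal contributes a factor $P$ precisely when its two bottommost entries agree. The key local fact is that, starting from a halved $\mathbf{s}$-tree, truncating one more entry off the $i^{\text{th}}$ diagonal — i.e.\ passing from $s_i$ to $s_i+1$ — is accomplished at the level of generating functions by applying $-\Qfd_{k_i}$, as already asserted in Theorem~\ref{thm:QHTREEenumeration}. Inspecting that theorem more carefully: the operator $-\Qfd_{k_i}$ does \emph{not} merely shorten the diagonal, it sums over all admissible values of the entry that was removed, i.e.\ over all values strictly between the old bottom entry $k_i$ and the relevant neighbour, weighting each by $Q$ if it is ``special''. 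Concretely, applying $-\Qfd_{k_i}$ to $\QHMT_n(b;\mathbf{k})$ (or to an already-truncated tree) produces the generating function for trees one row shorter in that diagonal whose bottom-but-one entry takes the role of $k_i$; summing this over the now-free entry recovers the unrestricted count. So the operator that ``keeps the diagonal at length-from-bottom $s_i$ but records whether the last step was an equality'' is exactly $\id$ (the entry is equal, i.e.\ no free choice, the new bottom value equals $k_i$) plus $\Qfd_{k_i}$ (the entry is strictly larger, summed with the $Q$-weight), which telescopes to the full $-\Qfd_{k_i}$-free situation. This is precisely the decomposition $-\Qfd_{k_i} = \operatorname{(equality term)} - \operatorname{(strict term)}$ that the statement encodes via the two products.

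The concrete steps I would carry out are: \emph{(1)} Fix $\mathbf{s}$ and apply $\prod_{r}(-\Qfd_{k_r})^{s_r}$ to $\QHMT_n(b;\mathbf{k})$ to land, by Theorem~\ref{thm:QHTREEenumeration}, on the generating function $G(\mathbf{k})$ of halved $\mathbf{s}$-trees with bottom row $\mathbf{k}$ and no entry exceeding $b$. \emph{(2)} For a single diagonal $i$, argue the dichotomy: in any such $\mathbf{s}$-tree, the two bottommost entries $a$ and $a'=k_i$ in the $i^{\text{th}}$ $\nearrow$-diagonal satisfy $a \le a'$ (weak increase along $\nearrow$-diagonals), so either $a = k_i$ or $a < k_i$; the contribution of the ``$a<k_i$'' trees, after stripping that bottom entry, is summed by $\Qfd_{k_i}$ acting on the generating function of the trees with that entry removed, i.e.\ on $(-\Qfd_{k_i})\,G$ up to sign — more cleanly, the generating function of trees with $i^{\text{th}}$ diagonal shortened by one more is $(-\Qfd_{k_i})G$, the ``equal'' subfamily contributes $\id\cdot[(-\Qfd_{k_i})G]$, wait — I must be careful here. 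The correct bookkeeping is: let $H_i$ denote the generating function for $(s_1,\dots,s_i+1,\dots)$-trees; then the $\mathbf{s}$-trees with the $i^{\text{th}}$ bottom pair equal have generating function $H_i$ (read the bottom-but-one entry, which equals $k_i$, as the new $k_i$ — no summation needed), and those with it strictly increasing have generating function $\Qfd_{k_i}[H_i] \cdot(-1)$, adjusted so that $H_i + \text{(strict)} = G$; matching this against $-\Qfd_{k_i} = \id - (\id - \Qfd_{k_i}\cdot(\text{stuff}))$... \emph{(3)} iterate this single-diagonal split independently over $i$, using that operators attached to distinct variables $k_i$ commute, to get the product $\prod_{i\in L_=}(\cdot)\prod_{i\notin L_=}(\cdot)$ form. \emph{(4)} absorb the already-present $\prod_r(-\Qfd_{k_r})^{s_r}$ factor unchanged.

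The main obstacle — and the step that needs genuine care rather than routine manipulation — is step~\emph{(2)}: correctly identifying which operator produces the ``bottom pair equal'' subfamily and which produces the ``bottom pair strictly increasing'' subfamily, and checking the signs and the $Q$-weighting are consistent with the definitions of \emph{special} entry and of $\Qfd$. One has to verify that when the bottom-but-one entry is strictly larger than $k_i$ it is indeed a special entry of the shorter tree (so it should be counted with weight $Q$), that the geometric-series expansion $\Qfd_x = Q^{-1}\sum_{i\ge0}(Q^{-1}-1)^i\fd_x^{i+1}$ correctly performs the weighted summation over all admissible values of that entry, and that the boundary subtlety (the topmost affected entry, where the neighbour $a_{i+1,j+1}$ may or may not exist, cf.\ the two cases in the definition of \emph{special}) is handled uniformly — this is exactly the kind of edge case the paper already navigates in the proof of Theorem~\ref{thm:QHTREEenumeration}, so I expect the argument to mirror that one. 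Once the single-diagonal identity $G = (\id + \Qfd_{k_i})[H_i] + (-\Qfd_{k_i} - \id - \Qfd_{k_i})[\dots]$ — i.e.\ the clean statement that applying $(\id + \Qfd_{k_i})$ versus $(-\Qfd_{k_i})$ to $H_i$ separates the two cases — is pinned down, the rest is a formal commuting-operator bookkeeping that assembles the stated product formula.
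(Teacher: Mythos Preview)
Your overall strategy matches the paper's exactly: use Theorem~\ref{thm:QHTREEenumeration}, split each diagonal into the ``bottom pair equal'' and ``bottom pair different'' cases, and iterate using commutativity of operators in distinct variables. You also land on the correct key bijection --- $\mathbf{s}$-trees with the $i^{\text{th}}$ bottom pair equal are in $Q$-weight-preserving bijection with $(s_1,\dots,s_i{+}1,\dots)$-trees with the same $\mathbf{k}$, so their generating function is $H_i=(-\Qfd_{k_i})G$. That is the entire content of the proof.

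Where your write-up goes wrong is everything after that sentence. First, a sign of confusion: you assert that ``$-\Qfd_{k_i}$ does \emph{not} merely shorten the diagonal, it sums over all admissible values of the entry that was removed.'' This is backwards --- as the proof of Theorem~\ref{thm:QHTREEenumeration} shows, applying $-\Qfd_{k_i}$ truncates the bottom entry and \emph{fixes} $k_i$ as the new bottom; no summation occurs. Second, your inequality is reversed: entries weakly increase along $\nearrow$-diagonals, so the second-bottommost entry $a$ satisfies $a\ge k_i$, not $a\le k_i$. Third, and most importantly, you try to compute the ``strictly different'' subfamily directly and end up with garbled expressions, ultimately claiming the two operators should be applied to $H_i$. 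They are applied to $G$: the ``different'' case is simply the complement,
\[
G-H_i \;=\; G-(-\Qfd_{k_i})G \;=\; (\id+\Qfd_{k_i})G,
\]
which is the paper's one-line argument. There is no need to expand $\Qfd$ as a geometric series, no need to analyse which removed entry is special, and no boundary subtlety to navigate. Once you have $H_i=(-\Qfd_{k_i})G$ and its complement $(\id+\Qfd_{k_i})G$, the product formula follows immediately by iterating over all $i$.
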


\section{Application to Vertically Symmetric Alternating Sign Trapezoids}
\label{sec:EnumAST}

We apply our previous results to the case of vertically symmetric $(n,l)$-alternating sign trapezoids in order to derive a generating function. To this end, we adapt the ideas of \cite{Fis19} to our setting.

Our first result about vertically symmetric alternating sign trapezoids is based on Theorem~\ref{thm:HMTPQEnumeration} and simply follows from Proposition~\ref{pro:Bijection} about the bijection between vertically symmetric alternating sign trapezoids and halved trees. We start with the case of even~$n$.

\begin{theorem}\label{thm:VSASTQCEnumeration}
	Let $n$ be even and $l$ odd and let $C_{10} \subseteq \left\{ c_1,\dots,c_{n/2} \right\}$. The $Q$-generating function of vertically symmetric $(n,l)$-alternating sign trapezoids with $1$-columns in positions $\mathbf{c}=\left(c_1,\dots,c_{n/2}\right)$ such that $c_i$ is a $10$-column if and only if $c_i \in C_{10}$ is given by
	\begin{equation}\label{eq:VSASTQCEnumeration}
	\prod_{c_i \in C_{10}} \left( -\Qfd_{c_i} \right) \prod_{\substack{1 \leq i \leq {\frac{n}{2}},\\ c_i \notin C_{10}}} \left( \id + \Qfd_{c_i} \right) \prod_{r=1}^{\frac{n}{2}} \left( -\Qfd_{c_r} \right)^{-c_r-1} \QHMT_{n-1} \left( \frac{l-5}{2};\mathbf{c} \right).
	\end{equation}
\end{theorem}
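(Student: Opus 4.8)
The plan is to obtain \eqref{eq:VSASTQCEnumeration} directly from Theorem~\ref{thm:HMTPQEnumeration} by transporting it across the bijection of Proposition~\ref{pro:Bijection}; the substance of the argument is the careful matching of parameters.

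First I would fix the combinatorial picture on the trapezoid side. Since $n$ is even and $l$ is odd, necessarily $l\ge 3$, and the discussion in Section~\ref{sec:Preliminaries} shows that a vertically symmetric $(n,l)$-alternating sign trapezoid has exactly $n$ columns of column sum~$1$, placed symmetrically about the (zero-sum) central column; hence exactly $n/2$ of them lie among the $n$ leftmost columns, and their positions form the $1$-column vector $\mathbf{c}=(c_1,\dots,c_{n/2})$ with $-n\le c_1<\dots<c_{n/2}\le -1$. So the hypothesis on $\mathbf{c}$ is the generic one, and for $C_{10}\subseteq\{c_1,\dots,c_{n/2}\}$ it is meaningful to ask for the sub-family of trapezoids in which $c_i$ is a $10$-column precisely when $c_i\in C_{10}$.

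Next I would invoke Proposition~\ref{pro:Bijection}: this sub-family (before imposing the $C_{10}$-condition) is in $Q$-weight-preserving bijection with the halved $\mathbf{s}$-trees having $n-1$ rows, bottom row $\mathbf{k}=\mathbf{c}$, and no entry larger than $b=(l-5)/2$, where $\mathbf{s}=(s_1,\dots,s_{n/2})$ with $s_r=-c_r-1$. I would check that $\mathbf{s}$ is an admissible truncation sequence: from $-n\le c_1<\dots<c_{n/2}\le -1$ one gets $0\le s_{n/2}\le\dots\le s_1$, so $\mathbf{s}$ is weakly decreasing and nonnegative, and its length $n/2$ equals $\lceil(n-1)/2\rceil$, so the requirement $\lceil(n-1)/2\rceil\ge m$ holds (with equality). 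Moreover, the observation recorded just before Proposition~\ref{pro:Bijection} says that $c_i$ is a $10$-column if and only if the two bottommost entries in the $i^{\text{th}}$ $\nearrow$-diagonal of the associated halved tree are equal; hence the condition "$c_i$ is a $10$-column iff $c_i\in C_{10}$" corresponds, under the bijection, exactly to the condition in Theorem~\ref{thm:HMTPQEnumeration} taken with $L_{=}=\{\,i\in\{1,\dots,n/2\}:c_i\in C_{10}\,\}$.

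Finally I would apply Theorem~\ref{thm:HMTPQEnumeration} to this family of halved trees, i.e.\ with $n$ replaced by $n-1$ (so $\lceil(n-1)/2\rceil=n/2$), with $\mathbf{k}=\mathbf{c}$, $b=(l-5)/2$, $\mathbf{s}=(-c_1-1,\dots,-c_{n/2}-1)$, and $L_{=}$ as above. Since the bijection preserves the $Q$-weight, the generating function sought equals the one furnished by Theorem~\ref{thm:HMTPQEnumeration}, namely
\[
\prod_{c_i\in C_{10}}\left(-\Qfd_{c_i}\right)\prod_{\substack{1\le i\le n/2,\\ c_i\notin C_{10}}}\left(\id+\Qfd_{c_i}\right)\prod_{r=1}^{n/2}\left(-\Qfd_{c_r}\right)^{-c_r-1}\QHMT_{n-1}\left(\frac{l-5}{2};\mathbf{c}\right),
\]
which is precisely~\eqref{eq:VSASTQCEnumeration}. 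I do not expect a genuine obstacle here: the only points needing attention are the bookkeeping of parities and index shifts (the halved tree has odd order $n-1$, so $\lceil(n-1)/2\rceil=n/2$ and the lengths of $\mathbf{c}$ and $\mathbf{s}$ agree), and the boundary case $l=3$, where $b=-1$; there one should simply confirm that $\QHMT_{n-1}(b;\mathbf{k})$ and the operator identities of Theorems~\ref{thm:QHMTenumeration} and~\ref{thm:HMTPQEnumeration} were set up to remain valid for this value of $b$, which is a matter of checking conventions rather than of new argument.
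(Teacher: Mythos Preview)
Your proposal is correct and follows essentially the same approach as the paper: the paper's proof of Theorem~\ref{thm:VSASTQCEnumeration} simply says that it follows from Theorem~\ref{thm:HMTPQEnumeration} via the bijection of Proposition~\ref{pro:Bijection}, which is exactly what you do, only with the parameter matching spelled out in more detail.
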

In the following theorem, we derive the $PQ$-generating function of vertically symmetric alternating sign trapezoids with a given distribution of $1$-columns. 
\begin{theorem}\label{thm:VSASTPQCEnumeration}
	Let $n$ be even  and $l$ odd. The $PQ$-generating function of vertically symmetric $(n,l)$-alternating sign trapezoids with $1$-columns in positions $\mathbf{c}=\left(c_1,\dots,c_{n/2}\right)$ is given by
	\begin{equation*}
		\prod_{r=1}^{\frac{n}{2}} \left( \id - (P-1) \Qfd_{c_r} \right) \left( -\Qfd_{c_r} \right)^{-c_r-1} \QHMT_{n-1} \left( \frac{l-5}{2};\mathbf{c} \right).
	\end{equation*}
	This is equal to the constant term in $x_1$, \dots, $x_{n/2}$ of
	\begin{multline*}
		\prod_{r=1}^{\frac{n}{2}} x_r^{2-n} \left(1+x_r\right)^{c_r-\frac{l-5}{2}-\frac{n}{2}} \left( \frac{Q-(P-Q)x_r}{Q-(1-Q)x_r} \right) \left(- \frac{x_r}{Q - (1-Q)x_r}\right)^{-c_r-1}\\
		\times \prod_{1\leq s<t\leq \frac{n}{2}} \left( \left( x_t - x_s\right) \left( x_s + x_t + x_s x_t \right) \right.\\
		\left. \times \left( Q + (Q-1)x_s + x_t + x_s x_t \right) \left( Q + (Q-1)x_s + (Q-1)x_t + (Q-2)x_s x_t \right) \right).
	\end{multline*}
\end{theorem}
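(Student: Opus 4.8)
The plan is to read off the $PQ$-generating function as a sum over the possible $10$-column patterns of the $Q$-generating functions supplied by Theorem~\ref{thm:VSASTQCEnumeration}, to collapse this sum into a single product of commuting difference operators, and then to translate the resulting operator formula into a constant term by the standard correspondence between these operators (acting on $\mathbf c$) and multiplication by rational functions of the constant-term variables.

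First I would observe that, by the definition of the $P$-weight (equivalently, through the bijection of Proposition~\ref{pro:Bijection}), a vertically symmetric $(n,l)$-alternating sign trapezoid with $1$-columns in positions $\mathbf c=(c_1,\dots,c_{n/2})$ whose $10$-columns are exactly those in a subset $C_{10}\subseteq\{c_1,\dots,c_{n/2}\}$ has $P$-weight $P^{|C_{10}|}$. Consequently the $PQ$-generating function in question is $\sum_{C_{10}}P^{|C_{10}|}$ times the right-hand side of~\eqref{eq:VSASTQCEnumeration}, the sum ranging over all subsets $C_{10}$ of $\{c_1,\dots,c_{n/2}\}$. Because the operators $\Qfd_{c_1},\dots,\Qfd_{c_{n/2}}$ act on pairwise distinct variables, they commute, so the summation over subsets factorises index by index — each $i$ contributing $P\,(-\Qfd_{c_i})$ when $c_i\in C_{10}$ and $\id+\Qfd_{c_i}$ otherwise — giving
\[
\sum_{C_{10}\subseteq\{c_1,\dots,c_{n/2}\}}P^{|C_{10}|}\prod_{c_i\in C_{10}}(-\Qfd_{c_i})\prod_{\substack{1\le i\le n/2\\ c_i\notin C_{10}}}(\id+\Qfd_{c_i})=\prod_{i=1}^{n/2}\bigl(\id+(1-P)\Qfd_{c_i}\bigr).
\]
Since $\id+(1-P)\Qfd_{c_i}=\id-(P-1)\Qfd_{c_i}$ and all operators occurring commute, this combines with the remaining factor $\prod_{r=1}^{n/2}(-\Qfd_{c_r})^{-c_r-1}\QHMT_{n-1}\bigl(\tfrac{l-5}{2};\mathbf c\bigr)$ of~\eqref{eq:VSASTQCEnumeration} to yield exactly the operator formula asserted in the theorem.

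It then remains to rewrite this operator formula as a constant term, for which I would use the transfer rule underlying Theorem~\ref{thm:QHTREEConstantTerm}: if a function of $\mathbf k$ is the constant term in $x_1,\dots,x_{n/2}$ of an integrand in which the variable $k_r$ enters only through the factor $(1+x_r)^{k_r}$, then $\E_{k_r}$ corresponds to inserting the factor $(1+x_r)$, hence $\fd_{k_r}$ to inserting $x_r$, the operator $(\id-(1-Q)\E_{k_r})^{-1}$ to inserting $\bigl(Q-(1-Q)x_r\bigr)^{-1}$ expanded as a formal power series in $x_r$, and therefore $\Qfd_{k_r}$ to inserting $x_r/(Q-(1-Q)x_r)$; each such insertion keeps the integrand a Laurent series in $x_r$ with only finitely many negative-degree terms, so the constant term stays well defined. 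Invoking Theorem~\ref{thm:QHTREEConstantTerm} in its odd case with $n$ replaced by $n-1$, with $b=(l-5)/2$, $\mathbf k=\mathbf c$ and $\mathbf s=(-c_1-1,\dots,-c_{n/2}-1)$ — admissible because $-n\le c_r\le-1$ forces $-c_r-1\ge0$ — expresses $\prod_{r=1}^{n/2}(-\Qfd_{c_r})^{-c_r-1}\QHMT_{n-1}\bigl(\tfrac{l-5}{2};\mathbf c\bigr)$ as the stated constant term, the data matching because $x_r^{1-(n-1)}=x_r^{2-n}$, $(1+x_r)^{c_r-(l-5)/2-(n-1+1)/2}=(1+x_r)^{c_r-(l-5)/2-n/2}$, and there are $(n-1+1)/2=n/2$ variables. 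Finally the extra operator $\prod_r(\id-(P-1)\Qfd_{c_r})$ inserts $\prod_r\bigl(1-(P-1)\tfrac{x_r}{Q-(1-Q)x_r}\bigr)=\prod_r\tfrac{Q-(P-Q)x_r}{Q-(1-Q)x_r}$, precisely the additional rational factor in the statement.

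The main obstacle is the bookkeeping in this last step: the operators $\Qfd_{c_r}$ must be applied with the exponents $-c_r-1$ frozen, so that in the constant-term integrand the variable $c_r$ is seen only through $(1+x_r)^{c_r}$, the identification $s_r=-c_r-1$ being performed only at the very end. This is the usual convention behind Fischer's operator formulae and is already implicit in Theorems~\ref{thm:VSASTQCEnumeration} and~\ref{thm:QHTREEConstantTerm}, but it deserves an explicit remark; once it is in place, what is left is the routine simplification $Q-(1-Q)x_r-(P-1)x_r=Q-(P-Q)x_r$ together with the matching of the $(1+x_r)$-exponents and of the number of constant-term variables against the odd case of Theorem~\ref{thm:QHTREEConstantTerm}.
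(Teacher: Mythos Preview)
Your proof is correct and follows the same overall strategy as the paper: sum the expression of Theorem~\ref{thm:VSASTQCEnumeration} over all subsets $C_{10}$ with weight $P^{|C_{10}|}$, collapse into a single operator product, and then convert to a constant term as in the proof of Theorem~\ref{thm:QHTREEConstantTerm}.

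Your execution of the first step is, however, more direct than the paper's. The paper first invokes the identity $Q^{-1}\bd_x(\id+\Qfd_x)=\Qfd_x$ to rewrite each $-\Qfd_{c_i}$ as $(\id+\Qfd_{c_i})(-Q^{-1}\bd_{c_i})$, thereby pulling a common factor $\prod_r(\id+\Qfd_{c_r})^{-c_r}$ out of every summand; the $C_{10}$-dependent part then reduces to $\prod_{c_i\in C_{10}}(-Q^{-1}\bd_{c_i})$, which is summed via elementary symmetric functions to $\prod_r(\id-PQ^{-1}\bd_{c_r})$, and only afterwards is everything reassembled into $\prod_r(\id-(P-1)\Qfd_{c_r})$. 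You bypass this detour entirely by expanding $\prod_i\bigl[(\id+\Qfd_{c_i})+P(-\Qfd_{c_i})\bigr]$ in one line. Both routes are valid; yours is shorter, while the paper's makes the role of the elementary symmetric functions explicit. Your remark about freezing the exponents $-c_r-1$ before applying the difference operators is a point the paper leaves implicit, and it is well worth stating.
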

In the next theorem, we provide the generating function of all vertically symmetric $(n,l)$-alternating sign trapezoids without prescribing the positions of the $1$-columns.
\begin{theorem}\label{thm:VSASTPQEnumeration}
	Let $n$ be even and $l$ odd. The $PQ$-generating function of vertically symmetric $(n,l)$-alternating sign trapezoids is given by the constant term in $x_1$, \dots, $x_{n/2}$ of
	\begin{multline*}
		\frac{1}{\left(\frac{n}{2}\right)!} \prod_{r=1}^{\frac{n}{2}} \frac{ Q+(Q-P) x_r}{x_r^{n-2}  \left(1+x_r\right)^{\frac{l-5}{2}+\frac{n}{2}} \left(Q(1+x_r)^2 - x_r^2\right)} \\
		\times \prod_{1\leq s<t\leq \frac{n}{2}} \frac{\left( x_t - x_s\right)^2 \left( x_s + x_t + x_s x_t \right) \left( Q + (Q-1)x_s + (Q-1)x_t + (Q-2)x_s x_t \right) \left(Q-x_s x_t\right)}{Q(1+x_s)(1+x_t)-x_s x_t}.
	\end{multline*}
\end{theorem}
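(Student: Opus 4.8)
The plan is to start from Theorem~\ref{thm:VSASTPQCEnumeration}, which gives the $PQ$-generating function of vertically symmetric $(n,l)$-alternating sign trapezoids with $1$-columns in a \emph{prescribed} position vector $\mathbf{c}=(c_1,\dots,c_{n/2})$ as a constant term in $x_1,\dots,x_{n/2}$ of an explicit product. To obtain the generating function over \emph{all} vertically symmetric alternating sign trapezoids, I would sum this constant term expression over all admissible $1$-column vectors, i.e.\ over all strictly increasing tuples $-n \le c_1 < \dots < c_{n/2} \le -1$. The key observation is that in the integrand of Theorem~\ref{thm:VSASTPQCEnumeration} the dependence on $c_r$ enters only through the factor $(1+x_r)^{c_r}\bigl(-x_r/(Q-(1-Q)x_r)\bigr)^{-c_r}$, that is, through a single geometric-type term $z_r^{-c_r}$ with $z_r \coloneqq -(Q-(1-Q)x_r)/(x_r(1+x_r))$, while everything else is either independent of $\mathbf{c}$ or (in the case of the $\prod_{s<t}$ Vandermonde-type factor) depends only on the variables $x_s,x_t$. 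Hence the sum over strictly increasing $\mathbf{c}$ factors as an antisymmetrised geometric sum.

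The main steps are as follows. First, rewrite the summand of Theorem~\ref{thm:VSASTPQCEnumeration} isolating the $c_r$-dependence, and note that the constant-term extraction and the (finite) sum over $\mathbf{c}$ may be interchanged. Second, symmetrise: the product $\prod_{1\le s<t\le n/2}(x_t-x_s)(\cdots)$ is antisymmetric under permuting the $x_i$ only through its Vandermonde part $\prod_{s<t}(x_t-x_s)$, the remaining three quadratic factors being symmetric in $x_s,x_t$; so after dividing by $\prod_{s<t}$ of the symmetric factors I can symmetrise the whole integrand over $S_{n/2}$ at the cost of a factor $1/(n/2)!$, turning the strict-inequality sum $\sum_{c_1<\dots<c_{n/2}}$ into an unrestricted sum $\sum_{c_1,\dots,c_{n/2}}$ paired with a Vandermonde (this is the standard Lindström–Gessel–Viennot / Weyl-denominator manoeuvre: $\det\bigl(z_i^{-c_j}\bigr)$ summed over the simplex equals, up to sign, the symmetrised product of geometric series). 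Third, evaluate each one-dimensional geometric sum $\sum_{c=-n}^{-1} z^{-c}$ — or, after extending the range as permitted by the constant-term formalism, $\sum_{c} z^{-c}$ — in closed form; the $\sum_{c=1}^{n}z^{c}=z(z^{n}-1)/(z-1)$ type evaluation produces exactly a denominator $z_r-1 = -(Q(1+x_r)^2-x_r^2)/(x_r(1+x_r))$, which accounts for the factor $Q(1+x_r)^2-x_r^2$ appearing in the denominator of Theorem~\ref{thm:VSASTPQEnumeration}. Fourth, recombine: the numerator $Q+(Q-P)x_r$ and the denominators $x_r^{n-2}(1+x_r)^{(l-5)/2+n/2}$ come directly from the per-$r$ prefactors of Theorem~\ref{thm:VSASTPQCEnumeration} after absorbing the boundary contributions of the geometric sum, and one checks that the "$z^{n}$" boundary term of each geometric series contributes only to the constant term in a way that either vanishes or reproduces a symmetric shift, so it can be dropped or reincorporated. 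Finally, simplify the product $\prod_{s<t}$: combining the Vandermonde squared $(x_t-x_s)^2$ (one factor from Theorem~\ref{thm:VSASTPQCEnumeration}, one from the symmetrisation) with the three quadratic factors and dividing out the symmetric factor $Q+(Q-1)x_s+x_t+x_sx_t$ that gets cancelled against $Q-x_sx_t$ and $Q(1+x_s)(1+x_t)-x_sx_t$ yields precisely the displayed $\prod_{1\le s<t\le n/2}$ in Theorem~\ref{thm:VSASTPQEnumeration}; this is a routine but delicate polynomial identity in $x_s,x_t,Q$ that I would verify by direct expansion.

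The main obstacle I anticipate is bookkeeping the boundary terms of the truncated geometric sums $\sum_{c=-n}^{-1}$: one must argue that replacing this by an unrestricted (or infinite, formal) sum does not change the constant term, which requires knowing that the extra terms correspond to monomials in the $x_i$ that either lie outside the "constant-term cone" (because of the negative powers $x_r^{2-n}$ and the $(1+x_r)^{-\text{(large)}}$ factors, the relevant Laurent expansion is around $x_r=0$) or cancel after symmetrisation. Concretely, one geometric-sum endpoint gives a genuine pole structure producing the denominator $Q(1+x_r)^2-x_r^2$, while the other endpoint contributes a polynomial correction; showing the latter drops out of the constant term — presumably because it is killed either by the Vandermonde antisymmetry or by a degree count against $x_r^{n-2}$ — is the crux. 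Once that is settled, the remaining work is the symmetric-function simplification of the $\prod_{s<t}$ block, which, while tedious, is mechanical. I would also double-check the overall sign and the factor $1/(n/2)!$ by testing small cases, e.g.\ $n=2$ against the list in Table~\ref{fig:(2,5)-ASTrap}.
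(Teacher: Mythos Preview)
Your overall strategy---start from Theorem~\ref{thm:VSASTPQCEnumeration}, sum over all admissible $1$-column vectors $\mathbf{c}$, then symmetrise via the Stanton--Stembridge trick with a factor $1/(n/2)!$---is exactly the route the paper takes. However, there is a concrete error in your symmetrisation step that blocks the rest of the argument. You assert that, apart from the Vandermonde $(x_t-x_s)$, ``the remaining three quadratic factors [are] symmetric in $x_s,x_t$''. This is false: the factor
\[
Q+(Q-1)x_s+x_t+x_sx_t
\]
is \emph{not} symmetric under $x_s\leftrightarrow x_t$ (the linear coefficients are $Q-1$ and $1$). Hence the $\prod_{s<t}$ block in Theorem~\ref{thm:VSASTPQCEnumeration} is, for $Q\ne 1$, neither symmetric nor antisymmetric, and you cannot pull everything except the Vandermonde outside the symmetriser. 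In particular, your plan to reduce the simplex sum $\sum_{c_1<\dots<c_{n/2}}$ to a product of independent one-dimensional geometric series $\sum_c z_r^{-c}$ does not go through, and the ``routine polynomial identity in $x_s,x_t,Q$'' you propose at the end is not a two-variable identity at all.

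What the paper actually does at this point: it first evaluates the simplex sum in closed form as the \emph{nested} product
\[
\sum_{c_1<\dots<c_m\le 0}\prod_{r=1}^m y_r^{-c_r}=\prod_{r=1}^m\frac{y_r^{\,m-r}}{1-\prod_{j=1}^{r}y_j},
\]
which does not factor over $r$. After Stanton--Stembridge, the genuinely symmetric factors $(x_s+x_t+x_sx_t)$ and $(Q+(Q-1)x_s+(Q-1)x_t+(Q-2)x_sx_t)$ together with one Vandermonde $(x_t-x_s)$ come outside, while the asymmetric factor $Q+(Q-1)x_s+x_t+x_sx_t$ stays \emph{inside} the antisymmetriser coupled to the nested product. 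This antisymmetriser is then evaluated by a dedicated $m$-variable identity, Lemma~\ref{lem:QASym}, proved by induction on $m$; it is this lemma that simultaneously produces the second Vandermonde factor, the numerator $(Q-x_sx_t)$, and the denominators $Q(1+x_r)^2-x_r^2$ and $Q(1+x_s)(1+x_t)-x_sx_t$. That lemma is the missing ingredient in your outline. By contrast, the issue you flag as the ``main obstacle''---the lower boundary $c_r\ge -n$ of the geometric sum---is not where the difficulty lies: the paper simply sums over the infinite simplex $c_1<\dots<c_{n/2}\le 0$, relying on the fact that the constant-term expression of Theorem~\ref{thm:VSASTPQCEnumeration} already vanishes for out-of-range $\mathbf{c}$.
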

If $n$ is odd, the bottom row of a vertically symmetric $(n,1)$-alternating sign trapezoid is either $1$ or $0$. We can delete this row and obtain a vertically symmetric $(n-1,3)$-alternating sign trapezoid as aforementioned. This observation implies the following theorem. 
\begin{corollary}\label{thm:VSASTPQEnumerationOdd}
	Let $n$ be odd. The $PQ$-generating function of vertically symmetric $(n,1)$-alternating sign trapezoids is given by the constant term in $x_1$, \dots, $x_{(n-1)/2}$ of
	\begin{multline*}
		\frac{2}{\left(\frac{n-1}{2}\right)!} \prod_{r=1}^{\frac{n-1}{2}} \frac{ Q+(Q-P) x_r}{x_r^{n-3}  \left(1+x_r\right)^{\frac{n-3}{2}} \left(Q(1+x_r)^2 - x_r^2\right)} \\
		\times \prod_{1\leq s<t\leq \frac{n-1}{2}} \frac{\left( x_t - x_s\right)^2 \left( x_s + x_t + x_s x_t \right) \left( Q + (Q-1)x_s + (Q-1)x_t + (Q-2)x_s x_t \right) \left(Q-x_s x_t\right)}{Q(1+x_s)(1+x_t)-x_s x_t}.
	\end{multline*}
\end{corollary}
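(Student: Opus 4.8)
The plan is to reduce the odd case to the even case already settled in Theorem~\ref{thm:VSASTPQEnumeration}, using the bottom-entry deletion described in Section~\ref{sec:Preliminaries}. Let $n$ be odd. A vertically symmetric $(n,1)$-alternating sign trapezoid has a bottom row consisting of the single central entry $a_{n,n}\in\{0,1\}$; deleting it produces an array of row lengths $2n-1,2n-3,\dots,5,3$, that is, a vertically symmetric $(n-1,3)$-alternating sign trapezoid, and for each prescribed value of $a_{n,n}$ this deletion is a bijection onto the full set of vertically symmetric $(n-1,3)$-alternating sign trapezoids. Hence, once weight preservation is checked, the $PQ$-generating function of vertically symmetric $(n,1)$-alternating sign trapezoids equals $2$ times that of vertically symmetric $(n-1,3)$-alternating sign trapezoids.

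Next I would verify that the deletion preserves both weights. For the $Q$-weight this is immediate: the deletion only removes the central entry, hence does not touch the $n-1$ leftmost columns, and for $l=1$ as well as for $l=3$ (with $n$ replaced by $n-1$) the $Q$-weight counts $-1$s precisely in the $n-1$ leftmost columns, since $n-1+\tfrac{1-1}{2}=(n-1)-1+\tfrac{3-1}{2}=n-1$. The $P$-weight requires a little more care, because it counts $10$-columns among the $n-1$ leftmost columns in the $(n,1)$-case but among the $n-2$ leftmost columns in the $(n-1,3)$-case, so on the left-hand side one extra column — the one immediately to the left of the central column — is included. The key observation is that in a vertically symmetric $(n,1)$-alternating sign trapezoid with $n$ odd this column can never be a $10$-column: vertical symmetry forces the central column to be $(1,-1,1,\dots,-1,1)^\top$ or $(1,-1,1,\dots,-1,0)^\top$, so its entry in row $n-1$ is $(-1)^{n-2}=-1$, and since that row has length $3$, sums to $1$, and is itself symmetric, its two outer entries must be $1$; thus the bottom entry of the column in question is $1$, making it an $11$-column or a $0$-column. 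Since the remaining columns $1,\dots,n-2$ are reached by the same rows in both trapezoids and their entries are untouched, their status as $10$-columns is unchanged, and the $P$-weight is preserved.

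Finally I would specialise Theorem~\ref{thm:VSASTPQEnumeration}: $n$ odd means $n-1$ is even, and substituting $n\mapsto n-1$ and $l=3$ gives $\tfrac{l-5}{2}=-1$, $(n-1)/2$ variables, and the exponents simplify via $(n-1)-2=n-3$ and $\tfrac{l-5}{2}+\tfrac{n-1}{2}=\tfrac{n-3}{2}$. Multiplying the resulting constant-term expression by $2$ turns the prefactor $\tfrac{1}{((n-1)/2)!}$ into $\tfrac{2}{((n-1)/2)!}$ and reproduces exactly the claimed formula.

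The main obstacle, as indicated above, is not the algebra — which is a direct specialisation of Theorem~\ref{thm:VSASTPQEnumeration} — but the $P$-weight bookkeeping: one must confirm that the discrepancy between ``$n-1$ leftmost columns'' and ``$n-2$ leftmost columns'' is harmless, which follows from the structural observation that the penultimate column is never a $10$-column in this situation. Everything else is an immediate consequence of the correspondence recorded in Section~\ref{sec:Preliminaries}.
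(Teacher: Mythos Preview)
Your proposal is correct and follows exactly the same approach as the paper: reduce to Theorem~\ref{thm:VSASTPQEnumeration} via the two-to-one deletion of the bottom entry described in Section~\ref{sec:Preliminaries}, then specialise $n\mapsto n-1$, $l=3$, and multiply by~$2$. The paper states this reduction without spelling out the weight preservation; your additional check that the column immediately left of center is always an $11$-column (and hence the apparent mismatch between ``$n-1$ leftmost'' and ``$n-2$ leftmost'' in the $P$-weight is harmless) is a welcome clarification rather than a deviation.
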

Vertically symmetric alternating sign triangles of order~$n$ -- the order~$n$ has to be odd -- can be transformed into vertically symmetric $(n-1,3)$-alternating sign trapezoids by cutting off the entry $1$ in the bottom row. If we define the $P$-weight and $Q$-weight in a similar way for alternating sign triangles, we obtain the following generating function:
\begin{corollary}\label{thm:VSASTrianglePQEnumeration}
	The $PQ$-generating function of vertically symmetric alternating sign triangles of order~$n$ is given by the constant term in $x_1$, \dots, $x_{(n-1)/2}$ of
	\begin{multline*}
		\frac{1}{\left(\frac{n-1}{2}\right)!} \prod_{r=1}^{\frac{n-1}{2}} \frac{ Q+(Q-P) x_r}{x_r^{n-3}  \left(1+x_r\right)^{\frac{n-3}{2}} \left(Q(1+x_r)^2 - x_r^2\right)} \\
		\times \prod_{1\leq s<t\leq \frac{n-1}{2}} \frac{\left( x_t - x_s\right)^2 \left( x_s + x_t + x_s x_t \right) \left( Q + (Q-1)x_s + (Q-1)x_t + (Q-2)x_s x_t \right) \left(Q-x_s x_t\right)}{Q(1+x_s)(1+x_t)-x_s x_t}.
	\end{multline*}
\end{corollary}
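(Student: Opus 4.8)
The plan is to deduce the statement directly from Theorem~\ref{thm:VSASTPQEnumeration} by means of the correspondence between alternating sign triangles of order~$n$ and $(n-1,3)$-alternating sign trapezoids recalled in Section~\ref{sec:Preliminaries}. First I would record that a vertically symmetric alternating sign triangle of order~$n$ forces $n$ to be odd, and that its bottom row --- a single entry lying on the axis of symmetry --- must equal~$1$, because the entries of each row sum to~$1$ and the topmost nonzero entry in the central column is~$1$. Deleting this bottom entry therefore produces a vertically symmetric $(n-1,3)$-alternating sign trapezoid (note $n-1$ is even), and adjoining a~$1$ below such a trapezoid inverts the operation. Hence deletion of the bottom entry is a bijection between vertically symmetric alternating sign triangles of order~$n$ and vertically symmetric $(n-1,3)$-alternating sign trapezoids.

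Next I would verify that this bijection preserves both statistics. The deleted entry is a~$1$ sitting in the central column, so it is not a~$-1$ and it lies strictly to the right of the $n-1$, respectively $n-2$, leftmost columns that appear in the definition of the $Q$-weight, respectively $P$-weight, of a vertically symmetric $(n-1,3)$-alternating sign trapezoid; furthermore it is not the bottom entry of any of those leftmost columns, so the $10$-/$11$-classification there is untouched. Consequently, once the $P$-weight and $Q$-weight of vertically symmetric alternating sign triangles are defined in the natural way matching those of the associated trapezoids, the deletion map is $P$- and $Q$-weight preserving, and the $PQ$-generating function of vertically symmetric alternating sign triangles of order~$n$ coincides with that of vertically symmetric $(n-1,3)$-alternating sign trapezoids.

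Finally I would apply Theorem~\ref{thm:VSASTPQEnumeration} with its even parameter~$n$ replaced by~$n-1$ and with $l=3$. Under $l=3$ one has $(l-5)/2=-1$, so in the single product the exponent of~$x_r$ becomes $(n-1)-2=n-3$ and the exponent of~$1+x_r$ becomes $\tfrac{l-5}{2}+\tfrac{n-1}{2}=\tfrac{n-3}{2}$, the prefactor is $\tfrac{1}{((n-1)/2)!}$, and the double product over $1\le s<t\le (n-1)/2$ is unchanged; this is precisely the claimed constant-term expression. I expect no real obstacle in this argument, since all the substantive content lies in Theorem~\ref{thm:VSASTPQEnumeration}; the only point demanding care is fixing the ``similar'' definition of the $P$- and $Q$-weights for alternating sign triangles so that the deletion map is genuinely weight preserving, i.e.\ checking that the single deleted central~$1$ is invisible to both statistics. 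It is also worth remarking on the comparison with Corollary~\ref{thm:VSASTPQEnumerationOdd}: the factor~$2$ present there (but absent here) comes from the two admissible values $0$ and~$1$ for the deleted bottom entry of a vertically symmetric $(n,1)$-alternating sign trapezoid with $n$ odd, whereas for an alternating sign triangle the deleted bottom entry is forced to be~$1$.
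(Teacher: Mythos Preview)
Your proposal is correct and follows exactly the approach the paper takes: the paper simply observes, in the sentence preceding the corollary, that vertically symmetric alternating sign triangles of order~$n$ become vertically symmetric $(n-1,3)$-alternating sign trapezoids upon deleting the forced bottom entry~$1$, and then reads off the formula from Theorem~\ref{thm:VSASTPQEnumeration} with $n\to n-1$ and $l=3$. Your additional remarks---the explicit check that the deleted central~$1$ is invisible to both statistics, and the explanation of the factor~$2$ in Corollary~\ref{thm:VSASTPQEnumerationOdd} versus its absence here---go slightly beyond what the paper spells out but are consistent with it.
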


\section{Proofs}
\label{sec:Proofs}
In this section, we provide the proofs for the statements in Section~\ref{sec:EnumHMT} and \ref{sec:EnumAST}. The involved formulae comprise many different operators. Table~\ref{tab:Op} gives an overview of the basic and $Q$-generalised operators we use. By setting $Q=1$, we obtain the corresponding nongeneralised operators. Note that we do not use the nongeneralised version of $\T_{x,y}$.

In addition, we introduce the following notation for constant term formulae. We denote by
\begin{equation*}
\ct_{x_1,\dots,x_m} \left(f(x_1,\dots,x_m)\right)
\end{equation*}
the constant term of a formal Laurent series~$f$, that means the coefficient of the term $x_1^0 \cdots x_m^0$.
	
	\begin{table}[htb]
		\centering
		\caption{Overview of basic and generalised operators}
		\begin{tabular}{ll}
			\toprule
			identity operator & $\id_x \left[f(x)\right] \coloneqq f(x)$ \\
			shift operator & $\E_x \left[f(x)\right] \coloneqq f(x+1)$\\
			forward difference & $\fd_x \coloneqq \E_x-\id_x$\\
			backward difference & $\bd_x \coloneqq \id_x-\E_x^{-1}$\\
			strict operator& $\Strict_{x,y} \coloneqq \E_{x}^{-1}+\E_{y}-\E_{x}^{-1}\E_{y}$\\
			\midrule
			$Q$-identity operator & $\Qid_x \coloneqq (Q-1) \E_x + \id_x$\\
			$Q$-shift operator & $\QE_x \coloneqq (Q-1) \id_x + \E_x$\\
			$Q$-forward difference & $\Qfd_x \coloneqq \fd_x (\id_x - (1-Q)\E_x)^{-1}$\\
			$Q$-strict operator & $\QStrict_{x,y} \coloneqq \E_{x}^{-1}+\E_{y}-(2-Q)\E_{x}^{-1}\E_{y}$\\
			modified $Q$-strict operator
			& $\T_{x,y} \coloneqq \E_x + \E_y - (2-Q)\E_x \E_y$\\
			\bottomrule
		\end{tabular}
		\label{tab:Op}
	\end{table}

\subsection{Proof of Theorem~\ref{thm:QHMTenumeration}}

Theorem~\ref{thm:QHMTenumeration} is the basic result on the enumeration of halved monotone triangles. Its proof is split up into several steps. First, Lemma~\ref{lem:DetFormulae} enables us to rewrite the operands appearing in Theorem~\ref{thm:QHMTenumeration} using determinants. Secondly, we observe how to recursively build up halved monotone triangles which leads to the definition of certain summation operators. Thirdly, in Lemma~\ref{lem:SumOpNormal} and \ref{lem:SumOpAlt}, we see how to apply the summation operators to certain polynomials. Finally, in Lemma~\ref{lem:AppSumOdd} and \ref{lem:AppSumEven}, we apply these results to the operands in Theorem~\ref{thm:QHMTenumeration}.

\begin{lemma}
	\label{lem:DetFormulae}
	The following determinant evaluations hold true:
	\begin{align}
	\det_{1 \leq i,j \leq n} \left( \binom{k_i+j-1}{2j-1} \right)
	&= \prod_{1 \leq i < j \leq n}\frac{(k_j-k_i)(k_j+k_i)}{(j-i)(j+i)} \prod_{i=1}^n\frac{k_i}{i}, \label{eq:DetFormulaeEven}\\
	\det_{1 \leq i,j \leq n} \left( \binom{k_i+j-\frac{3}{2}}{2j-2} \right) &= \prod_{1 \leq i < j \leq n} \frac{(k_j-k_i)(k_j+k_i)}{(j-i)(j+i-1)}. \label{eq:DetFormulaeOdd}
	\end{align}
\end{lemma}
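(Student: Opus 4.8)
The plan is to recognise both determinants as instances of a single ``universal'' family and evaluate that family once. Both matrices have $(i,j)$-entry of the form $\binom{k_i+c_j}{d_j}$ where the column parameters $c_j,d_j$ are affine in $j$; expanding the binomial coefficient as a polynomial in $k_i$, one sees that the $j$-th column of either matrix is a polynomial in $k_i$ of degree exactly $d_j$ (namely $2j-1$ in the first case, $2j-2$ in the second), with the leading coefficient independent of $i$. Hence each determinant is, up to a constant prefactor coming from those leading coefficients, a generalised Vandermonde-type determinant $\det_{1\le i,j\le n}\bigl(p_j(k_i)\bigr)$ with $\deg p_j = 2j-2$ (after factoring out one power of $k_i$ from each column in the even case, where $p_j$ is odd of degree $2j-1$). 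The key structural point is that in both cases the polynomials involved are \emph{even} (resp.\ odd) in $k_i$, so the determinant is a symmetric function vanishing whenever $k_i=\pm k_j$, which forces divisibility by $\prod_{i<j}(k_j-k_i)(k_j+k_i)$.

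First I would carry out the degree/parity bookkeeping carefully: in \eqref{eq:DetFormulaeOdd}, $\binom{k+j-3/2}{2j-2}=\frac{1}{(2j-2)!}\prod_{r=0}^{2j-3}\bigl(k+j-3/2-r\bigr)$, and since the $2j-2$ factors are symmetric about $k=0$ (the shifts $j-3/2-r$ for $r=0,\dots,2j-3$ come in pairs $\pm(1/2),\pm(3/2),\dots$), this is an even polynomial in $k$ of degree $2j-2$ with leading coefficient $1/(2j-2)!$. Similarly in \eqref{eq:DetFormulaeEven}, $\binom{k+j-1}{2j-1}=\frac{k}{(2j-1)!}\prod_{r=1}^{2j-2}\bigl((k+j-1-r)\cdots\bigr)$ factors as $k$ times an even polynomial of degree $2j-2$ with leading coefficient $1/(2j-1)!$. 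So after pulling out the prefactors $\prod_j \frac{1}{(2j-2)!}$ (resp.\ $\prod_j\frac{k_j}{(2j-1)!}\cdot$ the column-wise $k_i$) the remaining determinant has the form $\det(q_j(k_i^2))$ with $\deg q_j = j-1$, which is a plain Vandermonde in the variables $k_i^2$, equal to $\prod_{i<j}(k_j^2-k_i^2)$.

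Then I would assemble the pieces: $\prod_{i<j}(k_j^2-k_i^2)=\prod_{i<j}(k_j-k_i)(k_j+k_i)$, and the reciprocal-factorial prefactors must be matched against the denominators $\prod_{i<j}(j-i)(j+i-1)$ in \eqref{eq:DetFormulaeOdd} and $\prod_{i<j}(j-i)(j+i)\prod_i i$ in \eqref{eq:DetFormulaeEven}. The standard identity $\prod_{j=1}^{n}(2j-2)! = \prod_{1\le i<j\le n}(j-i)(j+i-1)$ (and its odd analogue $\prod_{j=1}^n(2j-1)!=\prod_i(2i-1)\cdot\prod_{i<j}(j-i)(j+i-1)$, rearranged to give $\prod_i i \cdot \prod_{i<j}(j-i)(j+i)$ after the symmetry manipulation) closes the gap; this is most cleanly verified by induction on $n$ or by a direct product-rearrangement. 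As an alternative to the Vandermonde-reduction, one can instead argue purely by the ``identification of factors'' method: both sides are polynomials in $k_1,\dots,k_n$, the left side has degree $\le \binom{n}{2}\cdot 2$ (even case: plus $n$), it vanishes to the right order on each hyperplane $k_i=\pm k_j$ by row/column dependence, so the right side divides it, degrees match, and a single specialisation (e.g.\ $k_i=i$, where the matrix becomes unitriangular-like and the determinant is computable) fixes the constant.

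The main obstacle I expect is purely the constant-tracking: confirming that the product of reciprocal factorials $\prod_j 1/(2j-2)!$ (respectively the mixed expression in the even case) really does collapse to the stated denominator $\prod_{i<j}(j-i)(j+i-1)$ and the extra $\prod_i i$. The cleanest route is probably to prove the identity $\prod_{j=1}^n (2j-2)! = \prod_{1\le i<j\le n}(j-i)(j+i-1)$ as a small standalone lemma by induction (the inductive step isolates $\prod_{i<n}(n-i)(n+i-1) = (n-1)!\cdot\frac{(2n-2)!}{(n-1)!} = (2n-2)!$), and handle the even case by first extracting the column factor $k_i$ and an overall $\prod_j 1/(2j-1)!$, reducing it to the odd identity plus the elementary $\prod_{j=1}^n(2j-1)!/(2j-2)! = \prod_{j=1}^n(2j-1)$, which matches $\prod_i(2i-1)$ against the change from $(j+i-1)$ to $(j+i)$ in the denominator. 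Everything else is bookkeeping that the even/odd parity of the binomial polynomials makes routine.
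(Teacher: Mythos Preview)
Your argument is correct. The parity observation---that $\binom{k+j-3/2}{2j-2}$ is an even polynomial in $k$ of degree $2j-2$ with leading coefficient $1/(2j-2)!$, and that $\binom{k+j-1}{2j-1}$ is $k$ times an even polynomial of degree $2j-2$ with leading coefficient $1/(2j-1)!$---is exactly right, and reducing to a Vandermonde in the variables $k_i^2$ after extracting the column factorials (and, in the even case, the row factors $k_i$) is a clean way to finish. Your inductive verification of $\prod_{j=1}^n (2j-2)! = \prod_{1\le i<j\le n}(j-i)(j+i-1)$ is correct, and the analogous identity $\prod_{j=1}^n (2j-1)! = \prod_{i=1}^n i \cdot \prod_{1\le i<j\le n}(j-i)(j+i)$ follows by the same one-line induction (the ratio at step $n$ is $(2n-1)! = n\cdot(n-1)!\cdot(2n-1)!/n!$). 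One small wording slip: in the even case you write ``$\prod_j k_j/(2j-1)!\cdot$ the column-wise $k_i$''; what you mean is that $1/(2j-1)!$ comes out of column $j$ while $k_i$ comes out of \emph{row} $i$, giving the factor $\prod_i k_i$. This is cosmetic and does not affect the argument.

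By contrast, the paper does not prove this lemma at all: it simply cites Lemma~13 of Fischer's earlier paper and observes that both identities are specialisations of Theorem~27 in Krattenthaler's determinant survey (with the substitutions $L_i=k_i-n-1$, $A=2n+1$ for \eqref{eq:DetFormulaeEven} and $L_i=k_i-n-1/2$, $A=2n$ for \eqref{eq:DetFormulaeOdd}). Your direct approach has the advantage of being self-contained and of making the mechanism transparent: the product form on the right is visibly forced by the even/odd parity of the column polynomials together with the Vandermonde structure, and the constants fall out of the two factorial-product identities. The citation approach, on the other hand, places the result in the context of a more general determinant evaluation (Krattenthaler's Theorem~27 handles a broader family with an extra parameter), which is useful if one wants to generalise further.
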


Lemma~\ref{lem:DetFormulae} is proved as Lemma~13 in \cite{Fis09}. Moreover, Lemma~\ref{lem:DetFormulae} follows from Theorem~27 in \cite{Kra01}. In particular, replacing $j$ by $n + 1 - j$ on the left-hand side of \cite[Theorem~27]{Kra01} with $q = 1$ and then setting
$L_i = k_i - n - 1$ and $A = 2n + 1$ yields Equation~\eqref{eq:DetFormulaeEven}, while setting $L_i = k_i - n - 1/2$ and $A = 2n$ yields Equation~\eqref{eq:DetFormulaeOdd}. 

If we replace $k_i$ by $k_i+i-b-n/2-1$ and $n$ by $n/2$ for even $n$ as well as $k_i$ by $k_i+i-b-(n+1)/2-1/2$ and $n$ by $(n+1)/2$ for odd $n$ in the Equations~\eqref{eq:DetFormulaeEven} and \eqref{eq:DetFormulaeOdd}, respectively, we obtain the following two evaluations:

If $n$ is even, then
\begin{multline}\label{eq:HMTDetEven}
	\prod_{1\leq i<j\leq \frac{n}{2}}\frac{(k_j-k_i+j-i)(2b+n+2-k_j-k_i-j-i)}{(j-i)(j+i)} \prod_{i=1}^{\frac{n}{2}}\frac{b+\frac{n}{2}+1-k_i-i}{i}\\
	= (-1)^{\binom{\frac{n}{2}+1}{2}} \det_{1\leq i,j\leq\frac{n}{2}} \left( \binom{k_i+i+j-b-\frac{n}{2}-2}{2j-1} \right)
\end{multline}
and if $n$ is odd, then
\begin{multline}\label{eq:HMTDetOdd}
	\prod_{1\leq i<j\leq \frac{n+1}{2}}\frac{(k_j-k_i+j-i)(2b+n+2-k_j-k_i-j-i)}{(j-i)(j+i-1)}\\
	= (-1)^{\binom{\frac{n+1}{2}}{2}} \det_{1\leq i,j\leq\frac{n+1}{2}} \left( \binom{k_i+i+j-b-\frac{n+1}{2}-2}{2j-2} \right).
\end{multline}

We take advantage of the recursive structure of halved monotone triangles: If we cut off the bottom row of a halved monotone triangle of order~$n$, we obtain a halved monotone triangle of order~$n-1$.

\begin{figure}[htb]
	\centering
	\begin{equation*}
	\begin{array}[t]{ccccccccccccccccc}
	&&l_1&&&&&&&&l_{\frac{n-3}{2}}&&&&l_{\frac{n-1}{2}}&&\\
	&&\bullet&&<&&\cdots&&<&&\bullet&&<&&\bullet&&\\
	&\lle&&\rle&&&&&&&&\rle&&\lle&&\rle&\\
	\bullet&&<&&\bullet&&<&&\cdots&&<&&\bullet&&<&&\bullet\\
	k_1&&&&k_2&&&&&&&&k_{\frac{n-1}{2}}&&&&k_{\frac{n+1}{2}}
	\end{array}
	\end{equation*}
	\caption{Bottom and penultimate row of a halved monotone triangle of odd order}
	\label{fig:BottomRowsOdd}
\end{figure}

Suppose $n$ is odd. Figure~\ref{fig:BottomRowsOdd} shows the bottom and penultimate row of a halved monotone triangle of order $n$. In order to enumerate all halved monotone triangles of order~$n$ with bottom row $(k_1,\dots,k_{(n+1)/2})$ and no entry larger than $b$, we have to sum over all halved monotone triangles of order~$n-1$ with bottom row $(l_1,\dots,l_{(n-1)/2})$ and no entry larger than $b$ such that $l_1 < \dots < l_{(n-1)/2}$ and $k_1 \le l_1 \le k_2 \le \dots \le k_{(n-1)/2} \le l_{(n-1)/2} \le k_{(n+1)/2}$. Taking the $Q$-weight into account, this observation motivates the definition of the following \emph{summation operator} for arbitrary $n$:
\begin{equation*}
\Qsum{n-1}{n} f(l_1, l_2, \dots,l_{n-1}) \coloneqq \sum_{\substack{ l_1 < l_2 < \dots < l_{n-1}, \\ k_1 \leq l_1 \leq k_2 \leq \dots \leq k_{n-1} \leq l_{n-1} \leq k_n}} \hspace*{-1ex} Q^{[k_{1} < l_{1} < k_{2}]+\dots+[k_{n-1} < l_{n-1} < k_{n}]} f(l_1, \dots, l_{n-1}),
\end{equation*}
where we make use of the \emph{Iverson bracket}: For any logical proposition~$P$, $[P]=1$ if $P$ is satisfied and $[P]=0$ otherwise.

If we set $Q=1$, Fischer \cite{Fis16} showed how to approach the summation problem by means of operators: 

\begin{proposition}\label{pro:strictFischer}
	The following operator formula holds:
\begin{equation}
	\label{eq:strictFischer}
	\sum_{\substack{l_{i-1} < l_{i}, \\ k_{i-1} \leq l_{i-1} \leq k_{i} \leq l_{i} \leq k_{i+1}}} f(l_{i-1},l_{i})\\
	=
	\left.\left( 
	\Strict_{k_{i}^{(1)},k_{i}^{(2)}}
	\left[ \sum_{l_{i-1}=k_{i-1}^{(2)}}^{k_{i}^{(1)}} \sum_{l_{i}=k_{i}^{(2)}}^{k_{i+1}^{(1)}} f(l_{i-1},l_{i}) \right] \right)\right|_{k_{i}^{(1)}=k_{i}^{(2)}=k_{i}}.
\end{equation}
\end{proposition}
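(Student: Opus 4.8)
The plan is to verify the operator formula in Proposition~\ref{pro:strictFischer} by evaluating the right-hand side explicitly and matching it with the double sum on the left. First I would expand the inner iterated sum $\sum_{l_{i-1}=k_{i-1}^{(2)}}^{k_{i}^{(1)}} \sum_{l_i=k_i^{(2)}}^{k_{i+1}^{(1)}} f(l_{i-1},l_i)$, which ranges over the rectangle $k_{i-1}^{(2)} \le l_{i-1} \le k_i^{(1)}$ and $k_i^{(2)} \le l_i \le k_{i+1}^{(1)}$; note that on the left-hand side we instead need the triangular-type region where additionally $l_{i-1} < l_i$ and $l_{i-1} \le k_i \le l_i$ hold. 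The role of the operator $\Strict_{k_i^{(1)},k_i^{(2)}} = \E_{k_i^{(1)}}^{-1} + \E_{k_i^{(2)}} - \E_{k_i^{(1)}}^{-1}\E_{k_i^{(2)}}$, followed by the specialisation $k_i^{(1)}=k_i^{(2)}=k_i$, is precisely to cut the rectangle down to this region and to supply the constraint $l_{i-1} \le k_i \le l_i$ with the correct treatment of the boundary cases $l_{i-1}=k_i$ and $l_i=k_i$.

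The key step is to compute the effect of each of the three terms of $\Strict$ on the rectangular sum. Writing $S(a,b) = \sum_{l_{i-1}=a}^{k_i^{(1)}} \sum_{l_i=b}^{k_{i+1}^{(1)}} f(l_{i-1},l_i)$ as a function of $k_i^{(1)}$ (through the upper limit of the $l_{i-1}$-sum) and $k_i^{(2)}$ (through the lower limit $b = k_i^{(2)}$ of the $l_i$-sum), one has: $\E_{k_i^{(1)}}^{-1}$ replaces the upper limit $k_i^{(1)}$ by $k_i^{(1)}-1$, i.e.\ it restricts to $l_{i-1} \le k_i^{(1)}-1$; $\E_{k_i^{(2)}}$ replaces the lower limit $k_i^{(2)}$ by $k_i^{(2)}+1$, i.e.\ it restricts to $l_i \ge k_i^{(2)}+1$; and the product term restricts to $l_{i-1} \le k_i^{(1)}-1$ and $l_i \ge k_i^{(2)}+1$ simultaneously. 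After setting $k_i^{(1)}=k_i^{(2)}=k_i$, inclusion--exclusion on these three contributions gives $\sum_{l_{i-1}=k_{i-1}}^{k_i} \sum_{l_i=k_i}^{k_{i+1}} f - \sum_{l_{i-1}=k_{i-1}}^{k_i-1}\sum_{l_i=k_i}^{k_{i+1}} f - \sum_{l_{i-1}=k_{i-1}}^{k_i}\sum_{l_i=k_i+1}^{k_{i+1}} f + \sum_{l_{i-1}=k_{i-1}}^{k_i-1}\sum_{l_i=k_i+1}^{k_{i+1}} f$, and here I would identify this alternating combination with the sum over exactly the pairs $(l_{i-1},l_i)$ with $k_{i-1}\le l_{i-1}\le k_i\le l_i\le k_{i+1}$ that are \emph{not} of the form $l_{i-1}=l_i=k_i$ — equivalently those with $l_{i-1}<l_i$, since in the region $l_{i-1}\le k_i\le l_i$ the only way to have $l_{i-1}\ge l_i$ is $l_{i-1}=l_i=k_i$. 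That is exactly the left-hand side of \eqref{eq:strictFischer}.

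The main obstacle, and the part deserving care, is the boundary bookkeeping: one must check that the corner term $l_{i-1}=l_i=k_i$ is removed with total coefficient exactly $1$ (so it does not appear on the left) while every other admissible pair survives with coefficient exactly $1$, and in particular that pairs with $l_{i-1}=k_i<l_i$ or $l_{i-1}<k_i=l_i$ are retained. This is a routine but slightly delicate inclusion--exclusion over the four rectangles above; the cleanest way is to fix a pair $(l_{i-1},l_i)$ in the region $k_{i-1}\le l_{i-1}\le k_i\le l_i\le k_{i+1}$ and tabulate which of the four sums it belongs to according to whether $l_{i-1}=k_i$ and whether $l_i=k_i$, checking that the signed count is $0$ precisely when $l_{i-1}=l_i=k_i$ and $1$ otherwise. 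Since this is the $Q=1$ specialisation, no $Q$-powers intervene here; the statement is quoted from Fischer~\cite{Fis16}, so it suffices to indicate this verification rather than reproduce it in full.
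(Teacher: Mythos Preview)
Your overall approach---expand $\Strict$ term by term on the rectangular double sum, specialise, and do a pointwise case check---is exactly how the paper handles the $Q$-analogue in~\eqref{eq:QStrict}; the paper does not give an independent proof of Proposition~\ref{pro:strictFischer} but simply attributes it to Fischer~\cite{Fis16}. So the strategy is fine and matches the paper's style.

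There is, however, a concrete sign error in your displayed combination. Applying $\Strict_{k_i^{(1)},k_i^{(2)}}=\E_{k_i^{(1)}}^{-1}+\E_{k_i^{(2)}}-\E_{k_i^{(1)}}^{-1}\E_{k_i^{(2)}}$ to the rectangle and then setting $k_i^{(1)}=k_i^{(2)}=k_i$ gives \emph{three} terms,
\[
\sum_{l_{i-1}=k_{i-1}}^{k_i-1}\sum_{l_i=k_i}^{k_{i+1}} f
\;+\;\sum_{l_{i-1}=k_{i-1}}^{k_i}\sum_{l_i=k_i+1}^{k_{i+1}} f
\;-\;\sum_{l_{i-1}=k_{i-1}}^{k_i-1}\sum_{l_i=k_i+1}^{k_{i+1}} f,
\]
not your four-term expression. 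What you wrote down is $(\id-\E_{k_i^{(1)}}^{-1})(\id-\E_{k_i^{(2)}})$ applied to the rectangle, i.e.\ $\id-\Strict$, and it collapses to the single value $f(k_i,k_i)$---precisely the complement (within the region $k_{i-1}\le l_{i-1}\le k_i\le l_i\le k_{i+1}$) of the sum you want. With the corrected three-term combination your subsequent case analysis works verbatim: the pair $(k_i,k_i)$ receives coefficient $0$ and every other admissible pair receives coefficient $1$, which is the left-hand side of~\eqref{eq:strictFischer}.
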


If we omitted the operator on the right-hand side of Equation~\eqref{eq:strictFischer}, we would sum over all $l_{i-1} \le l_{i}$ such that $k_{i-1} \leq l_{i-1} \leq k_{i} \leq l_{i} \leq k_{i+1}$; that is, the \emph{strict operator} ensures the strict monotonicity $l_{i-1} < l_{i}$. We want to generalise Proposition~\ref{pro:strictFischer} by incorporating the $Q$-weight. To this end, we introduce the \emph{$Q$-strict operator}. By a straightforward computation, it holds that
\begin{multline}
	\label{eq:QStrict}
Q^{-1} \left.\left( \QStrict_{k_{i}^{(1)},k_{i}^{(2)}} \QStrict_{k_{i+1}^{(1)},k_{i+1}^{(2)}}  \left[\sum_{l_{i}=k_{i}^{(2)}}^{k_{i+1}^{(1)}} f(l_{i}) \right] \right)\right|_{\substack{k_{j}^{(1)}=k_{j}^{(2)}=k_{j} \\ \forall j\in\{i,i+1\}}}\\
= Q^{-1} \left(\sum_{l_{i}=k_{i}}^{k_{i+1}} f(l_{i}) + \left(Q-1\right) \sum_{l_{i}=k_{i}+1}^{k_{i+1}} f(l_{i}) + \left(Q-1\right) \sum_{l_{i}=k_{i}}^{k_{i+1}-1} f(l_{i}) + \left(Q-1\right)^2 \sum_{l_{i}=k_{i}+1}^{k_{i+1}-1} f(l_{i})\right)\\
=\sum_{l_i=k_i}^{k_{i+1}} Q^{\left[k_{i} < l_{i} < k_{i+1}\right]} f(l_{i}).
\end{multline}
By extending Equation~\eqref{eq:QStrict}, we see that $\Qsum{n-1}{n} f(l_1, l_2, \dots,l_{n-1})$ is equivalent to
\begin{equation}\label{def:SumOp}
	Q^{-1} \left.\left(\QStrict_{k_{1}^{(1)},k_{1}^{(2)}} \cdots \QStrict_{k_{n}^{(1)},k_{n}^{(2)}}
	\left[\sum_{l_{1}=k_{1}^{(2)}}^{k_{2}^{(1)}} \cdots \hspace{-1ex} \sum_{l_{n-1}=k_{n-1}^{(2)}}^{k_{n}^{(1)}} f(l_1, \dots, l_{n-1})\right] \right) \right|_{k_{i}^{(1)}=k_{i}^{(2)}=k_{i}}.
\end{equation}

Figure~\ref{fig:BottomRowsEven} shows the bottom row $(k_1,\dots,k_{n/2})$ and the penultimate row $(l_1,\dots,l_{n/2})$ of a halved monotone triangle of even order $n$ and no entry larger than $b$. The entry $l_{n/2}$ is only bounded from above by $b$, not by an entry of the bottom row.

\begin{figure}[htb]
	\centering
	\begin{equation*}
	\begin{array}[t]{ccccccccccccccccccc}
	&&l_1&&&&&&&&l_{\frac{n}{2}-1}&&&&l_{\frac{n}{2}}&&&&\\
	&&\bullet&&<&&\cdots&&<&&\bullet&&<&&\bullet&&\le&&b\\
	&\lle&&\rle&&&&&&&&\rle&&\lle&&&&&\\
	\bullet&&<&&\bullet&&<&&\cdots&&<&&\bullet&&&&&&\\
	k_1&&&&k_2&&&&&&&&k_{\frac{n}{2}}&&&&&&
	\end{array}
	\end{equation*}
	\caption{Bottom and penultimate row of a halved monotone triangle of even order}
	\label{fig:BottomRowsEven}
\end{figure}

This observation leads to the following alternative version of the summation operator:
\begin{equation*}
\sum_{\substack{ l_1 < l_2 < \dots < l_{n-1}, \\ k_1 \leq l_1 \leq k_2 \leq \dots \leq k_{n-1} \leq l_{n-1} \leq k_n}} \hspace*{-1ex} Q^{[k_{1} < l_{1} < k_{2}]+\dots+[k_{n-2} < l_{n-2} < k_{n-1}]+[k_{n-1} < l_{n-1}]} f(l_1, \dots, l_{n-1}),
\end{equation*}
which we define for arbitrary $n$ and denote by $\AQsum{n-1}{n} f(l_1, l_2, \dots,l_{n-1})$. Since
\begin{equation*}
	\left. \left(\QStrict_{k_{i}^{(1)},k_{i}^{(2)}} \left[\sum_{l_{i}=k_{i}^{(2)}}^{k_{i+1}^{(1)}} f(l_{i})\right] \right) \right|_{\substack{k_{j}^{(1)}=k_{j}^{(2)}=k_{j} \\ \forall j\in\{i,i+1\}}}
	= \sum_{l_{i}=k_{i}}^{k_{i+1}} f(l_{i}) + \left(Q-1\right) \sum_{l_{i}=k_{i}+1}^{k_{i+1}} f(l_{i}) 
	=\sum_{l_i=k_i}^{k_{i+1}} Q^{\left[k_{i} < l_{i}\right]} f(l_{i}),
\end{equation*}
we can conclude that
\begin{multline}\label{def:AltSumOp}
	\AQsum{n-1}{n} f(l_1, l_2, \dots,l_{n-1})\\
	= \left.\left(\QStrict_{k_{1}^{(1)},k_{1}^{(2)}} \cdots \QStrict_{k_{n-1}^{(1)},k_{n-1}^{(2)}} \left[\sum_{l_{1}=k_{1}^{(2)}}^{k_{2}^{(1)}} \cdots \hspace{-1ex} \sum_{l_{n-1}=k_{n-1}^{(2)}}^{k_{n}^{(1)}} f(l_1, \dots, l_{n-1})\right] \right) \right|_{k_{i}^{(1)}=k_{i}^{(2)}=k_{i}}.
\end{multline}
We are now in a position to state a recursive formula for the generating functions $\QHMT_n\left(b;\mathbf{k}\right)$:
\begin{align*}
	\QHMT_n\left(b;\left(k_1,\dots,k_{\frac{n+1}{2}}\right)\right) &= \Qsum{\frac{n-1}{2}}{\frac{n+1}{2}} \QHMT_{n-1}\left(b;\left(l_1,\dots,l_{\frac{n-1}{2}}\right)\right) &\text{if $n$ is odd,}\\
	\QHMT_n\left(b;\left(k_1,\dots,k_{\frac{n}{2}}\right)\right) &= \AQbsum{\frac{n}{2}} \QHMT_{n-1}\left(b;\left(l_1,\dots,l_{\frac{n}{2}}\right)\right) &\text{if $n$ is even.}
\end{align*}

In the next two lemmata, we show how to apply the summation operator and its variation to certain kinds of polynomials. Lemma~\ref{lem:SumOpNormal} is a corollary of \cite[Lemma 1]{Fis10}, and Lemma~\ref{lem:SumOpAlt} is a variation thereof.

%We define the generalised identity operator $\Qid_x \coloneqq (Q-1) \E_x + \id$ and the generalised shift operator $\QE_x \coloneqq (Q-1) \id + \E_x$. If we set $Q=1$, we recover the standard identity and shift operator, respectively.

\begin{lemma}\label{lem:SumOpNormal}
	Let $g(l_1,\dots ,l_{n-1})$ be a polynomial in $\left(l_1,\dots ,l_{n-1}\right)$ such that \newline $\left.\QStrict_{l_i,l_{i+1}} \left[g(l_1,\dots ,l_{n-1})\right] \right|_{l_i=l_{i+1}+1}$ vanishes for every $i\in\{1,\dots ,n-2\}$. Then
	\begin{equation*}
		\Qsum{n-1}{n}\prod_{i=1}^{n-1}\fd_{l_i} \left[g(l_1,\dots,l_{n-1})\right] = \sum_{r=1}^n (-1)^{r-1}\prod_{s=1}^{r-1} \Qid_{k_s}\prod_{t=r+1}^{n} \QE_{k_t} \left[g(k_1,\dots ,\widehat{k_r},\dots ,k_n)\right],
	\end{equation*}
	where $g(k_1,\dots ,\widehat{k_r},\dots ,k_n) \coloneqq g(k_1,\dots ,k_{r-1},k_{r+1},\dots ,k_n)$.
\end{lemma}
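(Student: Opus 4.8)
The plan is to reduce Lemma~\ref{lem:SumOpNormal} to the $Q=1$ version, namely \cite[Lemma~1]{Fis10}, by absorbing the $Q$-strict operators into the summation bounds. First I would recall from the identity \eqref{def:SumOp} that
\[
\Qsum{n-1}{n} h(l_1,\dots,l_{n-1})
= Q^{-1}\left.\left(\QStrict_{k_1^{(1)},k_1^{(2)}}\cdots\QStrict_{k_n^{(1)},k_n^{(2)}}\left[\sum_{l_1=k_1^{(2)}}^{k_2^{(1)}}\cdots\sum_{l_{n-1}=k_{n-1}^{(2)}}^{k_n^{(1)}} h(l_1,\dots,l_{n-1})\right]\right)\right|_{k_i^{(1)}=k_i^{(2)}=k_i},
\]
apply this with $h = \prod_i \fd_{l_i}[g]$, and telescope each inner sum: $\sum_{l_i=a}^{b}\fd_{l_i}[g] = g|_{l_i=b+1} - g|_{l_i=a}$. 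Expanding the product over $i$ gives a signed sum over subsets of $\{1,\dots,n-1\}$, where for $i$ in the subset we substitute the lower endpoint $k_i^{(2)}$ and for $i$ not in the subset we substitute the upper endpoint $k_{i+1}^{(1)}+1$. The hypothesis that $\QStrict_{l_i,l_{i+1}}[g]$ vanishes at $l_i=l_{i+1}+1$ is exactly what is needed to kill the ``interior'' terms: whenever two consecutive telescoped endpoints collide at a common $k_j$ (upper endpoint of block $i$ meeting lower endpoint of block $i{+}1$ after the specialisation $k_j^{(1)}=k_j^{(2)}=k_j$), the corresponding $\QStrict_{k_j^{(1)},k_j^{(2)}}$ annihilates the contribution. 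Hence only the $n$ ``boundary'' terms survive, indexed by the breakpoint $r\in\{1,\dots,n\}$ where the subset is $\{1,\dots,r-1\}$ (lower endpoints to the left, upper endpoints to the right).

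For each surviving term $r$, the remaining task is bookkeeping on the operators. After the telescoping, the $l_s$ for $s<r$ have been replaced by $k_s^{(2)}$ and the $l_t$ for $t\ge r$ by $k_{t+1}^{(1)}$ (shifted by the $+1$ from the forward difference), so that $g$ is evaluated at the arguments $(k_1,\dots,k_{r-1},k_{r+1},\dots,k_n)$ with the appropriate superscripts. The key computational step is to show that acting with $\QStrict_{k_j^{(1)},k_j^{(2)}}=\E_{k_j^{(1)}}^{-1}+\E_{k_j^{(2)}}-(2-Q)\E_{k_j^{(1)}}^{-1}\E_{k_j^{(2)}}$ on a function that depends on $k_j$ only through $k_j^{(1)}$ (for $j>r$, coming from an upper endpoint) and then specialising $k_j^{(1)}=k_j^{(2)}=k_j$ produces the operator $\QE_{k_j} = (Q-1)\id_{k_j}+\E_{k_j}$, while the analogous computation for a function depending on $k_j$ only through $k_j^{(2)}$ (for $j<r$, a lower endpoint) produces $\Qid_{k_j}=(Q-1)\E_{k_j}+\id_{k_j}$; the leftover factor $Q^{-1}$ from \eqref{def:SumOp} is cancelled by the operator $\QStrict_{k_r^{(1)},k_r^{(2)}}$ acting on a function independent of $k_r$, which evaluates to $1+1-(2-Q)=Q$. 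One must also track the shift by $1$ carefully: the upper endpoint of block $t$ is $k_{t+1}^{(1)}+1$, which after the $\QStrict$ manipulation and relabelling matches $\E_{k_{t+1}}$ composed with the $Q$-shift data — exactly the structure of $\prod_{t=r+1}^n \QE_{k_t}$ applied to $g(k_1,\dots,\widehat{k_r},\dots,k_n)$.

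The main obstacle is the careful combinatorial verification that the hypothesis on $\QStrict_{l_i,l_{i+1}}[g]$ at $l_i=l_{i+1}+1$ precisely matches the cancellation of interior terms when the telescoped endpoints of adjacent blocks coincide after specialisation. Concretely, one has to check that a collision $l_i \mapsto k_{i+1}^{(1)}+1$ and $l_{i+1}\mapsto k_{i+1}^{(2)}$ followed by the action of $\QStrict_{k_{i+1}^{(1)},k_{i+1}^{(2)}}$ and the specialisation $k_{i+1}^{(1)}=k_{i+1}^{(2)}$ reproduces exactly the operator $\QStrict_{l_i,l_{i+1}}$ evaluated at $l_i=l_{i+1}+1$ acting on $g$, up to a variable renaming — this identification is what allows invoking the hypothesis. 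Once this correspondence is pinned down, the statement follows by the same inductive/structural argument as in \cite[Lemma~1]{Fis10}, now with $\fd$, $\id$, $\E$ replaced by their $Q$-analogues throughout; alternatively, one can deduce it as a formal corollary of \cite[Lemma~1]{Fis10} by first noting that $\QStrict_{x,y}$ factors appropriately in terms of $\Strict_{x,y}$ and the $Q$-shift operators, reducing the $Q$-weighted summation to the unweighted one after a change of operators. I expect the cleanest writeup to follow the latter route, deriving Lemma~\ref{lem:SumOpNormal} directly from \cite[Lemma~1]{Fis10} via the operator identities in Table~\ref{tab:Op}.
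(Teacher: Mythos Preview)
Your proposal is correct. The paper does not give a proof at all; it simply states that Lemma~\ref{lem:SumOpNormal} is a corollary of \cite[Lemma~1]{Fis10}, which is precisely the route you flag at the end as the cleanest one. Your telescoping argument is essentially the proof of \cite[Lemma~1]{Fis10} rewritten in the $Q$-setting, and your bookkeeping on the $\QStrict$ operators (in particular, that $\QStrict_{k_j^{(1)},k_j^{(2)}}$ collapses to $\Qid_{k_j}$ on functions of $k_j^{(2)}$ alone, to $(Q-1)\E_{k_j}^{-1}+\id$ on functions of $k_j^{(1)}$ alone --- which combines with the $+1$ shift to give $\QE_{k_j}$ --- and to the scalar $Q$ at the missing index~$r$) is exactly right.
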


\begin{lemma}\label{lem:SumOpAlt}
	Let $g(l_1,\dots ,l_{n-1})$ be a polynomial in $\left(l_1,\dots ,l_{n-1}\right)$ such that \newline $\left.\QStrict_{l_i,l_{i+1}} \left[g(l_1,\dots ,l_{n-1})\right] \right|_{l_i=l_{i+1}+1}$ vanishes for every $i\in\{1,\dots ,n-2\}$. Then
	\begin{multline*}
		\AQsum{n-1}{n} \prod_{i=1}^{n-1}\fd_{l_i} \left[g(l_1,\dots,l_{n-1})\right] = 
		Q \sum_{r=1}^{n-1} (-1)^{r-1}\prod_{s=1}^{r-1} \Qid_{k_s}\prod_{t=r+1}^{n-1} \QE_{k_t} \left[g(k_1,\dots ,\widehat{k_r},\dots,k_{n-1},k_n+1)\right]\\
		+ (-1)^{n-1}\prod_{s=1}^{n-1} \Qid_{k_s} \left[g(k_1,\dots,k_{n-1})\right].
	\end{multline*}
\end{lemma}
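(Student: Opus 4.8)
The plan is to prove Lemma~\ref{lem:SumOpAlt} by reducing it to Lemma~\ref{lem:SumOpNormal}, exploiting the fact that the alternative summation operator $\AQsum{n-1}{n}$ differs from the standard one only in the treatment of the last variable~$l_{n-1}$, which is now bounded above by $k_n$ but only with the Iverson weight $[k_{n-1}<l_{n-1}]$ rather than $[k_{n-1}<l_{n-1}<k_n]$. The starting point is the operator identities \eqref{def:SumOp} and \eqref{def:AltSumOp}: the former has $n$ copies of the $Q$-strict operator $\QStrict$, the latter only $n-1$. So the first step is to isolate, inside $\AQsum{n-1}{n}$, the effect of the absent operator $\QStrict_{k_n^{(1)},k_n^{(2)}}$. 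Concretely, one writes the inner sum over $l_{n-1}$ from $k_{n-1}^{(2)}$ to $k_n^{(1)}$ and notes that the telescoping $\prod_i \fd_{l_i}$ applied to $g$, once the sum over $l_{n-1}$ is carried out as $\sum_{l_{n-1}=k_{n-1}^{(2)}}^{k_n^{(1)}} \fd_{l_{n-1}}[\cdots] = (\E_{k_n^{(1)}} - \E_{k_{n-1}^{(2)}})[\cdots]$, splits the expression into a ``$k_n$-boundary term'' and a ``$k_{n-1}$-boundary term''.

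The second step is to handle these two pieces separately. For the $k_n$-boundary term, the structure is exactly that of Lemma~\ref{lem:SumOpNormal} applied to an $(n-1)$-variable problem with top boundary $k_n^{(1)}$ incremented suitably: after evaluating, one should recognize $g(k_1,\dots,k_{n-1},k_n+1)$ with the shift coming from the fact that $\sum_{l_{n-1}}^{k_n}$ telescopes to a value at $k_n+1$ rather than $k_n$ (this is the origin of the ``$k_n+1$'' argument in the claimed formula). Summing the contributions over which of $k_1,\dots,k_{n-1}$ is the ``missing'' variable reproduces the first sum $Q\sum_{r=1}^{n-1}(-1)^{r-1}\prod_{s<r}\Qid_{k_s}\prod_{t>r}\QE_{k_t}[g(\dots,\widehat{k_r},\dots,k_n+1)]$; the overall factor $Q$ comes from the normalisation $Q^{-1}$ in \eqref{def:SumOp} being absent in \eqref{def:AltSumOp}, together with the $\QStrict$-count bookkeeping. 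For the $k_{n-1}$-boundary term, which picks up the value at $l_{n-1}=k_{n-1}$, the remaining $\QStrict_{k_{n-1}^{(1)},k_{n-1}^{(2)}}$ acts and, after the substitution $k_j^{(1)}=k_j^{(2)}=k_j$, collapses: this should give precisely the last term $(-1)^{n-1}\prod_{s=1}^{n-1}\Qid_{k_s}[g(k_1,\dots,k_{n-1})]$. The hypothesis that $\QStrict_{l_i,l_{i+1}}[g]|_{l_i=l_{i+1}+1}$ vanishes is what guarantees, as in Lemma~\ref{lem:SumOpNormal}, that no cross terms survive when the inner summation ranges collide; it must be invoked at each adjacent pair.

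The cleanest way to organize this is probably induction on $n$, peeling off the outermost $\QStrict$ and summation and quoting \cite[Lemma 1]{Fis10} (equivalently Lemma~\ref{lem:SumOpNormal}) for the inner $(n-1)$-fold part, rather than manipulating all $n-1$ operators simultaneously. Since the paper already states that Lemma~\ref{lem:SumOpNormal} is ``a corollary of \cite[Lemma 1]{Fis10}'' and Lemma~\ref{lem:SumOpAlt} is ``a variation thereof,'' the intended proof is surely short: identify the single structural change (free top boundary on $l_{n-1}$), redo the corresponding step of the Fischer computation with that boundary, and collect terms. The main obstacle I anticipate is purely bookkeeping: tracking the powers of $Q$ and the sign $(-1)^{n-1}$ correctly through the telescoping and the operator substitutions, and making sure the shift $k_n\mapsto k_n+1$ appears in exactly the right argument slot. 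A secondary subtlety is verifying that the vanishing hypothesis on $g$ is genuinely sufficient here even though the last summation variable behaves differently — one should check that the pair $(l_{n-2},l_{n-1})$ still only requires $\QStrict_{l_{n-2},l_{n-1}}[g]|_{l_{n-2}=l_{n-1}+1}=0$, which is among the stated hypotheses, so no extra condition is needed.
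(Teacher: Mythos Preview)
Your proposal is correct and matches the paper's proof essentially line for line: telescope the outermost sum over $l_{n-1}$ to split into a $k_n$-boundary term (which becomes $Q\cdot\Qsum{n-2}{n-1}$ and is handled by Lemma~\ref{lem:SumOpNormal}) and a $k_{n-1}$-boundary term (which, after repeated telescoping and use of the vanishing hypothesis at each adjacent pair, collapses to $(-1)^{n-1}\prod_s\Qid_{k_s}[g(k_1,\dots,k_{n-1})]$). The only cosmetic difference is that the paper spells out the iteration for the second term explicitly rather than phrasing it as an induction on $n$.
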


\begin{proof}[Proof of Lemma~\ref{lem:SumOpAlt}]
	First, we use Equation~\eqref{def:AltSumOp} to create telescoping sums:	
	\begin{multline*}
		\AQsum{n-1}{n}\prod_{i=1}^{n-1}\fd_{l_i} \left[g(l_1,\dots,l_{n-1})\right] \\
		\shoveleft = \left.\left(\QStrict_{k_{1}^{(1)},k_{1}^{(2)}} \cdots \QStrict_{k_{n-1}^{(1)},k_{n-1}^{(2)}} \sum_{l_{1}=k_{1}^{(2)}}^{k_{2}^{(1)}} \cdots \hspace{-1ex} \sum_{l_{n-1}=k_{n-1}^{(2)}}^{k_{n}^{(1)}} \prod_{i=1}^{n-1}\fd_{l_i} \left[g(l_1,\dots,l_{n-1})\right] \right) \right|_{k_{i}^{(1)}=k_{i}^{(2)}=k_{i}} \\
		\shoveleft = \left.\left(\QStrict_{k_{1}^{(1)},k_{1}^{(2)}} \cdots \QStrict_{k_{n-1}^{(1)},k_{n-1}^{(2)}} \sum_{l_{1}=k_{1}^{(2)}}^{k_{2}^{(1)}} \cdots \hspace{-1ex} \sum_{l_{n-2}=k_{n-2}^{(2)}}^{k_{n-1}^{(1)}} \prod_{i=1}^{n-2}\fd_{l_i} \right.\right.\\
		\left.\left. \vphantom{\sum_{l_{n-2}=k_{n-2}^{(2)}}^{k_{n-1}^{(1)}}}  \left[g(l_1,\dots,l_{n-2},k_{n}^{(1)}+1) - g(l_1,\dots,l_{n-2},k_{n-1}^{(2)})\right] \right) \right|_{k_{i}^{(1)}=k_{i}^{(2)}=k_{i}}\\
		\shoveleft = Q \Qsum{n-2}{n-1}\prod_{i=1}^{n-2}\fd_{l_i} \left[g(l_1,\dots,l_{n-2},k_n+1)\right]\\
		- \left.\left(\QStrict_{k_{1}^{(1)},k_{1}^{(2)}} \cdots \QStrict_{k_{n-1}^{(1)},k_{n-1}^{(2)}} \sum_{l_{1}=k_{1}^{(2)}}^{k_{2}^{(1)}} \cdots \hspace{-1ex} \sum_{l_{n-2}=k_{n-2}^{(2)}}^{k_{n-1}^{(1)}} \prod_{i=1}^{n-2}\fd_{l_i} \left[g(l_1,\dots,l_{n-2},k_{n-1}^{(2)})\right] \right) \right|_{k_{i}^{(1)}=k_{i}^{(2)}=k_{i}}.
	\end{multline*}
	Using Lemma~\ref{lem:SumOpNormal}, it still remains to be shown that
	\begin{multline*}
		- \left.\left(\QStrict_{k_{1}^{(1)},k_{1}^{(2)}} \cdots \QStrict_{k_{n-1}^{(1)},k_{n-1}^{(2)}} \sum_{l_{1}=k_{1}^{(2)}}^{k_{2}^{(1)}} \cdots \hspace{-1ex} \sum_{l_{n-2}=k_{n-2}^{(2)}}^{k_{n-1}^{(1)}} \prod_{i=1}^{n-2}\fd_{l_i} \left[g(l_1,\dots,l_{n-2},k_{n-1}^{(2)}) \right]\right) \right|_{k_{i}^{(1)}=k_{i}^{(2)}=k_{i}} \\
		= (-1)^{n-1}\prod_{s=1}^{n-1} \Qid_{k_s} \left[g(k_1,\dots,k_{n-1})\right].
	\end{multline*}
	Again by exploiting telescoping sums, we obtain
	\begin{multline*}
		- \left.\left(\QStrict_{k_{1}^{(1)},k_{1}^{(2)}} \cdots \QStrict_{k_{n-1}^{(1)},k_{n-1}^{(2)}} \sum_{l_{1}=k_{1}^{(2)}}^{k_{2}^{(1)}} \cdots \hspace{-1ex} \sum_{l_{n-2}=k_{n-2}^{(2)}}^{k_{n-1}^{(1)}} \prod_{i=1}^{n-2}\fd_{l_i} \left[g(l_1,\dots,l_{n-2},k_{n-1}^{(2)})\right] \right) \right|_{k_{i}^{(1)}=k_{i}^{(2)}=k_{i}} \\
		\shoveleft = - \left.\left(\QStrict_{k_{1}^{(1)},k_{1}^{(2)}} \cdots \QStrict_{k_{n-2}^{(1)},k_{n-2}^{(2)}} \sum_{l_{1}=k_{1}^{(2)}}^{k_{2}^{(1)}} \cdots \hspace{-1ex} \sum_{l_{n-3}=k_{n-3}^{(2)}}^{k_{n-2}^{(1)}} \prod_{i=1}^{n-3}\fd_{l_i} \QStrict_{k_{n-1}^{(1)},k_{n-1}^{(2)}} \right.\right.\\
		\left.\left. \vphantom{\sum_{l_{n-3}=k_{n-3}^{(2)}}^{k_{n-2}^{(1)}}}   \left[g(l_1,\dots,l_{n-3},k_{n-1}^{(1)}+1,k_{n-1}^{(2)}) - g(l_1,\dots,l_{n-3},k_{(n-2)}^{(2)},k_{n-1}^{(2)})\right] \right) \right|_{k_{i}^{(1)}=k_{i}^{(2)}=k_{i}}.
	\end{multline*}
	Since $\QStrict_{k_{n-1}^{(1)},k_{n-1}^{(2)}} \left[g(l_1,\dots,l_{n-3},k_{n-1}^{(1)}+1,k_{n-1}^{(2)})\right]$ vanishes by assumption and $\QStrict_{x,y}$ reduces to $\Qid_y$ when applied to functions independent of $x$, the expression above simplifies to
	\begin{multline*}
		\left.\left(\QStrict_{k_{1}^{(1)},k_{1}^{(2)}} \cdots \QStrict_{k_{n-2}^{(1)},k_{n-2}^{(2)}} \Qid_{k_{n-1}}^{(2)} \sum_{l_{1}=k_{1}^{(2)}}^{k_{2}^{(1)}} \cdots \hspace{-1ex} \sum_{l_{n-3}=k_{n-3}^{(2)}}^{k_{n-2}^{(1)}} \prod_{i=1}^{n-3}\fd_{l_i} \right.\right.\\
		\left.\left. \vphantom{\sum_{l_{n-2}=k_{n-2}^{(2)}}^{k_{n-1}^{(1)}}}   \left[g(l_1,\dots,l_{n-3},k_{(n-2)}^{(2)},k_{n-1}^{(2)})\right] \right) \right|_{k_{i}^{(1)}=k_{i}^{(2)}=k_{i}}.
	\end{multline*}
	We complete the proof by repeating the last step $n-2$ times.
\end{proof}
Our task is now to apply these previous lemmata to the polynomials appearing in Theorem~\ref{thm:QHMTenumeration}. In order to do this, we define the operator $\T_{x,y} \coloneqq \E_x + \E_y - (2-Q)\E_x \E_y$, which is a simple modification of $\QStrict_{x,y}$.
\begin{lemma}\label{lem:AppSumOdd}
	The following operator formula holds:
	\begin{multline*}
		\Qsum{n-1}{n} \prod_{r=1}^{n-1} \Qid_{l_r} \prod_{1\leq s<t\leq n-1} \QStrict_{l_t,l_s} \T_{l_s,l_t} \left[ \det_{1\leq i,j \leq n-1} \left( \binom{l_i+i+j-b-\left(n-1\right)-2}{2j-1} \right) \right]\\	
		= \prod_{1\leq s<t\leq n} \QStrict_{k_t,k_s} \T_{k_s,k_t} \left[ \det_{1\leq i,j \leq n} \left( \binom{k_i+i+j-b-n-2}{2j-2} \right) \right].	
	\end{multline*}
\end{lemma}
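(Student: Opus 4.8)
The strategy is to apply Lemma~\ref{lem:SumOpNormal} with $n-1$ playing the role of $n$ in that lemma, taking
\[
g(l_1,\dots,l_{n-1}) \coloneqq \prod_{r=1}^{n-1} \Qid_{l_r} \prod_{1\leq s<t\leq n-1} \QStrict_{l_t,l_s} \T_{l_s,l_t} \left[ \det_{1\leq i,j \leq n-1} \left( \binom{l_i+i+j-b-(n-1)-2}{2j-1} \right) \right].
\]
However, the left-hand side as written has $\Qfd_{l_r}$-type structure hidden inside the $\Qid$ factors rather than bare $\fd_{l_r}$'s, so the first step is to rewrite things: I would split $\Qid_{l_r} = (Q-1)\E_{l_r} + \id = \fd_{l_r} + Q\,\id$ is \emph{not} directly what is needed; instead I expect the intended manipulation is that the full operand in Theorem~\ref{thm:QHMTenumeration} for the penultimate row already contains the product $\prod \fd_{l_i}$ implicitly via the structure of $\QHMT_{n-1}$, or that one absorbs one factor of $\fd$ per variable out of the determinant. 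The cleanest route: observe that $\det_{1\le i,j\le n-1}\binom{l_i+i+j-b-(n-1)-2}{2j-1}$, by column operations within the determinant, can be written as $\prod_{i}\fd_{l_i}$ applied to a closely related polynomial $\tilde g$, because $\binom{x+1}{2j-1} - \binom{x}{2j-1} = \binom{x}{2j-2}$, i.e. $\fd_x$ lowers the lower binomial index by one. This converts the $2j-1$ determinant on the left into a $\prod_i \fd_{l_i}$ acting on the $2j-2$ determinant, which is exactly the shape of determinant appearing on the right-hand side. So the real content is: rewrite the LHS operand as $\prod_r\Qid_{l_r}\prod_{s<t}\QStrict_{l_t,l_s}\T_{l_s,l_t}\prod_i\fd_{l_i}[\tilde g]$, pull the $\prod_i\fd_{l_i}$ to the front (all these operators commute since they act on disjoint or symmetrically-treated variables), and then apply Lemma~\ref{lem:SumOpNormal}.

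\textbf{Verifying the hypothesis of Lemma~\ref{lem:SumOpNormal}.} Before applying the lemma I must check that $\left.\QStrict_{l_i,l_{i+1}}[\tilde g]\right|_{l_i = l_{i+1}+1}$ vanishes for each $i$, where $\tilde g$ is the polynomial obtained after the rewriting (with its own $\QStrict$ and $\T$ factors and the $2j-2$ determinant). This is the standard "exactness" condition: the $\QStrict_{l_t,l_s}$ factors (in particular the one with $\{s,t\}=\{i,i+1\}$) combined with the antisymmetry-like properties of the determinant force the vanishing. Concretely, $\QStrict_{l_i,l_{i+1}}$ commutes past all the other operator factors (those not involving $l_i,l_{i+1}$, and the one involving both), and applying it to the determinant and then substituting $l_i=l_{i+1}+1$ should collapse two rows/columns or otherwise annihilate the expression because of the specific binomial identities. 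This is a finite, mechanical check; I would do it once for the pair $(i,i+1)$ and note it is identical for all pairs by the symmetry of the construction in the indices.

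\textbf{Matching the output to the RHS.} Lemma~\ref{lem:SumOpNormal} then produces $\sum_{r=1}^{n}(-1)^{r-1}\prod_{s=1}^{r-1}\Qid_{k_s}\prod_{t=r+1}^{n}\QE_{k_t}[\tilde g(k_1,\dots,\widehat{k_r},\dots,k_n)]$. The task is to recognize this alternating sum as $\prod_{1\le s<t\le n}\QStrict_{k_t,k_s}\T_{k_s,k_t}\left[\det_{1\le i,j\le n}\binom{k_i+i+j-b-n-2}{2j-2}\right]$. I would do this by Laplace expansion of the target $n\times n$ determinant along some row (or by the standard identity expressing a determinant whose entries are shifts as an alternating sum over which variable is "deleted"): the $n\times n$ determinant equals, up to the $\prod\Qid/\prod\QE$ dressing that distributes the index shift, the signed sum over $r$ of the corresponding $(n-1)\times(n-1)$ minor in the variables $k_1,\dots,\widehat{k_r},\dots,k_n$. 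The $\prod_{s<t}\QStrict\,\T$ factors on both sides must be checked to be compatible: on the RHS we have the full product over $1\le s<t\le n$, and after deleting index $r$ and applying Lemma~\ref{lem:SumOpNormal} we get the product over $1\le s<t\le n$, $s,t\neq r$, of $\QStrict\,\T$ acting on the minor, together with precisely the extra $\prod_{s<r}\Qid_{k_s}\prod_{t>r}\QE_{k_t}$ needed — because $\QStrict_{k_t,k_s}$ and $\T_{k_s,k_t}$ degenerate to $\Qid$ and $\QE$ respectively when one of the two variables is absent from the operand. So the bookkeeping of which operator degenerates to $\Qid$ versus $\QE$ when index $r$ is deleted is exactly what generates the sign $(-1)^{r-1}$ and the $\Qid/\QE$ prefixes in Lemma~\ref{lem:SumOpNormal}.

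\textbf{Main obstacle.} The hardest part is the last step: the precise combinatorial identity matching the alternating sum of dressed $(n-1)\times(n-1)$ minors to the single dressed $n\times n$ determinant, keeping track of signs and of the asymmetry between $\Qid$ (for the "left" operators, coming from $\T_{k_s,k_t}$ with $t$ deleted or $\QStrict_{k_t,k_s}$ with $t$ deleted) and $\QE$ (for the "right" operators). The binomial shift $\binom{x+1}{2j-1}-\binom{x}{2j-1}=\binom{x}{2j-2}$ that converts the $2j-1$ determinant into $\prod\fd$ of the $2j-2$ determinant is the technical linchpin and must be set up carefully so the index ranges of $i$ and $j$ and the value $b+(n-1)+2$ versus $b+n+2$ line up correctly after the deletion of a variable. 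I expect the proof to be a page or so of determinant manipulation once these identifications are pinned down, with Lemma~\ref{lem:SumOpNormal} doing the essential summation work.
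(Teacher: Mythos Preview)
Your overall strategy is the same as the paper's: write the summand as $\prod_i\fd_{l_i}[g]$, verify the hypothesis of Lemma~\ref{lem:SumOpNormal}, apply that lemma, and then identify the resulting alternating sum with a Laplace expansion of the $n\times n$ determinant on the right. However, two concrete details in your outline are wrong and would derail the computation if carried out as written.

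First, the direction of the forward difference is reversed in your choice of $\tilde g$. Since $\fd_x\binom{x}{m}=\binom{x}{m-1}$, to obtain the $\binom{\cdot}{2j-1}$ determinant on the left as $\prod_i\fd_{l_i}[\tilde g]$ you need $\tilde g$ to carry the $\binom{\cdot}{2j}$ determinant (of size $n-1$), not the $\binom{\cdot}{2j-2}$ determinant. The $\binom{\cdot}{2j-2}$ determinant on the right-hand side is of size $n$, and it is connected to the size-$(n-1)$, lower-index-$2j$ determinant only \emph{after} the Laplace expansion along the first column (whose entries are $\binom{\cdot}{0}=1$), together with the shift $\prod_{t>r}\E_{k_t}$ that accounts for the index offset between $i$ and $i'$ in the minor.

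Second, your claimed degenerations are not right: when the operand is independent of one variable, $\T$ always reduces to $\Qid$, never to $\QE$; and $\QStrict_{x,y}$ reduces to $(Q-1)\E_x^{-1}+\id$ (not $\Qid$) while $\QStrict_{y,x}$ reduces to $\Qid_x$. The factors $\QE_{k_t}$ in the output of Lemma~\ref{lem:SumOpNormal} are matched not by a single degenerated $\T$ but by combining the $\E_{k_t}$ coming from the Laplace-expansion index shift with the $(Q-1)\E_{k_t}^{-1}+\id$ coming from the degenerated $\QStrict_{k_t,k_r}$, via $\QE_{k_t}=\E_{k_t}\bigl((Q-1)\E_{k_t}^{-1}+\id\bigr)$; the remaining $\Qid_{k_u}$ factors from inside $g$ then pair up with the $\Qid$'s from Lemma~\ref{lem:SumOpNormal} and from the other degenerated operators.

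Finally, for the hypothesis of Lemma~\ref{lem:SumOpNormal} the paper does not do a mechanical check but gives a one-line argument you should use: the operator $\QStrict_{l_i,l_{i+1}}\prod_r\Qid_{l_r}\prod_{s<t}\QStrict_{l_t,l_s}\T_{l_s,l_t}$ is symmetric in $l_i,l_{i+1}$, while $\E_{l_i}$ applied to the $\binom{\cdot}{2j}$ determinant is antisymmetric in $l_i,l_{i+1}$ (it swaps rows $i$ and $i+1$); hence $\E_{l_i}\QStrict_{l_i,l_{i+1}}[g]$ is antisymmetric and vanishes at $l_i=l_{i+1}$, which is exactly the vanishing of $\QStrict_{l_i,l_{i+1}}[g]$ at $l_i=l_{i+1}+1$.
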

\begin{proof}[Proof of Lemma~\ref{lem:AppSumOdd}]
	We want to use Lemma~\ref{lem:SumOpNormal}. Therefore, we set
	\begin{equation*}
		g(l_1,\dots,l_{n-1}) \coloneqq \prod_{r=1}^{n-1} \Qid_{l_r} \prod_{1\leq s<t\leq n-1} \QStrict_{l_t,l_s} \T_{l_s,l_t} \left[ \det_{1\leq i,j \leq n-1} \left( \binom{l_i+i+j-b-\left(n-1\right)-2}{2j} \right) \right].
	\end{equation*}
	Note that the operator
	\begin{equation*}
		\QStrict_{l_{i},l_{i+1}}\prod_{r=1}^{n-1} \Qid_{l_r} \prod_{1\leq s<t\leq n-1} \QStrict_{l_t,l_s} \T_{l_s,l_t}
	\end{equation*}
	is symmetric in $l_{i}$ and $l_{i+1}$ and that the polynomial
	\begin{equation*}
		\E_{l_{i}} \left[ \det_{1\leq i,j \leq n-1} \left( \binom{l_i+i+j-b-\left(n-1\right)-2}{2j} \right) \right]
	\end{equation*}
	is antisymmetric in $l_{i}$ and $l_{i+1}$. Consequently, the polynomial $\E_{l_{i}} \QStrict_{l_{i},l_{i+1}} \left[g(l_1,\dots,l_{n-1})\right]$ is also antisymmetric in $l_{i}$ and $l_{i+1}$ and, thus, divisible by the factor $l_{i+1}-l_{i}$. It follows that $g(l_1,\dots,l_{n-1})$ fulfils the requirements of Lemma~\ref{lem:SumOpNormal}. 
	
	The trick of the proof is the following observation: Suppose $f(x)$ is a function that is independent of $y$. Then the following operator expressions simplify: $\QStrict_{x,y} \left[f(x)\right] = ((Q-1)\E_x^{-1}+\id) \left[f(x)\right]$ and $\QStrict_{y,x} \left[f(x)\right] = \T_{x,y} \left[f(x)\right] = \T_{y,x} \left[f(x)\right] = \Qid_x \left[f(x)\right]$. By using the fact that $\fd_x \left[ \binom{x}{n} \right] = {\binom{x}{n-1}}$, we obtain
	\begin{multline*}
		\Qsum{n-1}{n} \prod_{r=1}^{n-1} \Qid_{l_r} \prod_{1\leq s<t\leq n-1} \QStrict_{l_t,l_s} \T_{l_s,l_t} \left[ \det_{1\leq i,j \leq n-1} \left( \binom{l_i+i+j-b-\left(n-1\right)-2}{2j-1} \right) \right]\\	
		\shoveleft = \Qsum{n-1}{n}\prod_{i=1}^{n-1}\fd_{l_i} \left[g(l_1,\dots,l_{n-1})\right]
		= \sum_{r=1}^{n} (-1)^{r-1}\prod_{s=1}^{r-1} \Qid_{k_s}\prod_{t=r+1}^{n} \QE_{k_t} \left(g(k_1,\dots ,\widehat{k_r},\dots ,k_{n})\right)\\
		\shoveleft = \sum_{r=1}^{n} (-1)^{r-1} \prod_{s=1}^{r-1} \Qid_{k_s} \prod_{t=r+1}^{n} \QE_{k_t} \prod_{\substack{u=1, \\ u \neq r}}^{n-1} \Qid_{k_u} \prod_{\substack{1\leq s<t\leq n-1, \\ s,t \neq r}} \QStrict_{k_t,k_s} \T_{k_s,k_t}\\
		\left[ \left. \det_{1\leq i,j \leq n-1} \left( \binom{l_i+i+j-b-\left(n-1\right)-2}{2j} \right) \right|_{(l_1,\dots,l_{n-1}) = (k_1,\dots ,\widehat{k_r},\dots ,k_{n})} \right]\\
		\shoveleft = \sum_{r=1}^{n} (-1)^{r-1}  \prod_{t=r+1}^{n} \E_{k_t}  \prod_{1\leq s<t\leq n-1} \QStrict_{k_t,k_s} \T_{k_s,k_t}\\
		\left[ \left. \det_{1\leq i,j \leq n-1} \left( \binom{l_i+i+j-b-\left(n-1\right)-2}{2j} \right) \right|_{(l_1,\dots,l_{n-1}) = (k_1,\dots ,\widehat{k_r},\dots ,k_{n})} \right].
	\end{multline*}
	In the last step, we use the fact that $\QE_{x} = \E_x ((Q-1)\E_x^{-1}+\id)$. Finally, we consider the determinant evaluation
	\begin{multline*}
		\det_{1\leq i,j \leq n} \left( \binom{k_i+i+j-b-n-2}{2j-2} \right)\\
		= \sum_{r=1}^{n} (-1)^{r-1} \prod_{t=r+1}^{n} \E_{k_t}  \left[ \left. \det_{1\leq i,j \leq n-1} \left( \binom{l_i+i+j-b-\left(n-1\right)-2}{2j} \right) \right|_{(l_1,\dots,l_{n-1}) = (k_1,\dots ,\widehat{k_r},\dots ,k_{n})} \right],
	\end{multline*}
	where we expand the determinant with respect to the first column. This completes the proof.
\end{proof}

\begin{lemma}\label{lem:AppSumEven}
	The following operator formula holds:
	\begin{multline*}
		\AQbsum{n} \prod_{1\leq s<t\leq n} \QStrict_{l_t,l_s} \T_{l_s,l_t} \left[ \det_{1\leq i,j \leq n} \left( \binom{l_i+i+j-b-n-2}{2j-2} \right) \right]\\	
		= (-1)^{n}\prod_{r=1}^{n} \Qid_{k_r} \prod_{1\leq s<t\leq n}  \QStrict_{k_t,k_s} \T_{k_s,k_t} \left[ \det_{1\leq i,j \leq n} \left( \binom{k_i+i+j-b-n-2}{2j-1} \right) \right].	
	\end{multline*}
\end{lemma}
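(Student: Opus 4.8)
The plan is to mirror the proof of Lemma~\ref{lem:AppSumOdd}, but using the alternative summation operator $\AQsum{n-1}{n}$ together with Lemma~\ref{lem:SumOpAlt} instead of Lemma~\ref{lem:SumOpNormal}. First I would set
\begin{equation*}
	g(l_1,\dots,l_n) \coloneqq \prod_{1\leq s<t\leq n} \QStrict_{l_t,l_s} \T_{l_s,l_t} \left[ \det_{1\leq i,j \leq n} \left( \binom{l_i+i+j-b-n-2}{2j-1} \right) \right],
\end{equation*}
and check that $g$ fulfils the hypothesis of Lemma~\ref{lem:SumOpAlt}: as in the proof of Lemma~\ref{lem:AppSumOdd}, the operator $\QStrict_{l_i,l_{i+1}} \prod_{1\leq s<t\leq n} \QStrict_{l_t,l_s} \T_{l_s,l_t}$ is symmetric in $l_i$ and $l_{i+1}$ while $\E_{l_i}$ applied to the determinant is antisymmetric in $l_i$ and $l_{i+1}$, so $\E_{l_i} \QStrict_{l_i,l_{i+1}} \left[g\right]$ is antisymmetric in $l_i$ and $l_{i+1}$, hence divisible by $l_{i+1}-l_i$, which gives the required vanishing after evaluating at $l_i = l_{i+1}+1$. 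Using $\fd_x \left[ \binom{x}{n} \right] = \binom{x}{n-1}$, the operand $\det_{1\leq i,j \leq n} \left( \binom{l_i+i+j-b-n-2}{2j-1} \right)$ is obtained from $\det_{1\leq i,j \leq n} \left( \binom{l_i+i+j-b-n-2}{2j} \right)$ by applying $\prod_{i=1}^n \fd_{l_i}$, so the left-hand side becomes $\AQsum{n-1}{n} \prod_{i=1}^n \fd_{l_i} \left[ \widetilde{g}(l_1,\dots,l_n) \right]$ where $\widetilde{g}$ is $g$ with the exponent $2j-1$ replaced by $2j$.

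Next I would invoke Lemma~\ref{lem:SumOpAlt}. This yields two kinds of terms. The first is a sum over $r = 1,\dots,n$ of $\pm Q \prod_{s=1}^{r-1} \Qid_{k_s} \prod_{t=r+1}^n \QE_{k_t}$ applied to $\widetilde{g}$ with $l_i$ replaced by the list $(k_1,\dots,\widehat{k_r},\dots,k_{n-1},k_{n+1}+1)$ — wait, here I must be careful with the index shift: with $n$ replaced by $n+1$ in Lemma~\ref{lem:SumOpAlt} (since we are summing $n$ variables $l_1,\dots,l_n$ bounded between $k_1,\dots,k_n,b$), the first batch of terms runs over $r = 1,\dots,n$ and substitutes the argument list $(k_1,\dots,\widehat{k_r},\dots,k_n,b+1)$; the second, extra term is $(-1)^n \prod_{s=1}^n \Qid_{k_s} \left[\widetilde{g}(k_1,\dots,k_n)\right]$. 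As in Lemma~\ref{lem:AppSumOdd}, I would simplify the operator prefactors using the identities $\QStrict_{x,y}\left[f(x)\right] = ((Q-1)\E_x^{-1}+\id)\left[f(x)\right]$, $\QStrict_{y,x}\left[f(x)\right] = \T_{x,y}\left[f(x)\right] = \Qid_x\left[f(x)\right]$ for $f$ independent of $y$, and $\QE_x = \E_x((Q-1)\E_x^{-1}+\id)$, so that the $\Qid$ and $\QE$ operators get absorbed into the remaining $\QStrict$/$\T$ product, leaving only pure $\E_{k_t}$ shifts in front. At that stage the two batches of terms should recombine into a single Laplace expansion: one expands $\det_{1\leq i,j\leq n}\left(\binom{k_i+i+j-b-n-2}{2j-1}\right)$ along a suitable column, the $r$-sum producing the cofactor expansion and the extra $(-1)^n\prod \Qid_{k_s}$ term accounting for the "$b$" column that gets shifted in from the bound on $l_n$. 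Pulling $(-1)^n \prod_{r=1}^n \Qid_{k_r}$ and $\prod_{1\leq s<t\leq n} \QStrict_{k_t,k_s}\T_{k_s,k_t}$ out front then gives exactly the right-hand side.

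The main obstacle I anticipate is the bookkeeping in the recombination step: unlike the odd case, where the single $r$-sum directly matched a column expansion, here I have a $Q$-weighted $r$-sum over $n$ terms whose arguments involve $b+1$ in the last slot, plus a separate "boundary" term, and I need to show these assemble into the determinant $\det_{1\leq i,j\leq n}\left(\binom{k_i+i+j-b-n-2}{2j-1}\right)$ after the operators $\QStrict_{k_t,k_s}\T_{k_s,k_t}$ and $\Qid_{k_r}$ are applied. Concretely, the determinant $\det(\binom{\cdot}{2j-1})$ does not arise from a clean column expansion of $\det(\binom{\cdot}{2j})$ the way $\det(\binom{\cdot}{2j-2})$ did in Lemma~\ref{lem:AppSumOdd}; instead one needs an identity of the shape
\begin{equation*}
	\det_{1\leq i,j\leq n}\left(\binom{k_i+i+j-b-n-2}{2j-1}\right) = \sum_{r=1}^n (-1)^{r-1} \prod_{t=r+1}^n \E_{k_t}\left[\left.\det_{1\leq i,j\leq n-1}\left(\binom{l_i+i+j-b-n-1}{2j}\right)\right|_{(l)=(k_1,\dots,\widehat{k_r},\dots,k_n,b+1)}\right] + (\text{boundary}),
\end{equation*}
which I would derive by a telescoping/column-manipulation argument on the binomial entries together with the fact that $\binom{x}{2j-1}$ "jumps" between the two parities. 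Verifying the signs $(-1)^n$ and the factor $Q$ — and checking that the boundary term's operator prefactor $\prod_{s=1}^n\Qid_{k_s}$ is consistent with the uniform $(-1)^n\prod_{r=1}^n\Qid_{k_r}$ on the right — is where the care is needed; everything else is a direct transcription of the structure already established for the odd case.
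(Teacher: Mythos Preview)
Your setup is right: you define $g$ with the lower index $2j-1$, verify the hypothesis of Lemma~\ref{lem:SumOpAlt} by the same symmetry/antisymmetry argument as in Lemma~\ref{lem:AppSumOdd}, and then apply Lemma~\ref{lem:SumOpAlt} to obtain the $Q$-weighted $r$-sum (with $b+1$ in the last slot) plus the boundary term $(-1)^n\prod_{s=1}^n\Qid_{k_s}[g(k_1,\dots,k_n)]$. Up to a small indexing slip (the left-hand side carries $\det\bigl(\binom{\cdot}{2j-2}\bigr)$, so it equals $\AQbsum{n}\prod_i\fd_{l_i}[g]$ with $g$ having $2j-1$; your extra $\widetilde g$ with $2j$ is one step too many), this part matches the paper.

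The genuine gap is in what comes next. You expect both batches of terms to survive and to be recombined by a Laplace-type identity into the target determinant. In the paper the mechanism is entirely different and much simpler: the boundary term $(-1)^n\prod_{s=1}^n\Qid_{k_s}[g(k_1,\dots,k_n)]$ is \emph{already} the right-hand side verbatim, and the whole $Q$-weighted $r$-sum \emph{vanishes}. The vanishing comes from showing that $g(b;l_1,\dots,l_{n-1},b+1)=0$. This is the nontrivial step you are missing, and the recombination identity you are hunting for does not exist in the form you anticipate. To prove $g|_{l_n=b+1}=0$, the paper rewrites
\[
\QStrict_{l_n,l_i}\T_{l_i,l_n}=\bigl(\id-(4-2Q)\E_{l_i}+(5-4Q+Q^2)\E_{l_i}^2\bigr)+\E_{l_i}\bigl(\id-(2-Q)\E_{l_i}\bigr)\bigl(\E_{l_n}+\E_{l_n}^{-1}\bigr),
\]
so that all dependence on $l_n$ enters through powers of $\E_{l_n}+\E_{l_n}^{-1}$; after expressing the determinant via \eqref{eq:HMTDetEven} as an odd polynomial in $(b+1-l_n)$, the required vanishing follows from the elementary identity $\sum_{r=0}^{s}\binom{s}{r}(s-2r)^{2t+1}=0$. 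Without this observation, your proposed ``bookkeeping in the recombination step'' cannot be completed.
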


\begin{proof}[Proof of Lemma~\ref{lem:AppSumEven}]
	This lemma is proved by means of Lemma~\ref{lem:SumOpAlt}. Therefore, we define
	\begin{equation*}
		g(b;l_1,\dots,l_{n}) \coloneqq \prod_{1\leq s<t\leq n} \QStrict_{l_t,l_s} \T_{l_s,l_t} \left[ \det_{1\leq i,j \leq n} \left( \binom{l_i+i+j-b-n-2}{2j-1} \right) \right].
	\end{equation*}
	Similarly as in the proof of Lemma~\ref{lem:AppSumOdd}, we can show that $g(b;l_1,\dots,l_{n})$ fulfils the conditions of Lemma~\ref{lem:SumOpAlt}. Consequently, we obtain
	\begin{multline*}
		\AQbsum{n} \prod_{1\leq s<t\leq n} \QStrict_{l_t,l_s} \T_{l_s,l_t} \left[ \det_{1\leq i,j \leq n} \left( \binom{l_i+i+j-b-n-2}{2j-2} \right) \right]\\
		=Q \sum_{r=1}^{n} (-1)^{r-1}\prod_{s=1}^{r-1} \Qid_{k_s}\prod_{t=r+1}^{n} \QE_{k_t} \left[g(b;k_1,\dots ,\widehat{k_r},\dots,k_{n},b+1)\right]\\
		+ (-1)^{n} \prod_{s=1}^{n} \Qid_{k_s} \left[g(b;k_1,\dots,k_{n})\right].
	\end{multline*}
	The proof reduces to showing that the first summand of the right-hand side vanishes. It suffices to establish that
	\begin{equation}\label{eq:AuxGZero}
		\left. g(b;l_1,\dots,l_{n}) \right|_{l_{n}=b+1}=0.
	\end{equation}
	We will use the following identity
	\begin{equation}\label{eq:AuxBinomial}
	\sum_{r=0}^{s}\binom{s}{r}(s-2r)^{2t+1}=0,
	\end{equation}
	which holds true for any nonnegative integers~$s$ and $t$. This follows from
	\begin{equation*}
		\sum_{r=0}^{s}\binom{s}{r}(s-2r)^{2t+1}
		=\sum_{r=0}^{s}\binom{s}{s-r}(s-2(s-r))^{2t+1}
		=-\sum_{r=0}^{s}\binom{s}{r}(s-2r)^{2t+1}.
	\end{equation*}
	Some manipulation yields
	\begin{equation*}
		\QStrict_{l_{n},l_i}\T_{l_i,l_{n}} = \left(\id-\left(4-2Q\right)\E_{l_i} + \left(5-4Q+Q^2\right)\E_{l_i}^2\right) + \E_{l_i} \left(\id-(2-Q)\E_{l_i}\right) \left(\E_{l_{n}} + \E_{l_{n}}^{-1}\right).
	\end{equation*}
	By Equation~\eqref{eq:HMTDetEven}, it is enough to show that we have the following identity for any nonnegative integer~$s$:
	\begin{equation}\label{eq:ComEquation}
	\left. \left( \E_{l_{n}} + \E_{l_{n}}^{-1} \right)^s \left[ \left(b+1-l_{n}\right) \prod_{i=1}^{n-1} \left( l_{n}-l_i+n-i \right) \left( 2b+n+2-l_{n}-l_i-i \right) \right] \right|_{l_{n}=b+1}=0.
	\end{equation}
	But since the left-hand side of \eqref{eq:ComEquation} is equivalent to
	\begin{multline*}
		\left. \sum_{r=0}^s \binom{s}{r} \E_{l_{n}}^{2r-s} \left[ \left(b+1-l_{n}\right) \prod_{i=1}^{n-1} \left( l_{n}-l_i+n-i \right) \left( 2b+n+2-l_{n}-l_i-i \right) \right] \right|_{l_{n}=b+1}\\
		= \sum_{r=0}^s \binom{s}{r} (s-2r) \prod_{i=1}^{n-1} \left( \left( b+n+1-l_i-i \right)^2 - (s-2r)^2 \right),
	\end{multline*}
	Equation~\eqref{eq:ComEquation} follows from \eqref{eq:AuxBinomial}. Consequently, Equation~\eqref{eq:AuxGZero} holds true.
	
\end{proof}

Finally, Theorem \ref{thm:QHMTenumeration} follows by a simple induction on $n$ using Lemmata~\ref{lem:AppSumOdd} and \ref{lem:AppSumEven} as well as Equations~\eqref{eq:HMTDetOdd} and \eqref{eq:HMTDetEven}.

\subsection{Proof of Theorem~\ref{thm:QHTREEenumeration}}

Fischer showed how to use the forward difference operator to enumerate truncated monotone triangles \cite{Fis11}. We generalise her ideas to the weighted enumeration of halved trees.

Note that the operator~$\Qfd_x$ is equivalent to $\fd_x \Qid_x^{-1}$. Since $\QStrict_{x,y}$ simplifies to $\Qid_y$ when applied to a function independent of $x$, we obtain by using \eqref{def:SumOp} that
\begin{multline*}
-\Qfd_{k_1} \Qsum{n-1}{n} f(l_1, l_2, \dots,l_{n-1})\\
\shoveleft = - \fd_{k_1} \Qid_{k_1}^{-1} \\
 \left. \times Q^{-1} \left( \Qid_{k_{1}^{(2)}} \QStrict_{k_{2}^{(1)},k_{2}^{(2)}} \cdots \QStrict_{k_{n}^{(1)},k_{n}^{(2)}}  \left[ \sum_{l_{1}=k_{1}^{(2)}}^{k_{2}^{(1)}} \cdots \hspace{-1ex} \sum_{l_{n-1}=k_{n-1}^{(2)}}^{k_{n}^{(1)}} f(l_1, \dots, l_{n-1}) \right] \right) \right|_{k_{i}^{(1)}=k_{i}^{(2)}=k_{i}}\\
= -\fd_{k_1} Q^{-1} \left.\left( \QStrict_{k_{2}^{(1)},k_{2}^{(2)}} \cdots \QStrict_{k_{n}^{(1)},k_{n}^{(2)}} \left[ \sum_{l_{1}=k_{1}^{(2)}}^{k_{2}^{(1)}} \cdots \hspace{-1ex} \sum_{l_{n-1}=k_{n-1}^{(2)}}^{k_{n}^{(1)}} f(l_1, \dots, l_{n-1}) \right] \right) \right|_{k_{i}^{(1)}=k_{i}^{(2)}=k_{i}}.
\end{multline*}
The next step follows from Fischer's crucial observation \cite{Fis11} that the application of the operator $-\fd_{k_1}$ has the effect of truncating the leftmost entry in the bottom row of the pattern and setting $k_1$ as the leftmost entry in the penultimate row. Thus, the previous expression is equal to
\begin{equation*}
Q^{-1} \left.\left( \QStrict_{k_{2}^{(1)},k_{2}^{(2)}} \cdots \QStrict_{k_{n}^{(1)},k_{n}^{(2)}} \left[ \sum_{l_{2} = k_{2}^{(2)}}^{k_{3}^{(1)}} \cdots \hspace{-1ex} \sum_{l_{n-1}=k_{n-1}^{(2)}}^{k_{n}^{(1)}} f(k_1,l_2, \dots, l_{n-1}) \right] \right) \right|_{k_{i}^{(1)}=k_{i}^{(2)}=k_{i}}.
\end{equation*}
We conclude that
\begin{multline*}
-\Qfd_{k_1} \Qsum{n-1}{n} f(l_1, l_2, \dots,l_{n-1})\\
= \sum_{\substack{ l_2 < \dots < l_{n-1}, \\ k_2 \leq l_2 \leq k_3 \leq \dots \leq k_{n-1} \leq l_{n-1} \leq k_n}} \hspace*{-1ex} Q^{[k_{2} < l_{2} < k_{3}]+\dots+[k_{n-1} < l_{n-1} < k_{n}]} f(k_1, l_2, \dots, l_{n-1}).
\end{multline*}
Hence, if we apply the operator $\prod_{r=1}^{\lceil n/2 \rceil} \left( -\Qfd_{k_r} \right)^{s_r}$, we truncate the $s_i$ bottom entries from the $i^{\text{th}}$ diagonal for each $1 \le i \le \lceil n/2 \rceil$.

\subsection{Proof of Theorem~\ref{thm:QHTREEConstantTerm}}

In the following proof, we present a method for transforming an operator formula into a constant term identity. This method can also be applied to other operator formulae that involve the same operands, for example Theorem~\ref{thm:VSASTPQCEnumeration}.

We assume that $n$ is odd. Then the following holds:
\begin{multline*}
	\prod_{r=1}^{\frac{n+1}{2}} ( -\Qfd_{k_r} )^{s_r} \prod_{1\leq s<t\leq \frac{n+1}{2}} \left(\E_{k_s}+\E_{k_t}^{-1}-(2-Q)\E_{k_s}\E_{k_t}^{-1}\right) \left(\E_{k_s}+\E_{k_t}-(2-Q)\E_{k_s}\E_{k_t}\right)\\
	\left[ \prod_{1\leq i<j\leq \frac{n+1}{2}} \frac{(k_j-k_i+j-i)(2b+n+2-k_i-k_j-i-j)}{(j-i)(i+j-1)} \right]\\
	\shoveleft = \ct_{x_1,\dots,x_{\frac{n+1}{2}}} \left( \prod_{r=1}^{\frac{n+1}{2}} \E_{x_r}^{k_r} ( -\Qfd_{x_r} )^{s_r}\right. \\
	\times \prod_{1\leq s<t\leq \frac{n+1}{2}} \left(\E_{x_s}+\E_{x_t}^{-1}-(2-Q)\E_{x_s}\E_{x_t}^{-1}\right) \left(\E_{x_s}+\E_{x_t}-(2-Q)\E_{x_s}\E_{x_t}\right) \\
	\left. \left[ \prod_{1\leq i<j\leq \frac{n+1}{2}} \frac{(x_j-x_i+j-i)(2b+n+2-x_i-x_j-i-j)}{(j-i)(i+j-1)} \right] \right).
\end{multline*}
Since $\fd_x=\E_x-\id$, the expression above is equal to
	\begin{multline}\label{eq:ConstantTerm1}
	\ct_{x_1,\dots,x_{\frac{n+1}{2}}} \left( \prod_{r=1}^{\frac{n+1}{2}} (\id + \fd_{x_r})^{k_r} \left(-\fd_{x_r}\left(Q\id-(1-Q)\fd_{x_r}\right)^{-1}\right)^{s_r} \right.\\
	\times \prod_{1\leq s<t\leq \frac{n+1}{2}} \left(\E_{x_t}^{-1} \left( Q\id + (Q-1)\fd_{x_s} + \fd_{x_t} + \fd_{x_s} \fd_{x_t} \right)\right.\\
	\left. \vphantom{\E_{x_t}^{-1}} \times \left( Q\id + (Q-1)\fd_{x_s} + (Q-1)\fd_{x_t} + (Q-2)\fd_{x_s}\fd_{x_t} \right)\right)\\
	\left. \left[ \prod_{1\leq i<j\leq \frac{n+1}{2}} \frac{(x_j-x_i+j-i)(2b+n+2-x_i-x_j-i-j)}{(j-i)(i+j-1)} \right] \right).
	\end{multline}
	Replacing $n$ by $(n+1)/2$ and then setting $k_i = x_i -b- n/2$ in Equation~\eqref{eq:DetFormulaeOdd} yields the following determinant identity:
	\begin{multline*}
	 \prod_{1\leq s<t\leq \frac{n+1}{2}} \E_{x_t}^{-1} \left[ \prod_{1\leq i<j\leq \frac{n+1}{2}}   \frac{(x_j-x_i+j-i)(2b+n+2-x_i-x_j-i-j)}{(j-i)(i+j-1)} \right]\\
	= \prod_{1\leq i<j\leq \frac{n+1}{2}} \frac{(x_j-x_i)(2b+n-x_j-x_i)}{(j-i)(j+i-1)}\\
	= (-1)^{\binom{\frac{n+1}{2}}{2}} \det_{1 \leq i,j \leq \frac{n+1}{2}} \left( \binom{x_i+j-b-\frac{n+3}{2}}{2j-2} \right).
	\end{multline*}
	By using the previous determinant evaluation and the Leibniz formula, we obtain that \eqref{eq:ConstantTerm1} is equal to
	\begin{multline}\label{eq:ConstantTerm2}
(-1)^{\binom{\frac{n+1}{2}}{2}} \ct_{x_1,\dots,x_{\frac{n+1}{2}}} \left( \sum_{\sigma \in \mathfrak{S}_{\frac{n+1}{2}}} \sgn (\sigma) \prod_{r=1}^{\frac{n+1}{2}} (\id + \fd_{x_r})^{k_r} \left(-\fd_{x_r}\left(Q\id-(1-Q)\fd_{x_r}\right)^{-1}\right)^{s_r}\right.\\
	\times \prod_{1\leq s<t\leq \frac{n+1}{2}} \left( Q\id + (Q-1)\fd_{x_s} + \fd_{x_t} + \fd_{x_s} \fd_{x_t} \right) \left( Q\id + (Q-1)\fd_{x_s} + (Q-1)\fd_{x_t} + (Q-2)\fd_{x_s}\fd_{x_t} \right)\\
	\left. \left[ \prod_{i=1}^{\frac{n+1}{2}} {\binom{x_i + \sigma(i) - b -\frac{n+3}{2}}{2\sigma(i) - 2}} \right] \right) \\
\shoveleft = (-1)^{\binom{\frac{n+1}{2}}{2}} \ct_{x_1,\dots,x_{\frac{n+1}{2}}}  \left( \sum_{\sigma \in \mathfrak{S}_{\frac{n+1}{2}}} \sgn (\sigma) \prod_{r=1}^{\frac{n+1}{2}} (\id + \fd_{x_r})^{k_r} \left(-\fd_{x_r}\left(Q\id-(1-Q)\fd_{x_r}\right)^{-1}\right)^{s_r}\right.\\
	\times \prod_{1\leq s<t\leq \frac{n+1}{2}} \left( Q\id + (Q-1)\fd_{x_s} + \fd_{x_t} + \fd_{x_s} \fd_{x_t} \right) \left( Q\id + (Q-1)\fd_{x_s} + (Q-1)\fd_{x_t} + (Q-2)\fd_{x_s}\fd_{x_t} \right)\\
	\left. \times \prod_{i=1}^{\frac{n+1}{2}} \left( \id + \fd_{x_i} \right)^{\sigma(i) - b -\frac{n+3}{2}} \left[ {\binom{x_i}{2\sigma(i) - 2}} \right] \right). 
	\end{multline}
	Since $\ct_x \left( \fd_x^s \left[ \binom{x}{t} \right] \right) = \delta_{s,t}$ holds true, \eqref{eq:ConstantTerm2} is equal to 
	\begin{multline}\label{eq:ConstantTerm3}
	(-1)^{\binom{\frac{n+1}{2}}{2}} \sum_{\sigma \in \mathfrak{S}_{\frac{n+1}{2}}} \sgn (\sigma) \left< x_1^{2\sigma(1) - 2} \cdots x_{\frac{n+1}{2}}^{2\sigma\left(\frac{n+1}{2}\right) - 2} \right> \left( \prod_{r=1}^{\frac{n+1}{2}} (1 + x_r)^{k_r + \sigma(r) - b - \frac{n+3}{2}} \left(-\frac{x_r}{Q-(1-Q) x_r}\right)^{s_r} \right. \\
	\left. \times \prod_{1\leq s<t\leq \frac{n+1}{2}} \left( Q + (Q-1) x_s + x_t + x_s x_t \right) \left( Q + (Q-1)x_s + (Q-1)x_t + (Q-2)x_s x_t \right) \right) \\
	= (-1)^{\binom{\frac{n+1}{2}}{2}} \ct_{x_1,\dots,x_{\frac{n+1}{2}}}  \left( \sum_{\sigma \in \mathfrak{S}_{\frac{n+1}{2}}} \sgn (\sigma) \prod_{r=1}^{\frac{n+1}{2}} x_r^{2-2\sigma(r)} (1 + x_r)^{k_r + \sigma(r) - b - \frac{n+3}{2}} \left(-\frac{x_r}{Q-(1-Q) x_r}\right)^{s_r} \right. \\
	\left. \times \prod_{1\leq s<t\leq \frac{n+1}{2}} \left( Q + (Q-1) x_s + x_t + x_s x_t \right) \left( Q + (Q-1)x_s + (Q-1)x_t + (Q-2)x_s x_t \right) \right),
	\end{multline}
	where $\left< x_1^{2\sigma(1)-2} \cdots x_{(n+1)/2}^{2\sigma((n+1)/2)-2}\right>$ denotes the coefficient of $x_1^{2\sigma(1)-2} \cdots x_{(n+1)/2}^{2\sigma((n+1)/2)-2}$ in the subsequent expression.

	By the generalised Vandermonde determinant evaluation \cite[Proposition 1]{Kra01}, the following identity holds true:
	\begin{multline*}
	\sum_{\sigma \in \mathfrak{S}_{\frac{n+1}{2}}} \sgn (\sigma) \prod_{r=1}^{\frac{n+1}{2}} x_r^{n+1-2\sigma(r)} \left( 1+x_r \right)^{\sigma(r) - 1} = \det_{1 \leq i,j \leq \frac{n+1}{2}} \left( x_i^{n+1-2j} \left( 1+x_i \right)^{j - 1} \right) \\
	= (-1)^{\binom{\frac{n+1}{2}}{2}} \prod_{1\leq s<t\leq \frac{n+1}{2}} \left( x_t - x_s \right) \left( x_s + x_t + x_s x_t \right).
	\end{multline*}
	Therefore, we can conclude that \eqref{eq:ConstantTerm3} is equal to
	\begin{multline*}
	\ct_{x_1,\dots,x_{\frac{n+1}{2}}} \left( \prod_{r=1}^{\frac{n+1}{2}} x_r^{1-n} (1+x_r)^{k_r -  b -\frac{n+1}{2}} \left(-\frac{x_r}{Q-(1-Q) x_r}\right)^{s_r}  \right.\\
	\times \prod_{1\leq s<t\leq \frac{n+1}{2}} \left( x_t - x_s\right) \left( x_s + x_t + x_s x_t \right)\\
	\left. \vphantom{\prod_{r=1}^{\frac{n+1}{2}}} \times \left( Q + (Q-1)x_s + x_t + x_s x_t \right) \left( Q + (Q-1)x_s + (Q-1)x_t + (Q-2)x_s x_t \right) \right).
	\end{multline*} 
	The case for even $n$ is treated similarly.

\subsection{Proof of Theorem~\ref{thm:HMTPQEnumeration}}

For the next proof, we essentially use the observation of Theorem~\ref{thm:QHTREEenumeration} that the application of the (generalised) forward difference operator has the effect of truncating entries of the diagonals.
	If the two bottommost entries in the $i^\text{th}$ diagonal of the halved tree are equal, we can truncate the bottommost entry of this diagonal which is reflected in the operator $-\Qfd_{k_i}$.
	
	If the two bottommost entries in the $i^\text{th}$ diagonal are not the same, we can count all halved trees and subtract those whose bottommost entries in the $i^\text{th}$ diagonal are equal. Thus, we need to apply the operator $\id - (-\Qfd_{k_i})$.

\subsection{Proof of Theorem~\ref{thm:VSASTQCEnumeration}}

As already noted in Section~\ref{sec:EnumAST}, Theorem~\ref{thm:VSASTQCEnumeration} follows from Theorem~\ref{thm:HMTPQEnumeration} using the bijection stated in Proposition~\ref{pro:Bijection} between vertically symmetric alternating sign trapezoids and halved trees.

\subsection{Proof of Theorem~\ref{thm:VSASTPQCEnumeration}}

To obtain the generating function of vertically symmetric alternating sign trapezoids, the key idea is to use Theorem~\ref{thm:VSASTQCEnumeration} and sum over all possible positions of 10-columns. Since
	\begin{multline*}
		Q^{-1} \bd_x \left(\id + \Qfd_x\right) = Q^{-1} \bd_x \left( \id + \fd_x \left(\id - (1-Q)\E_x\right)^{-1} \right)\\
		= Q^{-1} \bd_x \left(\left(\id-(1-Q)\E_x\right)+\left(\E_x-\id\right)\right)\left(\id - (1-Q)\E_x\right)^{-1} = \bd_x \E_x \left(\id - (1-Q)\E_x\right)^{-1} = \Qfd_x,
	\end{multline*} 
	we can manipulate the generating function~\eqref{eq:VSASTQCEnumeration} as follows:
	\begin{multline*}
		\prod_{c_i \in C_{10}} \left( -\Qfd_{c_i} \right) \prod_{\substack{1 \leq i \leq \frac{n}{2}, \\ c_i \notin C_{10}}} \left( \id + \Qfd_{c_i} \right) \prod_{r=1}^{\frac{n}{2}} \left( -\Qfd_{c_r} \right)^{-c_r-1} \QHMT_{n-1} \left( \frac{l-5}{2};\mathbf{c} \right)\\
		\shoveleft = \prod_{c_i \in C_{10}} \left( \id + \Qfd_{c_i} \right) \left( - Q^{-1} \bd_{c_i} \right) \prod_{\substack{1 \leq i \leq \frac{n}{2}, \\ c_i \notin C_{10}}} \left( \id + \Qfd_{c_i} \right)\\
		\times \prod_{r=1}^{\frac{n}{2}} \left( \left( \id + \Qfd_{c_i} \right) \left( - Q^{-1} \bd_{c_i} \right) \right)^{-c_r-1} \QHMT_{n-1} \left( \frac{l-5}{2};\mathbf{c} \right)\\ 
		= \prod_{c_i \in C_{10}} \left( - Q^{-1} \bd_{c_i} \right) \prod_{r=1}^{\frac{n}{2}} \left( \id + \Qfd_{c_i} \right)^{-c_r} \left( - Q^{-1} \bd_{c_i} \right)^{-c_r-1} \QHMT_{n-1} \left( \frac{l-5}{2};\mathbf{c} \right).
	\end{multline*}
	We want to sum over all possible positions of 10-columns. For this purpose, we make use of the elementary symmetric function: The generating function of vertically symmetric alternating sign trapezoids with $p$ many 10-columns is 
	\begin{equation*}
		e_p \left( - Q^{-1} \bd_{c_1}, \dots, - Q^{-1} \bd_{c_\frac{n}{2}} \right) \prod_{r=1}^{\frac{n}{2}} \left( \id + \Qfd_{c_r} \right)^{-c_r} \left( - Q^{-1} \bd_{c_r} \right)^{-c_r-1} \QHMT_{n-1} \left( \frac{l-5}{2};\mathbf{c} \right)
	\end{equation*}
	where $e_p$ denotes the $p^{\text{th}}$ elementary symmetric function. Since $e_p \left( x_1,\dots,x_m \right)$ is the coefficient of $P^p$ in $\prod_{i=1}^m \left( 1+ P x_i \right)$, the $PQ$-generating function is
	\begin{multline*}
		\prod_{r=1}^{\frac{n}{2}} \left( \id - P Q^{-1} \bd_{c_r} \right) \left(  \id + \Qfd_{c_r} \right)^{-c_r} \left( - Q^{-1} \bd_{c_r} \right)^{-c_r-1} \QHMT_{n-1} \left( \frac{l-5}{2};\mathbf{c} \right) \\
		= \prod_{r=1}^{\frac{n}{2}} \left( \id - (P-1)\Qfd_{c_r} \right) \left( - \Qfd_{c_r} \right)^{-c_r-1} \QHMT_{n-1} \left( \frac{l-5}{2};\mathbf{c} \right).
	\end{multline*}
	The transformation into a constant term identity is analogous to the proof of Theorem~\ref{thm:QHTREEConstantTerm}.

\subsection{Proof of Theorem~\ref{thm:VSASTPQEnumeration}}

Let $\mathfrak{S}_m$ be the symmetric group of degree $m$.
\begin{definition}
	The \emph{symmetriser} $\sym$ of a function $f(x_1,\dots,x_m)$ is defined as 
	\begin{equation*}
	\sym_{x_1,\dots,x_m} f(x_1,\dots,x_m) \coloneqq \sum_{\sigma\in\mathfrak{S}_m} f(x_{\sigma(1)},\dots,x_{\sigma(m)});
	\end{equation*}
	the \emph{antisymmetriser} $\asym$ is given by
	\begin{equation*}
	\asym_{x_1,\dots,x_m} f(x_1,\dots,x_m) \coloneqq \sum_{\sigma\in\mathfrak{S}_m} \sgn(\sigma) f(x_{\sigma(1)},\dots,x_{\sigma(m)}).
	\end{equation*}
\end{definition}

We use the symmetriser in combination with the following lemma, which Zeilberger called the Stanton-Stembridge trick \cite[Crucial Fact $\aleph_4$]{Zei96a}.

\begin{lemma}
	For a formal Laurent series $f(x_1,\dots,x_m)$ and a permutation $\sigma \in \mathfrak{S}_m$,
	\begin{equation*}
	\ct_{x_1,\dots,x_m} \left(f(x_{\sigma(1)},\dots,x_{\sigma(m)})\right)=\ct_{x_1,\dots,x_m} \left(f(x_1,\dots,x_m)\right).
	\end{equation*}
\end{lemma}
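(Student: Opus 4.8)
The plan is to unwind the constant term to a coefficient extraction and then observe that the relabelling $x_i\mapsto x_{\sigma(i)}$ merely permutes the monomial basis while fixing the monomial $x_1^0\cdots x_m^0$.

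Concretely, I would write $f(x_1,\dots,x_m)=\sum_{\mathbf{a}\in\mathbb{Z}^m} c_{\mathbf{a}}\,x_1^{a_1}\cdots x_m^{a_m}$ for the expansion of the formal Laurent series, so that by definition $\ct_{x_1,\dots,x_m}\bigl(f(x_1,\dots,x_m)\bigr)=c_{(0,\dots,0)}$. The substitution $x_i\mapsto x_{\sigma(i)}$ sends $x_1^{a_1}\cdots x_m^{a_m}$ to $x_{\sigma(1)}^{a_1}\cdots x_{\sigma(m)}^{a_m}=x_1^{a_{\sigma^{-1}(1)}}\cdots x_m^{a_{\sigma^{-1}(m)}}$, hence
\begin{equation*}
f(x_{\sigma(1)},\dots,x_{\sigma(m)})=\sum_{\mathbf{a}\in\mathbb{Z}^m} c_{\mathbf{a}}\,x_1^{a_{\sigma^{-1}(1)}}\cdots x_m^{a_{\sigma^{-1}(m)}}.
\end{equation*}
Since $\mathbf{a}\mapsto(a_{\sigma^{-1}(1)},\dots,a_{\sigma^{-1}(m)})$ is a bijection of $\mathbb{Z}^m$, this is merely a reindexing of the same family of monomials. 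In particular, the coefficient of $x_1^0\cdots x_m^0$ on the right-hand side is $c_{\mathbf{a}}$ for the unique $\mathbf{a}$ with $a_{\sigma^{-1}(i)}=0$ for all $i$, namely $\mathbf{a}=(0,\dots,0)$; thus it equals $c_{(0,\dots,0)}$, which is exactly $\ct_{x_1,\dots,x_m}\bigl(f(x_1,\dots,x_m)\bigr)$, and the claim follows.

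The only point deserving a word of justification is that $x_i\mapsto x_{\sigma(i)}$ is well defined on the ambient ring of formal Laurent series in which the paper's constant-term identities are interpreted — but this is immediate, since the substitution is induced by the variable-relabelling ring automorphism and involves no limiting process, so the monomial-by-monomial argument above is legitimate. I therefore do not anticipate any genuine obstacle here: the statement is essentially a bookkeeping identity, and once the constant term is rewritten as a coefficient, the proof reduces to a single reindexing step.
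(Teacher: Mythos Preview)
Your argument is correct: extracting the constant term as the coefficient $c_{(0,\dots,0)}$ and noting that the relabelling $x_i\mapsto x_{\sigma(i)}$ fixes the zero exponent vector is exactly the right observation, and there is no hidden subtlety.

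As for comparison, the paper does not actually prove this lemma at all; it merely states it and attributes it to Zeilberger (the ``Stanton--Stembridge trick,'' Crucial Fact~$\aleph_4$ in \cite{Zei96a}). So your write-up supplies a self-contained justification where the paper is content with a citation.
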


As a consequence, it follows that
\begin{equation}\label{eq:SymMethod}
\ct_{x_1,\dots,x_m} \left(f(x_1,\dots,x_m)\right) = \ct_{x_1,\dots,x_m} \left( \frac{1}{m!} \sym_{x_1,\dots,x_m} f(x_1,\dots,x_m) \right).
\end{equation}
To prove Theorem~\ref{thm:VSASTPQEnumeration}, we need the following lemma. It is a generalisation of \cite[Lemma 9]{Fis19}.
\begin{lemma}\label{lem:QASym}
The following identity holds true:
\begin{multline}\label{eq:QASym}
		\asym_{x_1,\dots,x_m} \left( \prod_{r=1}^{m} \frac{\left( \frac{x_r(1+x_r)}{Q+x_r} \right)^{r-1}}{1-\prod_{j=r}^{m} \frac{x_j(1+x_j)}{Q+x_j} } \prod_{1\leq s<t\leq m} ( Q + (Q-1) x_s + x_t + x_s x_t ) \right)\\
		= \prod_{r=1}^{m} \frac{Q+x_r}{Q - x_r^2} \prod_{1\leq s<t\leq m} \frac{(Q(1+x_s)(1+x_t)-x_s x_t)(x_t-x_s)}{Q-x_s x_t}.
\end{multline}	
\end{lemma}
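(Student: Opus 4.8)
The plan is to adapt Fischer's proof of the analogous identity \cite[Lemma~9]{Fis19}, carrying the parameter~$Q$ through every step. Write $y_i \coloneqq \frac{x_i(1+x_i)}{Q+x_i}$ and $P(x_s,x_t) \coloneqq Q+(Q-1)x_s+x_t+x_sx_t$, and record the factorisations $P(x_s,x_t)=(1+x_s)(Q+x_t)-x_s$, $1-y_i=\frac{Q-x_i^2}{Q+x_i}$ and $1-y_sy_t=\frac{(Q-x_sx_t)(Q+x_s+x_t+x_sx_t)}{(Q+x_s)(Q+x_t)}$. The second of these identifies $\prod_{r=1}^m\frac{Q+x_r}{Q-x_r^2}=\prod_{r=1}^m\frac{1}{1-y_r}$, so that the right-hand side of \eqref{eq:QASym} is $\prod_{r=1}^m\frac{1}{1-y_r}$ times the symmetric factor $\prod_{s<t}\frac{Q(1+x_s)(1+x_t)-x_sx_t}{Q-x_sx_t}$ times the Vandermonde product $\prod_{s<t}(x_t-x_s)$; the last factorisation records that $Q-x_sx_t$ is exactly the ``wanted'' branch of the numerator of $1-y_sy_t$. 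In particular both sides are antisymmetric in $x_1,\dots,x_m$, and it is enough to compare them after division by $\prod_{s<t}(x_t-x_s)$, where they become symmetric formal power series in the $x_i$; the case $m=1$ holds trivially since the left-hand side is then $\frac{1}{1-y_1}$.

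I would not argue by induction on~$m$, since the factors $\prod_{j=r}^m y_j$ couple all the variables. Instead, expand each geometric factor, $\frac{1}{1-\prod_{j=r}^m y_j}=\sum_{n_r\ge 0}\bigl(\prod_{j=r}^m y_j\bigr)^{n_r}$. Under the bijection $(n_1,\dots,n_m)\mapsto(e_1,\dots,e_m)$, $e_r=r-1+n_1+\dots+n_r$, between $\mathbb{Z}_{\ge0}^m$ and strictly increasing tuples of nonnegative integers, $\prod_{r}\frac{y_r^{r-1}}{1-\prod_{j\ge r}y_j}$ becomes $\sum_{0\le e_1<\dots<e_m}\prod_r y_r^{e_r}$, so the left-hand side turns into $\sum_{0\le e_1<\dots<e_m}\asym_{x_1,\dots,x_m}\!\bigl(\prod_r y_r^{e_r}\prod_{s<t}P(x_s,x_t)\bigr)$; since only finitely many tuples contribute to each total degree in the~$x_i$, this interchange is legitimate.

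Next I would evaluate the inner antisymmetriser for a fixed increasing tuple. Because $P(x_s,x_t)$ is affine in $x_t$ with coefficients depending only on~$x_s$, one has $\prod_{s<t}P(x_s,x_t)=\prod_s(1+x_s)^{m-s}\prod_t\prod_{s<t}\bigl(x_t+\frac{Q+(Q-1)x_s}{1+x_s}\bigr)$, a polynomial of degree $m-1$ in each variable; expanding it and using multilinearity together with $\asym_{x_1,\dots,x_m}(\prod_r g_r(x_r))=\det_{i,j}(g_i(x_j))$ writes each summand as a signed sum of determinants whose entries are built from powers of $x_j$, $1+x_j$ and $(Q+x_j)^{-1}$, which I would collapse, after clearing the poles at $x_j=-Q$, by a (generalised) Vandermonde-type determinant evaluation in the spirit of the one used in the proof of Theorem~\ref{thm:QHTREEConstantTerm} (compare \cite[Proposition~1]{Kra01}).

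The remaining, and main, difficulty is to resum the resulting series over $(e_1,\dots,e_m)$ into the closed product on the right-hand side. The mechanism behind this is that the apparent singularities at $\prod_{j\in S}y_j=1$ for subsets $S$ with $|S|\ge 3$, together with the spurious branch $Q+x_s+x_t+x_sx_t=0$ of $1-y_sy_t$, all cancel after antisymmetrisation, so that only $\prod_r(Q-x_r^2)$ and $\prod_{s<t}(Q-x_sx_t)$ survive in the denominator; making this cancellation explicit and then identifying the numerator as $\prod_r(Q+x_r)\prod_{s<t}(Q(1+x_s)(1+x_t)-x_sx_t)$ — for instance by bounding the degree of $\asym_{x_1,\dots,x_m}(\cdots)\cdot\prod_r(Q-x_r^2)\prod_{s<t}(Q-x_sx_t)$ and checking a few specialisations, or by a second determinant identity — is where the bulk of the computation lies. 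Since $y_i$ specialises to $x_i$ at $Q=1$, none of the $Q$-dependent factors can be imported from the known $Q=1$ case, so the argument has to be carried out uniformly in~$Q$.
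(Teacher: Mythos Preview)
Your proposal is a sketch, not a proof: you explicitly flag the resummation over increasing tuples $(e_1,\dots,e_m)$ into the closed product as ``where the bulk of the computation lies'' and then do not carry it out. The determinant evaluation you allude to in the previous step is also left unspecified; the factor $\prod_{s<t}P(x_s,x_t)$ is not of Vandermonde type in any obvious set of variables, so the reference to \cite[Proposition~1]{Kra01} does not by itself close that step either.

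More to the point, your reason for discarding induction on $m$ is unfounded. You say that the factors $\prod_{j=r}^m y_j$ couple all the variables, but the only global factor is the one at $r=1$, namely $(1-\prod_{j=1}^m y_j)^{-1}$, and it is \emph{symmetric} in $x_1,\dots,x_m$, so it can be pulled straight through the antisymmetriser. This is exactly what the paper does: writing $f_m(x_1,\dots,x_m)$ for the argument of $\asym$, one checks
\[
f_m(x_1,\dots,x_m)=\Bigl(\prod_{r=2}^m P(x_1,x_r)\,y_r\Bigr)\Bigl(1-\prod_{r=1}^m y_r\Bigr)^{-1} f_{m-1}(x_2,\dots,x_m),
\]
so that, expanding $\asym$ along the position of $x_1$, one obtains an explicit recursion for $\asym_{x_1,\dots,x_m} f_m$ in terms of $\asym_{x_1,\dots,\widehat{x_k},\dots,x_m} f_{m-1}$. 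Verifying that the right-hand side of \eqref{eq:QASym} satisfies the same recursion reduces, after clearing denominators, to a single symmetric polynomial identity, which is then established by matching leading terms and checking the specialisations $x_i=0$, $x_i=-1$, and $x_i=Q(1+x_j)/((1-Q)x_j-Q)$ for $i\neq j$. That argument is short and complete; your series approach, even if it could be pushed through, trades this for a substantially harder combinatorial summation that you have not performed.
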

\begin{proof}[Proof of Lemma~\ref{lem:QASym}]
	We show the identity by induction on $m$; it is proved in a similar way as \cite[Lemma 9]{Fis19} which follows from \eqref{eq:QASym} by setting $Q=1$. 
	
	The base case $m=1$ is clear. We set $f_m(x_1,\dots,x_m)$ to be the argument of the antisymmetriser on the left-hand side of \eqref{eq:QASym}; that is,
	\begin{equation*}
		f_m(x_1,\dots,x_m) \coloneqq \prod_{r=1}^{m} \frac{\left( \frac{x_r(1+x_r)}{Q+x_r} \right)^{r-1}}{1-\prod_{j=r}^{m} \frac{x_j(1+x_j)}{Q+x_j} } \prod_{1\leq s<t\leq m} ( Q + (Q-1) x_s + x_t + x_s x_t ).
	\end{equation*}
	It can readily be seen that $f_m(x_1,\dots,x_m)$ is equal to
	\begin{multline*}
	\left( \prod_{r=2}^m ( Q + (Q-1) x_1 + x_r + x_1 x_r ) \frac{x_r(1+x_r)}{Q+x_r} \right) \left( 1-\prod_{r=1}^m \frac{x_r(1+x_r)}{Q+x_r} \right)^{-1} f_{m-1}(x_2,\dots,x_m).
	\end{multline*}
	By the definition of the antisymmetriser we see that $f_m$ satisfies the following recursion:
	\begin{multline*}
		\asym_{x_1,\dots,x_m} f_{m}(x_1,\dots,x_m) = \left( 1-\prod_{r=1}^m \frac{x_r(1+x_r)}{Q+x_r} \right)^{-1}\\
		\times \sum_{k=1}^m \left( (-1)^{k+1} \prod_{\substack{l=1, \\ l \neq k}}^{m} \frac{x_l(1+x_l)}{Q+x_l} (Q+(Q-1)x_k+x_l+x_k x_l) \right.\\
		\left. \times \vphantom{\prod_{\substack{l=1, \\ l \neq k}}^{m}} \asym_{x_1,\dots,\widehat{x_k},\dots,x_m} f_{m-1}(x_1,\dots,\widehat{x_k},\dots,x_m) \right).	
	\end{multline*}
	We want to show that the right-hand side of \eqref{eq:QASym} fulfils the same recursion. Some manipulation yields that this is equivalent to proving the following polynomial identity:
	\begin{multline*}
		\left( \prod_{r=1}^m (Q+x_r) - \prod_{r=1}^m x_r(1+x_r) \right) \prod_{1 \le s < t \le m} \left( Q(1+x_s)(1+x_t)-x_s x_t \right)\\
		= \sum_{k=1}^m (Q-x_k^2) \prod_{\substack{1 \le s < t \le m, \\ s,t \neq k}} \left( Q(1+x_s)(1+x_t)-x_s x_t \right)\\
		\times \prod_{\substack{1 \le r \le m, \\ r \neq k}} \frac{x_r(1+x_r)(Q-x_k x_r)(Q+(Q-1)x_k+x_r+x_k x_r)}{x_r-x_k}.
	\end{multline*}
	Both sides are symmetric polynomials in ${x_1,\dots,x_m}$, and the leading terms are $-(Q-1)^{\binom{m}{2}} x_1^{m+1} \cdots \allowbreak x_m^{m+1}$. The identity holds true for the evaluations $x_i=0$ and $x_i=-1$, and both sides vanish for $x_i = Q(1+x_j)/((1-Q)x_j-Q)$ for all $i \neq j$. This completes the proof of \eqref{eq:QASym}.
\end{proof}

We are now in a position to prove Theorem~\ref{thm:VSASTPQEnumeration}. To this end, we change the number of the columns from $-(n-2)$ to $0$ instead of from $-(n-1)$ to $-1$; that is, we shift $c_r \mapsto c_r-1$. Then the generating function  of all halved vertically symmetric alternating sign trapezoids with prescribed $1$-columns is equal to the constant term in $x_1$, \dots, $x_{n/2}$ of
	\begin{multline*}
		\prod_{r=1}^{\frac{n}{2}} x_r^{2-n} \left(1+x_r\right)^{c_r-\frac{l-3}{2}-\frac{n}{2}} \left( \frac{Q + (Q-P)x_r}{Q - (1-Q)x_r} \right) \left(-\frac{x_r}{Q - (1-Q)x_r}\right)^{-c_r} \\
		\times \prod_{1\leq s<t\leq \frac{n}{2}} \left( \left( x_t - x_s\right) \left( x_s + x_t + x_s x_t \right) \right.\\
		\left. \times \left( Q + (Q-1)x_s + x_t + x_s x_t \right) \left( Q + (Q-1)x_s + (Q-1)x_t + (Q-2)x_s x_t \right) \right).
	\end{multline*}
	We sum over all possible $1$-column vectors~$\mathbf{c}$ and obtain the constant term in $x_1$, \dots, $x_{n/2}$ of 
	\begin{multline*}
		\sum_{c_1<\dots<c_{\frac{n}{2}}\leq 0} \Bigg( \prod_{r=1}^{\frac{n}{2}} x_r^{2-n} \left(1+x_r\right)^{c_r-\frac{l-3}{2}-\frac{n}{2}} \left( \frac{Q + (Q-P)x_r}{Q - (1-Q)x_r} \right) \left(-\frac{x_r}{Q - (1-Q)x_r}\right)^{-c_r} \\
		\times \prod_{1\leq s<t\leq \frac{n}{2}} \left( \left( x_t - x_s\right) \left( x_s + x_t + x_s x_t \right) \right. \\
		\left. \times \left( Q + (Q-1)x_s + x_t + x_s x_t \right) \left( Q + (Q-1)x_s + (Q-1)x_t + (Q-2)x_s x_t \right) \right) \Bigg),
	\end{multline*}
	which is equal to the constant term in $x_1$, \dots, $x_{n/2}$ of
	\begin{multline}\label{prf:QConstantTerm}
		Q^{-\frac{n}{2}} \prod_{r=1}^{\frac{n}{2}} \frac{x_r^{2-n} (1+x_r)^{-\frac{l-3}{2}-\frac{n}{2}} \left( \frac{Q + (Q-P)x_r}{Q - (1-Q)x_r} \right) \left( \frac{-x_r}{(Q-(1-Q)x_r)(1+x_r)} \right)^{\frac{n}{2}-r}}{1-\prod_{j=1}^{r} \left(\frac{-x_j}{(Q-(1-Q)x_j)(1+x_j)}\right)} \\
		\times \prod_{1\leq s<t\leq \frac{n}{2}} \left( \left( x_t - x_s\right) \left( x_s + x_t + x_s x_t \right) \right. \\
		\left. \times \left( Q + (Q-1)x_s + x_t + x_s x_t \right) \left( Q + (Q-1)x_s + (Q-1)x_t + (Q-2)x_s x_t \right) \right) 
	\end{multline}
	because of the following geometric series expression
	\begin{equation*}
		\sum_{c_1<\dots<c_m\leq 0}y_1^{-c_1} \cdots y_{m}^{-c_m} = \prod_{r=1}^{m} \frac{y_r^{m-r}}{1-\prod_{j=1}^{r} y_j}
	\end{equation*}
	with $y_r=-x_r/((Q-(1-Q)x_r)(1+x_r))$ for all $1 \leq r \leq n/2$.
	
	We set $m=n/2$ in \eqref{eq:SymMethod} and apply it to \eqref{prf:QConstantTerm}. Thus, we obtain the constant term in $x_1$, \dots, $x_{n/2}$ of
	\begin{multline*}
	\frac{1}{\left(\frac{n}{2}\right)!} \sym_{x_1,\dots,x_{\frac{n}{2}}} \left( \prod_{r=1}^{\frac{n}{2}} \frac{x_r^{2-n} (1+x_r)^{-\frac{l-3}{2}-\frac{n}{2}} \left( \frac{Q + (Q-P)x_r}{Q - (1-Q)x_r} \right) \left( \frac{-x_r}{(Q-(1-Q)x_r)(1+x_r)} \right)^{\frac{n}{2}-r}}{1-\prod_{j=1}^{r} \left(\frac{-x_j}{(Q-(1-Q)x_j)(1+x_j)}\right)} \right.\\
	\times \prod_{1\leq s<t\leq \frac{n}{2}} \left( \left( x_t - x_s\right) \left( x_s + x_t + x_s x_t \right) \right. \\
	\left. \times 
	\vphantom{\frac{x_r^{2-n} (1+x_r)^{-\frac{l-3}{2}-\frac{n}{2}} \left( \frac{Q + (Q-P)x_r}{Q - (1-Q)x_r} \right) \left( \frac{-x_r}{(Q-(1-Q)x_r)(1+x_r)} \right)^{\frac{n}{2}-r}}{1-\prod_{j=1}^{r} \left(\frac{-x_j}{(Q-(1-Q)x_j)(1+x_j)}\right)}}
	\left. \left( Q + (Q-1)x_s + x_t + x_s x_t \right) \left( Q + (Q-1)x_s + (Q-1)x_t + (Q-2)x_s x_t \right) \right) \right),
	\end{multline*}
	which is equal to
	\begin{multline}\label{eq:QASymConstLastStep}		
	\frac{1}{\left(\frac{n}{2}\right)!} \prod_{r=1}^{\frac{n}{2}} \frac{ \frac{Q + (Q-P)x_r}{Q - (1-Q)x_r} }{x_r^{n-2}  \left(1+x_r\right)^{\frac{l-3}{2}+\frac{n}{2}}} \\
	\times \prod_{1\leq s<t\leq \frac{n}{2}} \left( x_t - x_s\right) \left( x_s + x_t + x_s x_t \right) \left( Q + (Q-1)x_s + (Q-1)x_t + (Q-2)x_s x_t \right)\\		
	\times \asym_{x_1,\dots,x_{\frac{n}{2}}} \left( \prod_{r=1}^{\frac{n}{2}} \frac{\left( \frac{-x_r}{(Q-(1-Q)x_r)(1+x_r)} \right)^{\frac{n}{2}-r}}{1-\prod_{j=1}^{r} \left( \frac{-x_j}{(Q-(1-Q)x_j)(1+x_j)} \right)} \prod_{1\leq s<t\leq \frac{n}{2}} \left( Q + (Q-1)x_s + x_t + x_s x_t \right) \right).
	\end{multline}
	To simplify the previous expression, we use Lemma~\ref{lem:QASym}. We replace $x_i \mapsto -x_{m+1-i}/(1+x_{m+1-i})$ in Equation~\eqref{eq:QASym} to obtain the following identity:
	\begin{multline}\label{eq:QASymVar}
		\asym_{x_1,\dots,x_m} \left( \prod_{r=1}^{m} \frac{\left( \frac{-x_r}{(Q-(1-Q)x_r)(1+x_r)} \right)^{m-r}}{1-\prod_{j=1}^{r} \left( \frac{-x_j}{(Q-(1-Q)x_j)(1+x_j)} \right)} \prod_{1\leq s<t\leq m} \left( Q + (Q-1) x_s + x_t + x_s x_t \right) \right)\\
		= \prod_{r=1}^{m} \frac{(1+x_r)(Q-(1-Q)x_r)}{Q(1+x_r)^2 - x_r^2} \prod_{1\leq s<t\leq m} \frac{\left(Q-x_s x_t\right) \left(x_t - x_s\right)}{Q(1+x_s)(1+x_t)-x_s x_t}.
	\end{multline}
	To complete the proof of Theorem~\ref{thm:VSASTPQEnumeration}, we finally apply Equation~\eqref{eq:QASymVar} to \eqref{eq:QASymConstLastStep}.	

\appendix

\section{The \texorpdfstring{$2$}{2}-Enumeration of Halved Monotone Triangles}
\label{sec:2enum}

By setting $Q=1$ in the generating function $\QHMT_n\left(b;\mathbf{k}\right)$, we recover the straight enumeration of halved monotone triangles. The \emph{$2$-enumeration} is obtained by setting $Q=2$. It turns out that the $2$-enumeration of halved monotone triangles can be written in an operator-free product formula. This comes as no surprise: The $2$-enumeration of alternating sign matrices had already been solved by Mills, Robbins, and Rumsey \cite{MRR83} whereas the straight $1$-enumeration remained unsolved for over a decade. Lai investigates the $2$-enumeration of so-called \emph{antisymmetric monotone triangles} \cite{Lai} which has apparently been proved before by Jokusch and Propp in an unpublished work. Antisymmetric monotone triangles essentially correspond to halved monotone triangles with no entry larger than $-1$. Lai considers the following \emph{$q$-weight}: It counts all entries that appear in some row but not in the row directly above it. We can recover the $q$-weight of a halved monotone triangle from its $Q$-weight: Consider two consecutive rows of a halved monotone triangle and suppose that the upper row contributes the weight~$Q^m$. If the lower row has the same number of entries, then this row below contributes the weight~$q^m$; if the lower row has one entry more, it contributes the weight~$q^{m+1}$. The top row of a halved monotone triangle always contributes a factor~$q$. In total, a halved monotone triangle of order~$n$ and $Q$-weight~$Q^m$ has $q$-weight~$q^{m+\lfloor n/2 \rfloor}$. This observation implies the following theorem as a corollary of \cite[Theorem 1.1]{Lai}:

\begin{theorem}
	The $2$-enumeration of halved monotone triangles of order~$n$ with prescribed bottom row~$(k_1,\dots,\allowbreak k_{\lceil n/2 \rceil})$ and no entry larger than $b$ is given by
	\begin{equation*}
		4^{\binom{\frac{n+1}{2}}{2}} \prod_{1\le i<j\le \frac{n+1}{2}}\frac{(k_j-k_i)(2b+1-k_i-k_j)}{(j-i)(i+j-1)} 
	\end{equation*}
	if $n$ is odd and by	
	\begin{equation*}
		4^{\binom{\frac{n}{2}}{2}} \prod_{1\le i<j\le \frac{n}{2}}\frac{(k_j-k_i)(2b+1-k_i-k_j)}{(j-i)(i+j)} \prod_{i=1}^{\frac{n}{2}}\frac{2b+1-2k_i}{i}
	\end{equation*}
	if $n$ is even.
\end{theorem}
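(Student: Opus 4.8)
The plan is to deduce the theorem from Lai's evaluation of the $2$-enumeration of antisymmetric monotone triangles \cite[Theorem~1.1]{Lai}, combined with the $q$-weight/$Q$-weight dictionary recorded immediately before the statement. The first ingredient is a reduction of the general parameter $b$ to the case $b=-1$: translating every entry of a halved monotone triangle by $-(b+1)$ is a bijection between halved monotone triangles of order $n$ with bottom row $\mathbf{k}$ and no entry larger than $b$, and halved monotone triangles of the same order with bottom row $\mathbf{k}-(b+1)\mathbf{1}$ and no entry larger than $-1$. This translation changes neither the $Q$-weight nor the $q$-weight, since both are defined through inequalities between neighbouring entries (respectively, through which values appear in a row but not in the row above), not through absolute values. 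Halved monotone triangles with no entry larger than $-1$ are, up to conventions, exactly Lai's antisymmetric monotone triangles, and $\mathbf{k}-(b+1)\mathbf{1}$ becomes his boundary data.

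Next I would translate weights. By the observation preceding the theorem, a halved monotone triangle of order $n$ and $Q$-weight $Q^m$ has $q$-weight $q^{m+\lfloor n/2\rfloor}$, so summing over all halved monotone triangles with the prescribed boundary gives
\[
\left.\QHMT_n(b;\mathbf{k})\right|_{Q=2} = 2^{-\lfloor n/2\rfloor}\,\bigl(\text{$q$-enumeration of Lai's objects at }q=2\text{ with boundary }\mathbf{k}-(b+1)\mathbf{1}\bigr).
\]
Then I would substitute $k_i\mapsto k_i-b-1$ into the $q=2$ specialisation of \cite[Theorem~1.1]{Lai} and simplify: the differences $k_j-k_i$ are invariant, the antidiagonal factors $(2(-1)+1)-k_i-k_j$ become $2b+1-k_i-k_j$, and $(2(-1)+1)-2k_i$ becomes $2b+1-2k_i$. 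Absorbing the constant $2^{-\lfloor n/2\rfloor}$ into the powers of two coming from Lai's formula yields $4^{\binom{(n+1)/2}{2}}$ in the odd case and $4^{\binom{n/2}{2}}$ together with the single product $\prod_{i}(2b+1-2k_i)/i$ in the even case, as claimed; the small sample cases $n=2,3$ confirm the bookkeeping.

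The only genuine obstacle is this last step of constant-counting, and more precisely making sure that Lai's conventions for the shape, the indexing of the boundary row, and the two parity cases match ours exactly; once that matching is set up, both the weight translation and the translation invariance are immediate.

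As a self-contained alternative, one may instead set $Q=2$ directly in Theorem~\ref{thm:QHMTenumeration}. The operators then degenerate to $\QStrict_{x,y}\mapsto\E_x^{-1}+\E_y$, $\T_{x,y}\mapsto\E_x+\E_y$, and $\Qid_x\mapsto\E_x+\id$, and using the determinant evaluations \eqref{eq:DetFormulaeEven} and \eqref{eq:DetFormulaeOdd} of Lemma~\ref{lem:DetFormulae} one checks by induction on the number of variables that $\prod_{s<t}(\E_{k_s}+\E_{k_t}^{-1})(\E_{k_s}+\E_{k_t})$ — together with $\prod_r(\E_{k_r}+\id)$ in the even case — carries the determinant in \eqref{eq:HMTDetOdd} (resp.\ \eqref{eq:HMTDetEven}) to the determinant underlying the $2$-enumeration formula, up to the stated power of $4$. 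This route avoids citing \cite{Lai} but is more computational, so I would present it only as a remark.
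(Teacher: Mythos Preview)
Your main argument is correct and coincides with the paper's: the paper derives the theorem exactly as a corollary of \cite[Theorem~1.1]{Lai} via the $q$-weight/$Q$-weight dictionary and the identification of antisymmetric monotone triangles with halved monotone triangles having no entry larger than $-1$, which is precisely your translation-by-$-(b+1)$ step made explicit. Your self-contained alternative via the $Q=2$ specialisation of Theorem~\ref{thm:QHMTenumeration} is not in the paper and is a reasonable remark, though the inductive operator computation you allude to would need to be spelled out to be convincing.
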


\section{Enumeration of Halved Gelfand-Tsetlin Patterns}
\label{sec:HGTP}

If we weaken the conditions in the definition of halved monotone triangles by allowing rows to be weakly increasing, we obtain \emph{halved Gelfand-Tsetlin patterns}. They are enumerated by the operands in Theorem~\ref{thm:QHMTenumeration}. First, we derive an enumeration formula by means of nonintersecting lattice paths. Then, we encounter two more interpretations of halved Gelfand-Tsetlin patterns, one as lozenge tilings of certain regions and one in terms of representations of the symplectic group.

To enumerate halved Gelfand-Tsetlin patterns by nonintersecting lattice paths, we modify the bijection in~\cite{Fis12} between regular Gelfand-Tsetlin patterns and nonintersecting lattice paths: Given a halved Gelfand-Tsetlin pattern with $n$ rows, bottom row~$\left( k_1, \dots, k_{\lceil n/2 \rceil} \right)$ and no entry larger than $b$, we divide it into $\nearrow$- diagonals and number them from left to right by $1$ to $\lceil n/2 \rceil$. At the right end of each diagonal we put an additional bounding entry~$b$. We add $i-1$ to the entries of the $i^{\text{th}}$ diagonal. These entries are the heights of paths connecting the starting points $\left( -1,k_i+i-1 \right)$ and end points $\left(n+2-2i,b+i-1\right)$ with $(1,0)$- and $(0,1)$-steps. We cut off the first and the last step of each path since they are always horizontal steps, and thus we obtain the starting points $S_i \coloneqq \left( 0,k_i+i-1 \right)$ and the end points $E_i \coloneqq \left( n+1-2i,b+i-1 \right)$.

\begin{figure}[htb]
	\centering
		\raisebox{1ex}{\begin{tikzpicture}[scale=.5]
			\hmt{{1,2,2},{1,2,3},{1,3},{1,3},{2},{2}}
			\node at (9,2.5){$\longleftrightarrow$};
			\end{tikzpicture}
			\begin{tikzpicture}[scale=.5]
			\hmt{{1,3,4,},{1,3,5},{1,4,5},{1,4},{2,4},{2},{3}}
			\draw (0.5,-0.5) -- (6.5,5.5);
			\draw (2.5,-0.5) -- (6.5,3.5);
			\node at (10,2.5){$\longleftrightarrow$};
			\end{tikzpicture}}
		\begin{tikzpicture}[scale=.5]
		\draw [help lines,step=1cm] (-1.75,-.75) grid (6.75,6.75);
		
		\coordinate[label=225:$S_1$] (S1) at (0,1);
		\fill (S1) circle (2pt);
		
		\coordinate[label=225:$S_2$] (S2) at (0,3);
		\fill (S2) circle (2pt);
		
		\coordinate[label=225:$S_3$] (S3) at (0,4);
		\fill (S3) circle (2pt);
		
		\coordinate[label=45:$E_1$] (E1) at (5,3);
		\fill (E1) circle (2pt);
		
		\coordinate[label=45:$E_2$] (E2) at (3,4);
		\fill (E2) circle (2pt);
		
		\coordinate[label=45:$E_3$] (E3) at (1,5);
		\fill (E3) circle (2pt);
		
		\draw[thick] (S1) -- (3,1) -- (3,2) -- (5,2) -- (E1);
		
		\draw[thick] (S2) -- (1,3) -- (1,4) -- (E2);
		
		\draw[thick] (S3) -- (0,5) -- (E3);
		
		\end{tikzpicture}
		\caption{Transforming a halved Gelfand-Tsetlin pattern with no entry larger than $3$ into a family of nonintersecting lattice paths}
		\label{figure:LatticePaths}
\end{figure}

This yields a bijection between halved Gelfand-Tsetlin patterns and families of certain nonintersecting lattice paths; see Figure~\ref{figure:LatticePaths} for an example. Due to the Lindström-Gessel-Viennot theorem, the number of the nonintersecting lattice paths connecting the given start and end points is given by
\begin{equation*}\label{eq:LatticPathsDet}
	\det\limits_{1 \le i,j \le \lceil \frac{n}{2} \rceil}\left( \binom{n+b+1-k_i-i-j}{n+1-2j} \right).
\end{equation*}
By using \cite[Theorem~27]{Kra01}, this determinant evaluates to 
\begin{equation*}
	\prod_{i=1}^{\lceil \frac{n}{2} \rceil} \frac{(b+n-\lceil \frac{n}{2} \rceil +1 -k_i -i)!}{(b+ \lceil \frac{n}{2} \rceil -k_i -i)! (n+1-2i)!} \prod_{1\leq i<j\leq \lceil \frac{n}{2} \rceil} (k_j-k_i+j-i) (2b+n+2 -k_i-k_j -i-j),
\end{equation*}
which is equivalent to \eqref{eq:HMTDetEven} or \eqref{eq:HMTDetOdd} if $n$ is even or odd, respectively.

Halved Gelfand-Tsetlin patterns can also be interpreted as lozenge tilings of so-called \emph{quartered hexagons}: Consider a trapezoidal region in the triangular lattice as displayed in Figure~\ref{figure:QuarteredHexagons}; its left side has length~$s$, its upper and lower parallel sides have length~$t$ and $t+\lceil s/2 \rceil$, respectively, and its right side is a vertical zigzag line of length~$s$. Remove $\lceil s/2 \rceil$ unit triangles from the lower side at positions $d_1, d_2,\dots,d_{\lceil s/2 \rceil}$. We denote this region by $H_{s,t} (d_1, d_2,\dots,d_{\lceil s/2 \rceil})$. 
\begin{figure}[htb]
	\centering
		% n ungerade
		\begin{tikzpicture}[scale=.5]
		
		% Coordinates
		
		\coordinate (1) at (0,0) {};
		\coordinate (2) at ($ (1) + (1,0) $) {};
		\coordinate (3) at ($ (2) + (1,0) $) {};
		\coordinate (4) at ($ (3) + (1,0) $) {};
		\coordinate (5) at ($ (4) + (1,0) $) {};
		\coordinate (6) at ($ (5) + (1,0) $) {};
		\coordinate (7) at ($ (6) + (1,0) $) {};
		\coordinate (8) at ($ (7) + (1,0) $) {};
		\coordinate (9) at ($ (8) + (1,0) $) {};
		\coordinate (10) at ($ (9) + (1,0) $) {};
		
		\coordinate (11) at ($ (1) + (60:1) $) {};
		\coordinate (12) at ($ (11) + (1,0) $) {};
		\coordinate (14) at ($ (12) + (2,0) $) {};
		\coordinate (18) at ($ (14) + (4,0) $) {};
		\coordinate (19) at ($ (18) + (1,0) $) {};
		
		\coordinate (20) at ($ (11) + (60:1) $) {};
		\coordinate (28) at ($ (20) + (8,0) $) {};
		
		\coordinate (29) at ($ (20) + (60:1) $) {};
		\coordinate (36) at ($ (29) + (7,0) $) {};
		
		\coordinate (37) at ($ (29) + (60:1) $) {};
		\coordinate (44) at ($ (37) + (7,0) $) {};
		
		\coordinate (45) at ($ (37) + (60:1) $) {};
		\coordinate (46) at ($ (45) + (1,0) $) {};
		\coordinate (47) at ($ (46) + (1,0) $) {};
		\coordinate (48) at ($ (47) + (1,0) $) {};
		\coordinate (49) at ($ (48) + (1,0) $) {};
		\coordinate (50) at ($ (49) + (1,0) $) {};
		\coordinate (51) at ($ (50) + (1,0) $) {};
		
		% Shape
		
		\draw (1) -- (2) -- (12) -- (3) -- (4) -- (14) -- (5) -- (6);
		\draw [dotted] (6) -- (7);
		\draw (7) -- (8) -- (18) -- (9) -- (10) -- (19) -- (28) -- (36) -- (44) -- (51) -- (45) -- (1);
		
		% Gelfand-Tsetlin pattern
		
		\node at ($ (2) + (.5,-.5) $) {$d_1$};
		\node at ($ (4) + (.5,-.5) $) {$d_2$};
		\node at ($ (8) + (.5,-.5) $) {$d_{\frac{s+1}{2}}$};
		
		% Tiling
		% northeast direction
		
		\draw (12) -- (46);
		\draw (3) -- (47);
		\draw (14) -- (48);
		\draw (5) -- (49);
		\draw (6) -- (50);
		\draw (7) -- (51);
		\draw (18) -- (36);
		\draw (9) -- (19);
		
		% northwest direction
		
		\draw (2) -- (11);
		\draw (12) -- (20);
		\draw (4) -- (29);
		\draw (14) -- (37);
		\draw (6) -- (45);
		\draw (7) -- (46);
		\draw (8) -- (47);
		\draw (18) -- (48);
		\draw (19) -- (49);
		\draw (36) -- (50);
		
		% horizontal direction
		
		\draw (11) --++(8,0);
		\draw (20) --++(8,0);
		\draw (29) --++(7,0);
		\draw (37) --++(7,0);
		
		% Labels
		
		\node at ($ (48) + (0,1) $) {$t$};
		
		\draw [decorate,decoration={brace,amplitude=5pt}] (2.5,4.580) -- (8.5,4.580);
		
		\node at ($ (1,2.165) + (-1,0.25) $) {$s$};
		
		\draw [decorate,decoration={brace,amplitude=5pt},rotate around={60:(1,2.165)},=60] (-1.5,2.165) -- (3.5,2.165);
		
		\node at ($ (4.5,-0.866) + (0,-1) $) {$t+\frac{s+1}{2}$};
		
		\draw [decorate,decoration={brace,amplitude=5pt},rotate around={180:(4.5,-0.866)},=60] (0,-0.866) -- (9,-0.866);
		\end{tikzpicture}
		\qquad
		% n gerade
		\begin{tikzpicture}[scale=.5]
		
		% Coordinates
		
		\coordinate (1) at (0,0) {};
		\coordinate (2) at ($ (1) + (1,0) $) {};
		\coordinate (3) at ($ (2) + (1,0) $) {};
		\coordinate (4) at ($ (3) + (1,0) $) {};
		\coordinate (5) at ($ (4) + (1,0) $) {};
		\coordinate (6) at ($ (5) + (1,0) $) {};
		\coordinate (7) at ($ (6) + (1,0) $) {};
		\coordinate (8) at ($ (7) + (1,0) $) {};
		\coordinate (9) at ($ (8) + (1,0) $) {};
		\coordinate (10) at ($ (9) + (1,0) $) {};
		
		\coordinate (11) at ($ (1) + (60:1) $) {};
		\coordinate (12) at ($ (11) + (1,0) $) {};
		\coordinate (14) at ($ (12) + (2,0) $) {};
		\coordinate (18) at ($ (14) + (4,0) $) {};
		\coordinate (20) at ($ (18) + (2,0) $) {};
		
		\coordinate (21) at ($ (11) + (60:1) $) {};
		\coordinate (29) at ($ (21) + (8,0) $) {};
		
		\coordinate (30) at ($ (21) + (60:1) $) {};
		\coordinate (38) at ($ (30) + (8,0) $) {};
		
		\coordinate (39) at ($ (30) + (60:1) $) {};
		\coordinate (46) at ($ (39) + (7,0) $) {};
		
		\coordinate (47) at ($ (39) + (60:1) $) {};
		\coordinate (54) at ($ (47) + (7,0) $) {};
		
		\coordinate (55) at ($ (47) + (60:1) $) {};
		\coordinate (56) at ($ (55) + (1,0) $) {};
		\coordinate (57) at ($ (56) + (1,0) $) {};
		\coordinate (58) at ($ (57) + (1,0) $) {};
		\coordinate (59) at ($ (58) + (1,0) $) {};
		\coordinate (60) at ($ (59) + (1,0) $) {};
		\coordinate (61) at ($ (60) + (1,0) $) {};
		
		% Shape
		
		\draw (1) -- (2) -- (12) -- (3) -- (4) -- (14) -- (5) -- (6);
		\draw [dotted] (6) -- (7);
		\draw (7) -- (8) -- (18) -- (9) -- (10) -- (20) -- (29) -- (38) -- (46) -- (54) -- (61) -- (55) -- (1);
		
		% Gelfand-Tsetlin pattern
		
		\node at ($ (2) + (.5,-.5) $) {$d_1$};
		\node at ($ (4) + (.5,-.5) $) {$d_2$};
		\node at ($ (8) + (.5,-.5) $) {$d_{\frac{s}{2}}$};
		
		% Tiling
		% northeast direction
		
		\draw (12) -- (56);
		\draw (3) -- (57);
		\draw (14) -- (58);
		\draw (5) -- (59);
		\draw (6) -- (60);
		\draw (7) -- (61);
		\draw (18) -- (46);
		\draw (9) -- (29);
		
		% northwest direction
		
		\draw (2) -- (11);
		\draw (12) -- (21);
		\draw (4) -- (30);
		\draw (14) -- (39);
		\draw (6) -- (47);
		\draw (7) -- (55);
		\draw (8) -- (56);
		\draw (18) -- (57);
		\draw (10) -- (58);
		\draw (29) -- (59);
		\draw (46) -- (60);
		
		% horizontal direction
		
		\draw (11) --++(9,0);
		\draw (21) --++(8,0);
		\draw (30) --++(8,0);
		\draw (39) --++(7,0);
		\draw (47) --++(7,0);
		
		% Labels
		
		\node at ($ (58) + (0,1) $) {$t$};
		
		\draw [decorate,decoration={brace,amplitude=5pt}] (3,5.446) -- (9,5.446);
		
		\node at ($ (1.25,2.598) + (-1,0.25) $) {$s$};
		
		\draw [decorate,decoration={brace,amplitude=5pt},rotate around={60:(1.25,2.598)},=60] (-1.75,2.598) -- (4.25,2.598);
		
		\node at ($ (4.5,-0.866) + (0,-1) $) {$t+\frac{s}{2}$};
		
		\draw [decorate,decoration={brace,amplitude=5pt},rotate around={180:(4.5,-0.866)},=60] (0,-0.866) -- (9,-0.866);
		\end{tikzpicture}
		\caption{The regions $H_{s,t} (d_1, d_2,\dots,d_{\lceil s/2 \rceil})$ for $s$ odd (left) and even (right)}
		\label{figure:QuarteredHexagons}
\end{figure}

Halved Gelfand-Tsetlin patterns with $n$ rows, bottom row~$(k_1, k_2,\dots,k_{\lceil n/2 \rceil})$ and no entry larger than $b$ correspond to lozenge tilings of the region $H_{n,b-k_1} (1 , k_2+2-k_1 , \dots , k_{\lceil n/2 \rceil}+\lceil n/2 \rceil-k_1)$: Divide a given halved Gelfand-Tsetlin pattern with $n$ rows, bottom row~$\left( k_1, \dots, k_{\lceil n/2 \rceil} \right)$ and no entry larger than $b$ into $\nearrow$-diagonals as before and number them from left to right by $1$ to $\lceil n/2 \rceil$. For each $1 \le i \le \lceil n/2 \rceil$, add $i-k_1$ to the entries of the $i^{\text{th}}$ diagonal. Thus, we ensure that the leftmost entry in the bottom row is transformed into $1$. The entries in the Gelfand-Tsetlin pattern determine the positions of the tiles~$\uprightlozenge[.15]$. The remaining tiles are forced by these initial lozenges. Figure~\ref{figure:LozengeTiling} illustrates an example.

\begin{figure}[htb]
	\centering
		\raisebox{1ex}{\begin{tikzpicture}[scale=.5]
			\hmt{{1,2,2},{1,2,3},{1,3},{1,3},{2},{2}}
			\node at (9,2.5){$\longleftrightarrow$};
			\end{tikzpicture}
			\begin{tikzpicture}[scale=.5]
			\hmt{{1,3,4},{1,3,5},{1,4},{1,4},{2},{2}}
			\draw (0.5,-0.5) -- (5.5,4.5);
			\draw (2.5,-0.5) -- (5.5,2.5);
			\node at (10,2.5){$\longleftrightarrow$};
			\end{tikzpicture}}
		\begin{tikzpicture}[scale=.5]
		
		% Coordinates
		
		\coordinate (1) at (0,0) {};
		\coordinate (2) at ($ (1) + (1,0) $) {};
		\coordinate (3) at ($ (2) + (1,0) $) {};
		\coordinate (4) at ($ (3) + (1,0) $) {};
		\coordinate (5) at ($ (4) + (1,0) $) {};
		\coordinate (6) at ($ (5) + (1,0) $) {};
		
		\coordinate (7) at ($ (1) + (60:1) $) {};
		\coordinate (8) at ($ (7) + (1,0) $) {};
		\coordinate (9) at ($ (8) + (1,0) $) {};
		\coordinate (10) at ($ (9) + (1,0) $) {};
		\coordinate (11) at ($ (10) + (1,0) $) {};
		\coordinate (12) at ($ (11) + (1,0) $) {};
		
		\coordinate (13) at ($ (7) + (60:1) $) {};
		\coordinate (14) at ($ (13) + (1,0) $) {};
		\coordinate (15) at ($ (14) + (1,0) $) {};
		\coordinate (16) at ($ (15) + (1,0) $) {};
		\coordinate (17) at ($ (16) + (1,0) $) {};
		
		\coordinate (18) at ($ (13) + (60:1) $) {};
		\coordinate (19) at ($ (18) + (1,0) $) {};
		\coordinate (20) at ($ (19) + (1,0) $) {};
		\coordinate (21) at ($ (20) + (1,0) $) {};
		\coordinate (22) at ($ (21) + (1,0) $) {};
		
		\coordinate (23) at ($ (18) + (60:1) $) {};
		\coordinate (24) at ($ (23) + (1,0) $) {};
		\coordinate (25) at ($ (24) + (1,0) $) {};
		\coordinate (26) at ($ (25) + (1,0) $) {};
		
		\coordinate (27) at ($ (23) + (60:1) $) {};
		\coordinate (28) at ($ (27) + (1,0) $) {};
		\coordinate (29) at ($ (28) + (1,0) $) {};
		\coordinate (30) at ($ (29) + (1,0) $) {};
		
		\coordinate (31) at ($ (27) + (60:1) $) {};
		\coordinate (32) at ($ (31) + (1,0) $) {};
		\coordinate (33) at ($ (32) + (1,0) $) {};
		
		% Shape
		
		\draw (2) -- (3) -- (9) -- (4) -- (10) -- (5) -- (6) -- (12) -- (17) -- (22) -- (26) -- (30) -- (33) -- (31) -- (7) -- (2);
		
		% Gelfand-Tsetlin pattern
		
		\node at ($ (1) + (.5,0) $) {$1$};
		\node at ($ (3) + (.5,0) $) {$3$};
		\node at ($ (4) + (.5,0) $) {$4$};
		
		\node at ($ (7) + (.5,0) $) {$1$};
		\node at ($ (9) + (.5,0) $) {$3$};
		\node at ($ (11) + (.5,0) $) {$5$};
		
		\node at ($ (13) + (.5,0) $) {$1$};
		\node at ($ (16) + (.5,0) $) {$4$};
		
		\node at ($ (18) + (.5,0) $) {$1$};
		\node at ($ (21) + (.5,0) $) {$4$};
		
		\node at ($ (24) + (.5,0) $) {$2$};
		
		\node at ($ (28) + (.5,0) $) {$2$};
		
		% Tiling
		% northeast direction
		
		\draw (2) -- (19);
		\draw (24) -- (32);
		
		\draw (9) -- (29);
		
		\draw (16) -- (26);
		
		\draw (11) -- (17);
		
		% northwest direction
		
		\draw (8) -- (13);
		
		\draw (14) -- (18);
		
		\draw (10) -- (15);
		\draw (19) -- (23);
		
		\draw (6) -- (16);
		\draw (20) -- (24);
		
		\draw (17) -- (21);
		\draw (25) -- (28);
		
		\draw (29) -- (32);
		
		% horizontal direction
		
		\draw (8) --++(1,0);
		\draw (10) --++(1,0);
		
		\draw (14) --++(2,0);
		
		\draw (19) --++(2,0);
		
		\draw (23) --++(1,0);
		\draw (25) --++(1,0);
		
		\draw (27) --++(1,0);
		\draw (29) --++(1,0);
		\end{tikzpicture}
		\caption{Transforming a halved Gelfand-Tsetlin pattern with no entry larger than $3$ into a lozenge tiling of quartered hexagon}
		\label{figure:LozengeTiling}
\end{figure}

The lozenge tilings of the region $H_{n,b-k_1} (1 , k_2+2-k_1 , \dots , k_{\lceil n/2 \rceil}+\lceil n/2 \rceil-k_1)$ are enumerated in \cite[Theorem 3.1]{Lai14} with the numbers of these tilings being given by \eqref{eq:HMTDetEven} and \eqref{eq:HMTDetOdd}.

Regarding the interpretation in terms of representation theory, we see that halved Gelfand-Tsetlin patterns are in bijective correspondence with \emph{symplectic patterns} as defined by Proctor \cite{Pro94}: Given a halved Gelfand-Tsetlin pattern of order~$n$, bottom row~$(k_1,\dots,k_{\lceil n/2 \rceil})$ and no entry larger than $b$, replace every entry~$x$ by $b-x$ and flip the object upside down to transform it into an $n$-symplectic pattern with the partition~$(b-k_1,\dots,b-k_{\lceil n/2 \rceil})$ as top row.

Denote by $R_i$ the sum of all entries in row~$i$ -- counted from bottom to top -- of a given $n$-symplectic pattern $P$, where $R_0 \coloneqq 0$.

First, let $n$ be even. The weight of an $n$-symplectic pattern $P$ is defined as 
\begin{equation*}
	w_{\text{even}}(P)=\prod_{i=1}^{\frac{n}{2}} x_i^{R_{2i}-2R_{2i-1}+R_{2i-2}}.
\end{equation*}
Proctor showed \cite[Theorem~4.2]{Pro94} that the generating function of all $n$-symplectic patterns with top row~$\lambda$ with respect to the weight function~$w_{\text{even}}$ is given by the \emph{symplectic character}~$sp_{\lambda} (x_1,\dots,x_{n/2})$, also known as a \emph{symplectic Schur function}. It can be expressed in terms of complete homogeneous symmetric functions~$h_m (x_1,\dots,x_{n/2},x_1^{-1},\dots,x_{n/2}^{-1})$ by the following Jacobi-Trudi type formula
\begin{multline*}
sp_{\lambda} \left(x_1,\dots,x_{\frac{n}{2}}\right)\\
= \frac{1}{2} \det_{1 \le i,j \le \frac{n}{2}} \left( h_{\lambda_i -i+j} \left(x_1,\dots,x_{\frac{n}{2}},x_1^{-1},\dots,x_{\frac{n}{2}}^{-1}\right) + h_{\lambda_i -i-j+2} \left(x_1,\dots,x_{\frac{n}{2}},x_1^{-1},\dots,x_{\frac{n}{2}}^{-1}\right) \right).
\end{multline*}

Consequently, the number of all halved Gelfand-Tsetlin patterns of even order~$n$, bottom row~ $(k_1,\dots,\allowbreak k_{n/2})$ and no entry larger than $b$ is given by
\begin{multline*}
	sp_{(b-k_1,\dots,b-k_{\frac{n}{2}})} (1,\dots,1)\\
	= \prod_{1\leq i<j\leq \frac{n}{2}} \frac{(k_j-k_i+j-i)(2b+n+2-k_i-k_j-i-j)}{(j-i)(i+j)} \prod_{i=1}^{\frac{n}{2}}\frac{b+\frac{n}{2}+1-k_i-i}{i},
\end{multline*}
which follows from \cite[Exercise 24.20]{FH91}.

The classical symplectic group is only defined on even dimensional vector spaces. However, Proctor defines symplectic groups on vector spaces of odd dimension~$n$ and proves \cite[Proposition 3.1]{Pro88} that the indecomposable trace free tensor character is given by
\begin{multline*}
sp_{\lambda} \left(x_1,\dots,x_{\frac{n+1}{2}}\right)\\
= \frac{1}{2} \det_{1 \le i,j \le \frac{n+1}{2}} \left( h_{\lambda_i -i+j} \left(x_1,\dots,x_{\frac{n+1}{2}},x_1^{-1},\dots,x_{\frac{n-1}{2}}^{-1}\right) + h_{\lambda_i -i-j+2} \left(x_1,\dots,x_{\frac{n+1}{2}},x_1^{-1},\dots,x_{\frac{n-1}{2}}^{-1}\right) \right).
\end{multline*}

As in the previous case, we give a combinatorial interpretation in terms of symplectic patterns. Let $n$ be odd and define the weight of an $n$-symplectic pattern~$P$ as 
\begin{equation*}
	w_{\text{odd}}(P)=x_{\frac{n+1}{2}}^{R_n-R_{n-1}} \prod_{i=1}^{\frac{n-1}{2}} x_i^{R_{2i}-2R_{2i-1}+R_{2i-2}}.
\end{equation*}
It holds true \cite[Theorem~4.2]{Pro94} that the generating function of all $n$-symplectic patterns with top row~$\lambda$ with respect to the weight function~$w_{\text{odd}}$ is given by the symplectic character $sp_{\lambda} (x_1,\dots,x_{(n+1)/2})$. This implies
\begin{equation*}
	sp_{(b-k_1,\dots,b-k_{\frac{n+1}{2}})} (1,\dots,1)
	= \prod_{1\leq i<j\leq \frac{n+1}{2}}\frac{(k_j-k_i+j-i)(2b+n+2-k_j-k_i-j-i)}{(j-i)(j+i-1)}.
\end{equation*}

\bibliography{HoengesbergHMT}
\bibliographystyle{alpha}

\end{document}